\numberwithin{equation}{section}
\newtheorem{lemA}{Lemma}
\newtheorem{thm}{Theorem}[section]
\def\Xint#1{\mathchoice
	{\XXint\displaystyle\textstyle{#1}}%
	{\XXint\textstyle\scriptstyle{#1}}%
	{\XXint\scriptstyle\scriptscriptstyle{#1}}%
	{\XXint\scriptscriptstyle\scriptscriptstyle{#1}}%
	\!\int}
\def\XXint#1#2#3{{\setbox0=\hbox{$#1{#2#3}{\int}$ }
		\vcenter{\hbox{$#2#3$ }}\kern-.6\wd0}}
\def\dashint{\Xint-}
\newtheorem{lem}[thm]{Lemma}
\newtheorem{cor}[thm]{Corollary}
\newtheorem{prop}[thm]{Proposition}
\newtheorem{rem}[thm]{Remark}
\begin{document}
\title[4-Order Elliptic Equations]{Classification and a priori estimates for the singular prescribing $Q$-curvature equation on 4-manifold}

\author{Mohameden Ahmedou}

\author{Lina Wu}

\author{Lei Zhang}\footnote{Lei Zhang is partially supported by a Simons Foundation Collaboration Grant (Award number: 584918)}

\address{ Mohameden Ahmedou\\
Department of Mathematics \\
	Giessen University \\
	Arndtstrasse 2, 35392, Giessen, Germany }
\email{Mohameden.Ahmedou@math.uni-giessen.de}

\address{  Lina Wu\\
Department of Mathematics \\
	Beijing Jiaotong University \\
	Beijing, 100044, China }
\email{lnwu@bjtu.edu.cn}

\address{ Lei Zhang\\
Department of Mathematics\\
        University of Florida\\
        1400 Stadium Rd\\
        Gainesville FL 32611}
\email{leizhang@ufl.edu}

\date{\today}

\begin{abstract}
	On  $(M,g)$  a compact riemannian  $4-$manifold  we consider the prescribed  $Q-$curvature equation defined on $M$ with finite singular sources. We first prove a classification theorem for singular Liouville equations defined on $\mathbb R^4$ and perform a concentration compactness analysis. Then  we derive  a quantization result for bubbling solutions and  establish a priori estimate under the assumption that cetain conformal invariant does not take some quantized values. Furthermore we prove a spherical Harnack inequality around   singular sources  provided their   strength is  not an  integer. Such an inequality implies that in this case singular sources are \emph{isolated simple blow up points}.
\end{abstract}

 \maketitle

\begin{center}
\bigskip
\noindent{\it Key Words:}  Blow-up analysis, Conical metrics, Moving plane method, Spherical Harnack inequality.

\bigskip
\centerline{{\it  AMS subject classification:}  53C21, 35C60, 58J60.}

\end{center}
\tableofcontents

\section{Introduction}

\smallskip

The question of  existence of conformal metrics of constant or more generally prescribed curvature on riemannian manifolds is  a recurrent problem   in differential geometry and geometric analysis. Indeed  a  positive or  a negative answer to such a question has   far reaching consequences on the geometry and topology of the underlying manifold.
The Poincar\'e uniformization's theorem on closed surfaces, the Yamabe problem on riemannian manifolds of dimension $n \geq 3$ and the Nirenberg's problem on standard spheres $\mathbb{S}^n$, just to name a few,  are well known and well   studied mathematical problems. \\
A similar question, which  goes back to Picard \cite{Picard1, Picard2} deals with the existence of conformal  metrics  of constant or prescribed curvature on surfaces with conical singularities. After  the pioneering work of Picard at the beginning of the last century, such a problem has  been systematically investigated by  Berger \cite{Berger}, McOwen\cite{McOwen, McOwen2} and Troyanov\cite{T,T2}. More recently Bartolucci-deMarchis-Malchiodi\cite{BdM} used a Morse theoretical approach to prove further existence and multiplicity results.\\
In this paper, a first part of a series of papers, we address the problem of existence of  conformal  conical  metrics  of  constant, or more generally prescribed  $Q-$curvature on four dimensional riemannian manifolds. In the following we will explain in some details the geometric context of such a problem: \\
Given  $(M,g)$  a compact four-dimensional Riemannian manifold, the  $Q$-curvature and the Paneitz operator are defined respectively, by
\begin{equation}\label{Q-curvature}
Q_g=-\frac{1}{12}\big(\Delta_g R_g-R_g^2+3|\rm{ Ric}_g|^2\big),
\end{equation}
\begin{equation}\label{Peneitz}
P_g \varphi=\Delta_g^2\varphi+{\rm div}_g\Big(\big(\frac{2}{3}R_g g-2{\rm Ric}_g\big)\nabla u\Big),
\end{equation}
where ${\rm Ric}_g$ is the Ricci tensor and $R_g$ is the scalar curvature of $(M,g)$. \\
Similar to second order equations a natural question is the following uniformization statement: \textit{given a four-dimensional Riemannian manifold $(M,g)$, is there a metric $\tilde{g}=e^{2u}g$ in the conformal class of $g$ with constant $Q$-curvature?}

Under the conformal change of metric above, the Paneitz operator is conformally covariant:
\begin{equation}\label{conformal-P}
P_{\tilde{g}}\varphi=e^{-4u}P_g\varphi,
\end{equation}
and the $Q$ curvature of $\tilde{g}$ is given by
\begin{equation}\label{conformal-Q}
P_gu+2Q_g=2Q_{\tilde{g}}e^{4u}.
\end{equation}

From (\ref{conformal-P}) and (\ref{conformal-Q}), the question above is equivalent to the existence of solution to this fourth order equation:
\begin{equation}\label{Q-equation-reg}
P_gu+2Q_g=2\bar{Q}e^{4u},
\end{equation}
where $\bar{Q}$ is a real constant.\\
Integrating with respect to the volume element ${\rm d}V_g$, we can see that
\begin{equation*}
\kappa_P=\int_{M}Q_g{\rm d}V_g
\end{equation*}
is a constant in the conformal class of $g$, here we also point out the Gauss-Bonnet-Chern  formula that links the local curvature to the global topology of $M$ is:
\begin{equation*}
\int_{M}\Big(Q_g+\frac{1}{8}|W_g|^2\Big){\rm d}V_g=4\pi^2\chi_M,
\end{equation*}
where $W_g$ denotes the Weyl's tensor of $(M,g)$ and $\chi_M$ is the Euler characteristic of $M$. From this equality  and the aforementioned conformal covariance  property it is not hard to imagine that $P_g$ and $Q_g$ are related to a number of studies such as Moser-Trudinger type inequatilities, log-determinant formulas and the compactification of locally flat manifolds, see \cite{Beckner,Branson-Chang-Yang,Branson-Oersted,Chang-Qing-Yang-Invent,Chang-Qing-Yang-Duke,Chang-Yang}. In many of these studies the kernel of $P_g$ is usually assumed to consist only of constants:
\begin{equation*}\label{P-assumption}
{\rm Ker}\,(P_g)=\{constants\}. \leqno(P)
\end{equation*}

In this paper consider the following   prescribed $Q$-curvature equation involving singular sources
\begin{equation}\label{Q-singular}
P_gu+2Q_g=2h e^{4u}-8\pi^2\sum_{j=1}^{N}\gamma_j\Big(\delta_{q_j}-\frac{1}{{\rm vol}_g(M)}\Big),
\end{equation}
where  $h$ is a smooth positive function, $N\in \mathbb N$ is a positive integer, $q_1,\cdots,q_N$ are distinct points on $M$ and where Dirac measures $\delta_{q_j}$ are located, $\gamma_j>-1$ are constants.\\
Solutions to \eqref{Q-singular} have the following geometric interpretation: Setting  $\tilde{g}:= e^{2u} g$ we obtain a metric conformal to $g$ on $M \setminus \{q_1, \cdots,q_N  \}$ which has a conical singularity at each $q_i$. One says that $\tilde{g}$ is represented by the divisor $D:= \sum_{i=1}^{N} \gamma_i q_i$. See Fang-Ma \cite{Fang-Ma}. Furthermore due to Gauss-Bonnet-Chern formula for conic four manifolds, see \cite{Chang-Qing-Yang-Duke}, \cite{Chang-Qing-Yang-Invent},\cite{BN19},   we have  that
$$
\tilde{\kappa}_P \, := \int_{M}Q_{\tilde{g}}{\rm d}V_{\tilde{g}} \, =  \, \int_{M}Q_g{\rm d}V_g \, + \, 8 \pi^2 \sum_{i=1}^{N} \gamma_i
$$
is a conformal invariant.\\
Considerable progress has been made for the regular case of (\ref{Q-singular}), that is $N=0$ in (\ref{Q-singular}). Under the assumption that the Kernel of the Paneitz operator contains only constants, Chang-Yang \cite{Chang-Yang} proved  existence for $\kappa_P<8\pi^2$, Djadli-Malchiodi \cite{Djadli-Malchiodi-2008} settled the case that $\kappa_P\neq 8\pi^2n$ for any $n\in \mathbb N$. Li-Li-Liu \cite{Li-Li-Liu} gave a necessary condition for existence in the case $\kappa_P = 8 \pi^2$, Ahmedou-Ndiaye\cite{Ahmedou-Ndiaye} developed a Morse theory  the case of $\kappa_P=8\pi^2n$ and Ndiaye \cite{Ndiaye-2} combined  the celebrated topological argument of Bahri-Coron \cite{Bahri-Coron} with the \emph{critical point theory at infinity } in \cite{Ahmedou-Ndiaye} to derive  some existence results.  We point out that  an  essential estimate related to the proof in \cite{Djadli-Malchiodi-2008} is a priori estimate when $\kappa_P$ is away from $8\pi^2 \mathbb N$ proved by Malchiodi in \cite{Malchiodi}.
 Later in \cite{Druet-Robert}, Druet and Robert extended such an a priori estimate to the following more general equation in the same class:
\begin{equation}\label{Q-equ-gen-reg}
P_g u+2b=2he^{4u},
\end{equation}
where $b$ is a smooth function. If $b=Q_g$ is the $Q$-curvature of the conformal metric $e^{2u}g$. More specially, assuming $h_k\to h_0$, $h_k\geq c_0>0$ and $b_k \to b_0$, then any sequence of solutions $\{u_k\}$ of (\ref{Q-equ-gen-reg}) with $h=h_k$ and $b=b_k$ is uniformly bounded under the condition $\int_{M}b_0{\rm d}V_g\neq8\pi^2n$, see also Malchiodi \cite{Malchiodi}.

However, bubbling can occur when $\int_{M}b_0{\rm d}V_g=8\pi^2n$ for some positive integer $n$. The understanding of this bubbling phnomenon is vital for the existence problem. The study of the blow-up profile and other blow-up phenomena for the Paneitz operator and other elliptic equations has attracted much interest recently and the reference is too numerous to be mentioned, we just list a few closely related to our article in our humble opinion:  \cite{Struwe-Robert, Brendle,Chang-Qing-Yang-3,Djadli-Malchiodi-2005,Fefferman,Gursky-Viaclovsky, hyder, Li-Li-Liu,Malchiodi-2,Malchiodi-Struwe,Ndiaye,Qing-Raske,Wei-1996,Wei-Xu,zhang-weinstein}. Particularly, in \cite{zhang-weinstein}, the third  named  author and Weinstein have obtained sharp estimates on the difference near the blow-up points between a bubbling sequence of solutions to (\ref{Q-equ-gen-reg}) with $h=h_k$ and $b=b_k$ and standard bubbles, and obtained the vanishing rate under the assumption that $(M,g)$ may not be locally conformally flat.

\medskip

When taking the singularities into the account as in (\ref{Q-singular}), we consider the following more general singular equation:
\begin{equation}\label{Q-equ-gen-singular}
P_gu+2b=2he^{4u}-8\pi^2\sum_{j=1}^{N}\gamma_j\Big(\delta_{q_j}-\frac{1}{vol_g(M)}\Big),
\end{equation}
where $h$ is a positive smooth function on $M$ and $b\in C^1(M)$.
Before stating our first main result, we define a {\itshape critical set} $\Gamma$ as follows:
\begin{equation*}
\Gamma=\Big\{16\pi^2n+16\pi^2\sum_{j\in J}(1+\gamma_j):\ \,n\in\mathbb{N}\cup\{0\}\ \,{\rm and}\ \,J\subset\{1,\cdots,N\}\Big\}.
\end{equation*}

In order to obtain the a priori estimates and existence results, we mainly study the blow-up phenomena for (\ref{Q-equ-gen-singular}). Let us consider the following equations:
\begin{equation}\label{Q-equation-blowup}
P_gu_k+2b_k=2h_ke^{4u_k}-8\pi^2\sum_{j=1}^{N}\gamma_j\Big(\delta_{q_j}-\frac{1}{{\rm vol}_g(M)}\Big)\quad {\rm in }\ \, M,
\end{equation}
with normalized total integration:
\begin{equation}\label{volume-normal}
	\int_M e^{4u_k}{\rm d}V_g=1.
\end{equation}

Let $\{u_k\}$ be a sequence of solutions of (\ref{Q-equation-blowup}). We say $p$ is a blowup point of $u_k$ if there exists a sequence $p_k\to p$ such that
$$u_k(p_k)+8\pi^2\sum_{j=1}^N\gamma_jG(p_k,q_j)\to \infty. $$ $u_k$ is called a sequence of blowup solutions if it has a blowup point. Here $G(x,p)$ is the Green's function of $P_g$ defined in (\ref{Green-func-expression}).
 For blowup solutions we assume that coefficient functions are regular enough to have limits:
\begin{equation}\label{assumption-coe}
\parallel b_k-b_0\parallel_{C^1(M)}\to 0,\quad \parallel h_k-h_0\parallel_{C^1(M)}\to 0,\quad 0<c_0<h_0<1/c_0.
\end{equation}
 Without loss of generality, we assume the integration of $h_ke^{u_k}$ is $1$:

Our first main result asserts that a priori estimate holds for $u_k$, as long as $2\int_Mb_k$ does not tend to the following critical set:

\begin{equation*}
\Gamma=\Big\{16\pi^2n+16\pi^2\sum_{j\in J}(1+\gamma_j):\ \,n\in\mathbb{N}\cup\{0\}\ \,{\rm and}\ \,J\subset\{1,\cdots,N\}\Big\}.
\end{equation*}

\begin{thm}[A Priori Estimate]\label{thm-apriori-est}
	
	Suppose (P) holds, $b$ and $h$ satisfy (\ref{assumption-coe}). If $\{u_k\}$ is a sequence of solutions of (\ref{Q-equation-blowup}) under restriction (\ref{volume-normal}) and $\int_M2b_0{\rm d}V_g\in\mathbb{R}^+\setminus \Gamma$,
	 $$\Big|u_k(x)+8\pi^2\sum_{j=1}^N\gamma_j G(x,q_j) \Big|\le C,\quad \forall x\in M $$
for some $C>0$ independent of $k$.
	\end{thm}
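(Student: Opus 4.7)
The plan is to argue by contradiction. Assume the conclusion fails, so that after passing to a subsequence the regularized sequence
$$ v_k(x) \, := \, u_k(x) + 8\pi^2 \sum_{j=1}^{N} \gamma_j \, G(x, q_j) $$
is not uniformly bounded. By construction $v_k$ satisfies a fourth order equation on $M$ with a conical weight $|x-q_j|^{4\gamma_j}$ near each $q_j$ (the Green's function exactly cancels the distributional sources in (\ref{Q-equation-blowup})), so the concentration--compactness analysis already developed in the paper applies to $v_k$. It produces a nonempty finite blow-up set
$$ S \, := \, \{ p \in M : \exists\, p_k \to p \text{ with } v_k(p_k) \to +\infty \}, $$
and the usual alternative away from $S$: either $v_k$ is locally bounded, or $v_k \to -\infty$ locally while $h_k e^{4u_k}\, dV_g$ concentrates on $S$. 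The normalization $\int_M e^{4u_k}\, dV_g = 1$ together with assumption (P) controls the global mass from above and the Green's representation rules out escape of regularity to infinity outside $S$.

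\textbf{Local bubble analysis and quantization.} For each $p \in S$, rescale $u_k$ around a maximizing sequence $p_k \to p$ at the scale dictated by the blow-up rate. The classification theorems proved earlier in the paper identify the rescaled limit on $\mathbb{R}^4$: a standard spherical Liouville bubble of local mass $16\pi^2$ with respect to $2 h_0(p) e^{4 \cdot}$ at a regular point $p \notin \{q_1,\dots,q_N\}$, and a singular Liouville bubble of local mass $16\pi^2(1+\gamma_j)$ at a conical center $p = q_j$. The quantization result for bubbling sequences, also established earlier, then upgrades these local statements: at a regular point the total local contribution is an integer multiple of $16\pi^2$, while at a conical center $q_j \in S$ the spherical Harnack inequality for non-integer $\gamma_j$ forces isolated simple blow-up there with local mass exactly $16\pi^2(1+\gamma_j)$. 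When $\gamma_j \in \mathbb{N}$, any clustering of bubbles at $q_j$ still contributes a mass in $16\pi^2 \mathbb{N}$ and is therefore absorbed into the regular-point quantization.

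\textbf{Global integral identity and conclusion.} Integrate (\ref{Q-equation-blowup}) over $M$. Since (P) and the self-adjointness of $P_g$ imply $\int_M P_g u_k \, dV_g = 0$, and since each $\delta_{q_j} - 1/\operatorname{vol}_g(M)$ has zero mean, we obtain
\begin{equation*}
2 \int_M b_k \, dV_g \, = \, 2 \int_M h_k e^{4 u_k} \, dV_g.
\end{equation*}
The right-hand side, by the weak convergence $2 h_k e^{4u_k}\, dV_g \rightharpoonup \sum_{p \in S} m_p \delta_p$ with the quantized masses above, tends to
$$ 16\pi^2 n + 16\pi^2 \sum_{j \in J}(1+\gamma_j) \, \in \, \Gamma, $$
for some $n \in \mathbb{N} \cup \{0\}$ and $J \subset \{1,\dots,N\}$ with $n + |J| \geq 1$ (because $S \ne \emptyset$). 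But the left-hand side tends to $2 \int_M b_0 \, dV_g \in \mathbb{R}^+ \setminus \Gamma$, a contradiction. Therefore $v_k$ must be uniformly bounded.

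\textbf{Main obstacle.} The heart of the matter is the local mass quantization underlying the step above. At regular points it requires ruling out bubble accumulations carrying non-quantized total mass, via a bubble--tree selection procedure combined with a Pohozaev identity for the Paneitz operator that annihilates the neck--region energy (in the spirit of \cite{zhang-weinstein}). At conical centers the analysis is considerably more delicate: one must peel off the prescribed factor $|x-q_j|^{4\gamma_j}$ in order to isolate any ``fresh'' bubble concentrating on top of the background singularity, and show that the remaining profile is exactly a solution of the singular Liouville equation classified earlier. The spherical Harnack inequality at non-integer $\gamma_j$ proved in the paper is precisely what makes this separation clean, forcing isolated simple blow-up and the clean value $16\pi^2(1+\gamma_j)$; the integer case is handled by the general concentration--compactness framework, which suffices because the resulting local mass automatically lies in $16\pi^2 \mathbb{N}$.
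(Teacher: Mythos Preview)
Your approach is essentially the paper's: decompose $u_k$ into its singular part and the regular part $\tilde u_k$ (your $v_k$), invoke the concentration--compactness alternative (Theorem~\ref{thm-concentration-compactness}), read off the quantized local masses, and contradict $2\int_M b_0\,dV_g\notin\Gamma$ via the global integral identity. One correction on the logical order: you invoke the spherical Harnack inequality to force the quantized value $16\pi^2(1+\gamma_j)$ at a singular blow-up point, but in the paper that inequality (Theorem~\ref{SHT}) is a \emph{consequence} of the concentration--compactness theorem, not an input to it. The quantization $\beta_j=16\pi^2(1+\gamma_j)$ at $q_j$ is obtained directly in Theorem~\ref{thm-con-quan} via the Pohozaev identity (\ref{PI-mfd}), and it holds regardless of whether $\gamma_j$ is an integer; no separate treatment of the integer case is needed for Theorem~\ref{thm-apriori-est}.
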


In particular, the a priori estimate holds for the singular prescribing $Q$-curvature equations. Indeed Theorem \ref{thm-apriori-est} is an extension of previous results of Malchiodi \cite{Malchiodi}, Druet-Robert \cite{Druet-Robert} and Fardoun-Regbaoui \cite{fard} for the regular  prescribed $Q$-curvature equation.  We point out that the argument in the regular case uses in a crucial way the explicit form of the bubble, while our argument uses only the asymptotic behavior of the bubble  and  is based on a Pohozaev identity for equations under conformal normal coordinates (see \cite{zhang-weinstein}).

If $\int_{M}2b_0dV_g\in \Gamma$ and $u_k$ does blowup around a singular source $q\in M$, our next result says that if $\gamma_q$ is not a positive integer, the Spherical Harnack inequality holds for $u_k$ around $q$:

\begin{thm}\label{sphe-har-uk} Suppose $\{u_k\}$ be a sequence of solutions of (\ref{Q-equation-blowup}) that also satisfies (\ref{volume-normal}),(P) holds and $b$, $h$ satisfy (\ref{assumption-coe}). If $q_j$ is a blowup point of $u_k$ and $\gamma_{q_j}\not \in \mathbb N$, there exist $C, \delta>0$ independent of $k$ such that
$$\max_{B(q_j,\delta)}(u_k(x)+\log |x-q_j|)\le C, \quad |x-q_j|\le \delta . $$
\end{thm}

Establishing Spherical Harnack inequality for bubbling solutions is critical for applications like a priori estimate, degree counting program and existence results. The readers may look at recent breakthroughs of the third author and Wei \cite{wei-zhang-adv} for Liouville equations.

One indispensable part of the blowup analysis for the Q curvature equation is the classification of global solutions on $\mathbb R^4$. For this purpose we consider
the limiting equation used to describe the profile of bubbling solutions:
\begin{equation*}
    \Delta^2 \tilde u=6e^{4 \tilde u}-8\pi^2\gamma \delta_0, \quad \int_{\mathbb R^4} e^{4\tilde u}<\infty, 
\end{equation*} 
where $\gamma>-1$ is a constant and the equation is defined on $\mathbb R^4$. Using 
$$ \Delta^2(\frac{1}{8\pi^2}\log \frac{1}{|x|})=\delta_0, \quad 
\mbox{and} \quad u(x)=\tilde u(x)-\gamma \log |x|, $$
we see that the equation for $\tilde u$ is equivalent to
\begin{equation}\label{equ-liou-2}
\Delta^2 u(x)=6|x|^{4\gamma}e^{4u(x)}, \quad  {\rm in} \ \; \mathbb{R}^4, \qquad
|x|^{4\gamma}e^{4u(x)}\in L^1(\mathbb{R}^4).
\end{equation}
Clearly if $u$ is a solution of (\ref{equ-liou-2}), so is $u_{\lambda}$ defined by
\begin{equation}\label{rescale}
u_{\lambda}(x)=u(\lambda x)+(1+\gamma)\log \lambda
\end{equation}
for any given $\lambda>0$. Our next main result is

\begin{thm}\label{thm-classification}
	Suppose that $u$ is a solution of (\ref{equ-liou-2}) with $\gamma>-1$ and $|u(x)|=o(|x|^2)$ at infinity. Then
	\begin{itemize}
		\item [(i)]
		$\int_{\mathbb{R}^4}6|y|^{4\gamma} e^{4u(y)}{\rm d}y=16\pi^2(1+\gamma)$.
		\item [(ii)]
		$
		u(x)=\frac{3}{4\pi^2}\int_{\mathbb{R}^4}\log\big(\frac{|y|}{|x-y|}\big)|y|^{4\gamma}e^{4u(y)}{\rm d}y+C_0
		$
for some $C_0\in \mathbb R$,
		\item [(iii)]
		Furthermore, if $-1<\gamma<0$, $u$ is radially symmetric about the origin and is unique up to scaling in (\ref{rescale}).
	\end{itemize}
	\end{thm}
Note that $(i)$ is a quantization result and is true for all $\gamma>-1$. $(iii)$ completes the classification for all $\gamma\in (-1,0]$. $(ii)$ is also proved as a part of the main theorem in \cite{hyder}. As a corollary of the classification in $(iii)$ we now derive a more specific description of the asymptotic behavior of $u$: For simplicity we use $\mu=1+\gamma$.
\begin{cor}\label{precise-u}
	Let $u$ be a solution of (\ref{equ-liou-2}) with $\gamma\in (-1,0)$ and suppose $|u(x)|=o(|x|^2)$ at $\infty$. Then for $M=[\frac{1}{4\mu}]$, there exists $c_0\in \mathbb R$ such that
	\begin{equation}\label{u-asy-p}
		u(x)=-2\mu\log|x|+c_0+\sum_{l=1}^M\frac{c_l}{|x|^{4l\mu }}+O(\frac{1}{|x|}),\quad |x|>1
\end{equation}
where $c_l=\frac{3e^{4\sum_{s=0}^{l-1}c_s}}{32 l^2\mu^2(1-2l\mu)(1+2l\mu)}$, $l=1,..,M$;
\begin{equation}\label{u-high-asy-p}
		-\Delta u(x)=\frac{4\mu}{|x|^2}+\sum_{l=1}^M\frac{4 c_l l\mu(2-4l \mu)}{|x|^{2+4l\mu}}+O(\frac{1}{|x|^3}),, \quad |x|>1	
\end{equation}
\end{cor}
Note that $[\frac{1}{4\mu}]$ stands for the largest integer no greater than $\frac{1}{4\mu}$. If $\gamma>-\frac{3}{4}$, $M=0$, the third term in (\ref{u-high-asy-p}) does not exist in this case.
If the $o(|x|^2)$ assumption in Theorem \ref{thm-classification} is removed, it is established in \cite{hyder} by Hyder et al that
if $u$ is a solution of (\ref{equ-liou-2}) with $\gamma>-1$, after an orthogonal transformation, $u(x)$ can be represented by
	\begin{equation}\label{rep-u-2}
	u(x)=\frac{3}{4\pi^2}\int_{\mathbb{R}^4}\log\big(\frac{|y|}{|x-y|}\big)|y|^{4\gamma}e^{4u(y)}{\rm d}y-\sum_{j=1}^{4}a_j(x_j-x_j^0)^2+c_0
\end{equation}
for some $c_0\in \mathbb R$. In this case we can prove a symmetry result: If $\gamma\in (-1,0)$ and $a_ix_i^0=0$ for all $i$, $u$ is a radial function. See Theorem \ref{thm-classification-2} for more detail.

\begin{rem} Our quantization and classification results directly point to the following important open questions:
1. Theorem \ref{thm-classification} does not cover the case that $\gamma>0$, except for the quantization result. The symmetry and profile for global solutions of (\ref{equ-liou-2}) in this case are largely unknown.

2. If the sub-quadratic growth assumption ($u(x)=o(|x|^2)$) is removed, it is not clear to us how the total integration of global solutions will change. It is proved in Lin \cite{lin-classification} that for $\gamma=0$ solutions with sub-quadratic growth give the largest integration. It is also proved by Hyder et al \cite{hyder} that for $\gamma<0$ there are non-radial solutions whose integration is greater than that of radial ones. It can be easily derived from our Theorem \ref{thm-classification-2} that if $\gamma<0$ and $a_ix_i^0=0$ for all $i$, the one with the sub-quadratic growth gives the largest integration. However, no information is known for other situations.
\end{rem}

When we were about to submit this article we heard that Jevnikar-Sire-Yang \cite{yang-wen} are working on a similar project independently and their results are to be posted soon.

Here we briefly outline  the strategy of the proofs   in our paper. For the proof of the classification result for globally defined singular equation, we follow the argument of Lin \cite{lin-classification} but we need to take care of all the complications caused by the singular source. In particular the method of moving planes relies crucially on the integral form of the global solutions.  We are able to prove the complete classification for $\gamma\in (-1,0]$ and a quantization result for all $\gamma>-1$. For blowup solutions we first use a small-energy lemma (Lemma \ref{lem-small-mass-regular}) to prove that there are at most finite blowup points. Then we take advantage of a Pohozaev identity established by Weinstein-Zhang \cite{zhang-weinstein} to describe a precise asymptotic behavior of blowup solutions around a blowup point. Then the total integration as well as precise asymptotic behavior of solutions can be further determined. With this information the critical set $\Gamma$ can be identified and if the total integral of $2b$ does not tend to $\Gamma$ we obtain a priori estimate. The idea to prove Theorem \ref{sphe-har-uk} is as follows: If the spherical harnack inequality is violated,
there should be finite small bubbling circles around the singular
source all tending to the singular source. Around each tiny bubbling
disk there is a Pohozaev identity, and a ``big" ball that includes
all these tiny balls also has a Pohozaev identity. The comparison of
these Pohozaev identities implies that the strength of the singular
source has to be an integer.

The organization of this paper is as follows. In section \ref{entire} we analyze the globally defined solutions and proved the quantization  and the classification results stated in Theorem \ref{thm-classification} and Corollary \ref{precise-u}. Then in Section \ref{preliminaries}, we list some useful facts about the conformal normal coordinates and Pohozaev identity and  in section \ref{blowup-local}, we perform a blow-up analysis near singular points. Section \ref{CC-Apriori} is devoted to a  concentration-compactness theorem and a priori estimate for the singular prescribing $Q$-curvature equation on 4-manifolds. Theorem \ref{spherical-Harnack} is also established in this section. Finally we provide is the appendix an useful estimate of the difference between the geodesic distance and the Euclidean one for nearby points on the manifold.

\section[Entire solutions]{Entire solutions of fourth order singular  Liouville type equations in $\mathbb{R}^4$}\label{entire}

In this section, we will follow the argument of Lin \cite{lin-classification} to analyze solutions of (\ref{equ-liou-2}) and prove Theorem \ref{thm-classification} and Theorem \ref{thm-classification-2}.

\subsection{Asymptotic behavior of entire solutions}

\quad

Our argument is progressive in nature and we shall obtain a rough estimate of $u$ at infinity. For this purpose we set
\begin{equation}\label{v-def}
    v(x):=\frac{3}{4\pi^2}\int_{\mathbb{R}^4}\log\big(\frac{|x-y|}{|y|}\big)|y|^{4\gamma}e^{4u(y)}{\rm d}y,
\end{equation}
which is obviously a solution of
\begin{equation}\label{v-equ}
\Delta^2v(x)=-6|x|^{4\gamma}e^{4u(x)},\quad{\rm in} \ \; \mathbb{R}^4.
\end{equation}
The asymptotic behavior of $u$ has a large to do with that of $v$, so in the first lemma we derive a rough upper bound of $v$.
 For convenience we set
\begin{equation}\label{energy-R4}
\alpha=\frac{3}{4\pi^2}\int_{\mathbb R^4}|y|^{4\gamma}e^{4u}dy.
\end{equation}

\begin{lem}\label{lem-v-upper}
	Suppose $u$ is a solution of (\ref{equ-liou-2}) and let $\alpha$ be given as in (\ref{energy-R4}). Then
	\begin{equation}\label{v-upper}
		v(x)\leq\alpha\log|x|+C
	\end{equation}
	for some constant $C$.
\end{lem}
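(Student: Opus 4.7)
The plan is to estimate $v(x)$ for $|x|$ large (the bound for bounded $|x|$ is then absorbed into the constant $C$) by splitting the integration domain according to the relative size of $|y|$ and $|x|$. Concretely, I would write $\mathbb R^4 = A_1 \cup A_2 \cup A_3$ with
\[
A_1 = \{|y|\le |x|/2\}, \qquad A_2 = \{|x|/2 < |y| < 2|x|\}, \qquad A_3=\{|y|\ge 2|x|\},
\]
and call the corresponding pieces of the integral $I_1, I_2, I_3$. The goal is to show that $I_1$ contributes the main term $\alpha\log|x|$ (up to the $3/(4\pi^2)$ factor already built into $\alpha$), while $I_2$ and $I_3$ are bounded.

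For $I_3$, note that if $|y|\ge 2|x|$ then $|y|/2\le |x-y|\le 3|y|/2$, so $|\log(|x-y|/|y|)|\le \log 2$; this immediately gives $|I_3|\le C$ by the finiteness of $\int |y|^{4\gamma} e^{4u}\,dy$. For $I_2$, the triangle inequality gives $\log(|x-y|/|y|)\le \log 6$ in the subregion where $|x-y|\ge 1$, while in the subregion $\{|x-y|\le 1\}\cap A_2$ the quantity $\log(|x-y|/|y|)\le -\log(|x|/2)$ is large negative for $|x|$ large and so cannot harm an \emph{upper} bound. Hence $I_2\le C$.

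The main work is on $I_1$. There, $|x-y|\le |x|+|y|\le \tfrac{3}{2}|x|$, so
\[
\log\frac{|x-y|}{|y|}\le \log\frac{3|x|}{2|y|}=\log(3/2)+\log|x|+\log\frac{1}{|y|}.
\]
Integrating against $|y|^{4\gamma}e^{4u(y)}\,dy$ gives a contribution $\big(\log|x|+C\big)\int_{\mathbb R^4}|y|^{4\gamma}e^{4u}\,dy$ from the first two terms, yielding exactly $\frac{3}{4\pi^2}\log|x|\cdot\int|y|^{4\gamma}e^{4u}=\alpha\log|x|$ after the factor $3/(4\pi^2)$ in $v$. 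The delicate term is $\int_{A_1}\log(1/|y|)\,|y|^{4\gamma}e^{4u}\,dy$; splitting at $|y|=1$, the piece over $\{|y|\ge 1\}\cap A_1$ has integrand $\le 0$ and can be dropped for an upper bound, while the piece over $\{|y|\le 1\}$ is independent of $x$ and finite because $\gamma>-1$ implies $\log(1/|y|)|y|^{4\gamma}\in L^1_{\mathrm{loc}}$ at the origin, and $e^{4u}$ is bounded on the compact set $\{|y|\le 1\}$.

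The single nonroutine point is the near-origin integrability of $\log(1/|y|)|y|^{4\gamma}$, which is where the assumption $\gamma>-1$ enters essentially. Combining the three estimates and multiplying by $3/(4\pi^2)$ yields $v(x)\le \alpha\log|x|+C$ for $|x|$ large; enlarging $C$ to cover the remaining compact range of $|x|$ (where one checks directly that $v$ is bounded above by splitting the integral near the singularities at $y=0$ and $y=x$) gives the stated global bound.
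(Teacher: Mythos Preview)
Your proof is correct and reaches the same conclusion by a slightly different route. The paper splits $\mathbb{R}^4$ into only two regions according to $|x-y|$ rather than three according to $|y|$: on $\{|y-x|\le |x|/2\}$ one has $|y|\ge |x|/2\ge |x-y|$, so the integrand $\log(|x-y|/|y|)$ is nonpositive and that entire region is discarded at once; the complementary region $\{|y-x|>|x|/2\}$ is then handled via the trivial bounds $|x-y|\le |x|\,|y|$ when $|y|\ge 2$ and $\log|x-y|\le \log|x|+C$ when $|y|\le 2$. Your three-annulus split is equally valid but requires a little more bookkeeping in the middle shell. Both arguments ultimately rest on the same near-origin fact, namely that $\int_{\{|y|\le 1\}}\log(1/|y|)\,|y|^{4\gamma}e^{4u(y)}\,dy<\infty$; you justify this explicitly via $\gamma>-1$ and local boundedness of $e^{4u}$, whereas the paper simply absorbs it into the final constant without comment. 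One small caveat: the inequality $v(x)\le \alpha\log|x|+C$ can only hold for $|x|$ bounded away from $0$ (since $v(0)=0$ while $\alpha\log|x|\to -\infty$ as $|x|\to 0$); the paper accordingly restricts to $|x|>4$ from the outset, so your ``remaining compact range of $|x|$'' should be read as, say, $1\le |x|\le 4$ rather than all of $\mathbb{R}^4$.
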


\begin{proof}[\textbf{Proof}]
	Since the goal is to describe asymptotic behavior it is natural to assume $|x|>4$. For such $x$, we decompose $\mathbb{R}^4=A_1\cup A_2$, where
	\begin{equation*}
		A_1=\Big\{y:|y-x|\leq\frac{|x|}{2}\Big\},\quad A_2=\Big\{y:|y-x|\geq\frac{|x|}{2}\Big\}.
	\end{equation*}
	For $y\in A_1$, $\log\frac{|x-y|}{|y|}\leq 0$ because $|y|\geq|x|-|x-y|\geq\frac{|x|}{2}\geq|x-y|$, Thus
	\begin{equation*}
		\int_{A_1}\log\big(\frac{|x-y|}{|y|}\big)|y|^{4\gamma} e^{4u(y)}{\rm d}y\leq 0
	\end{equation*}
	and
	\begin{equation*}
	v(x)\leq\frac{3}{4\pi^2}\int_{A_2}\log\big(\frac{|x-y|}{|y|}\big)|y|^{4\gamma} e^{4u(y)}{\rm d}y\leq 0.
	\end{equation*}
	To evaluate the integral over $A_2$, we first make two trivial observations:
 $$|x-y|\leq|x|+|y|\leq|x||y|, \quad \mbox{if} \quad |y|\ge 2, $$
 $$\log|x-y|\leq\log|x|+C, \quad \mbox{if }\quad |y|\leq 2 $$
  where $|x|>4$ is used. Consequently
	\begin{equation*}
	\begin{split}
	v(x)&\leq\frac{3}{4\pi^2}\int_{A_2}\log\big(\frac{|x-y|}{|y|}\big)|y|^{4\gamma} e^{4u(y)}{\rm d}y\\
	&\leq\frac{3}{4\pi^2}\Big\{\log|x|\int_{A_2\cap\{|y|\geq2\}}|y|^{4\gamma} e^{4u}{\rm d}y+\int_{A_2\cap\{|y|\leq2\}}\log\big(\frac{|x-y|}{|y|}\big)|y|^{4\gamma} e^{4u}{\rm d}y\Big\}\\
	&\leq\frac{3}{4\pi^2}\Big\{\log|x|\int_{A_2}|y|^{4\gamma} e^{4u}{\rm d}y+C\int_{A_2\cap\{|y|\leq2\}}|y|^{4\gamma} e^{4u}{\rm d}y\\
	&\qquad\quad-\int_{A_2\cap\{|y|\leq2\}}\big(\log|y|\big)|y|^{4\gamma} e^{4u}{\rm d}y\Big\}\\
	&\leq\frac{3}{4\pi^2}\log|x|\int_{\mathbb{R}^4}|y|^{4\gamma} e^{4u}{\rm d}y+C.
	\end{split}
	\end{equation*}
	Lemma \ref{lem-v-upper} is established.
	\end{proof}

Before proving a lower bound of $v(x)$ we derive an expression of  $\Delta u(x)$ in integral form.

\begin{lem}\label{lem-lap-u}
	
	Suppose $u$ is a solution of (\ref{equ-liou-2}). Then there exists a constant $C_1\geq 0$ such that
	\begin{equation}\label{lap-u}
	\Delta u(x)=-\frac{3}{2\pi^2}\int_{\mathbb{R}^4}\frac{1}{|x-y|^2}|y|^{4\gamma}e^{4u(y)}{\rm d}y-C_1.
	\end{equation}
	
\end{lem}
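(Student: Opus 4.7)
The plan is to recognize (\ref{lap-u}) as a harmonic-correction identity and then pin the correction down to a non-positive constant. First I would verify that the kernel on the right-hand side of (\ref{lap-u}) is exactly $-\Delta v$. Differentiating (\ref{v-def}) twice under the integral sign (justified by $|y|^{4\gamma}e^{4u}\in L^1$ and the $L^1_{\mathrm{loc}}$ character of $|x-y|^{-2}$) and using the $\mathbb R^4$ identity $\Delta_x\log|x-y|=2|x-y|^{-2}$ gives
$$\Delta v(x)=\frac{3}{2\pi^2}\int_{\mathbb R^4}\frac{|y|^{4\gamma}e^{4u(y)}}{|x-y|^2}\,dy.$$
Combining (\ref{equ-liou-2}) with (\ref{v-equ}) yields $\Delta^2(u+v)\equiv 0$ on $\mathbb R^4$, so
$$\phi(x):=\Delta u(x)+\Delta v(x)=\Delta(u+v)(x)$$
is harmonic on $\mathbb R^4$. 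The lemma is now equivalent to the claim $\phi\equiv -C_1$ for some constant $C_1\ge 0$.

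Next I would show that $\phi$ is bounded above on $\mathbb R^4$, so Liouville's theorem forces $\phi$ to be a constant. Two ingredients enter. First, $\Delta u$ is itself subharmonic because $\Delta(\Delta u)=6|x|^{4\gamma}e^{4u}\ge 0$, so the sub-mean-value inequality $\Delta u(x)\le\fint_{B_1(x)}\Delta u$ is available. Second, Lemma \ref{lem-v-upper} gives $v\le\alpha\log|x|+C$, and since the integrand defining $\Delta v$ is nonnegative while $|y|^{4\gamma}e^{4u}\in L^1(\mathbb R^4)$, dominated convergence gives $\Delta v(x)\to 0$ as $|x|\to\infty$. Writing $\Delta u=\phi-\Delta v$ and averaging over $B_1(x)$, I would bound $\fint_{B_1(x)}\Delta u$ from above by combining the harmonic mean value for $\phi$ with the upper bound on $v$ through two further integrations by parts that trade averages of $\Delta u$ for averages of $u$, the latter being controlled by $v+\mathrm{const}$ modulo the harmonic $\phi$-part. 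This yields a uniform upper bound on $\phi$, and Liouville then gives $\phi\equiv -C_1$ for some real $C_1$.

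The sign $C_1\ge 0$ is forced by sending $|x|\to\infty$ in $\phi=\Delta u+\Delta v$. Since $\Delta v(x)\to 0$, we obtain $\Delta u(x)\to -C_1$. If $C_1<0$, then $\Delta u\ge\varepsilon>0$ for all sufficiently large $|x|$; combining this with the subharmonicity of $\Delta u$ and integrating along rays would force $u$ to grow at least quadratically at infinity, contradicting $|y|^{4\gamma}e^{4u}\in L^1(\mathbb R^4)$.

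The main obstacle is the uniform upper bound on $\phi$ in the second paragraph: one must extract a pointwise upper bound on $\Delta u$ from only an upper bound on $v$ together with the $L^1$ bound on the right-hand side of (\ref{equ-liou-2}), without any a priori growth control on $u$ itself. The cleanest route is to exploit that $u+v$ is biharmonic with harmonic Laplacian $\phi$, and to combine the sub-mean-value inequality for the subharmonic $\Delta u$ with the mean-value equality for the harmonic $\phi$ on carefully chosen balls, so that the integrability hypothesis can be brought to bear uniformly in $x$.
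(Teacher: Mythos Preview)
Your outline has the right architecture---set $\phi=\Delta(u+v)$, show it is harmonic, show it is a nonpositive constant---but the two substantive steps are not actually carried out, and the sketches you give for them do not work.

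\medskip

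\textbf{The upper bound on $\phi$.} You propose to bound $\phi$ above by combining the sub-mean-value inequality for $\Delta u$ with ``two further integrations by parts that trade averages of $\Delta u$ for averages of $u$, the latter being controlled by $v+\mathrm{const}$''. But there is no upper bound on $u$ available: Lemma~\ref{lem-v-upper} bounds $v$ from above, which gives $w=u+v\le u+\alpha\log|x|+C$, an inequality in the wrong direction. The only global information on $u$ is $|x|^{4\gamma}e^{4u}\in L^1$, an integral bound on the \emph{exponential}. Averages of $u$ itself are not controlled, so trading $\Delta u$-averages for $u$-averages gains nothing. The paper's device is precisely to bridge this gap: from the mean-value identity for the harmonic $\Delta w$ one gets
\[
\frac{r^2}{8}\,\Delta w(x_0)=\dashint_{\partial B(x_0,r)}w\,d\sigma - w(x_0),
\]
and then Jensen's inequality converts the uncontrolled spherical average of $w$ into an average of $e^{4w}$. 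Using $v\le\alpha\log|x|+C$ to write $e^{4w}\le C|x|^{4\alpha}e^{4u}$, one lands on the $L^1$ quantity $|x|^{4\gamma}e^{4u}$ (up to polynomial weights). The resulting inequality has $\exp\big(\tfrac12\Delta w(x_0)\,r^2\big)$ on the left integrated against a polynomial in $r$, which forces $\Delta w(x_0)\le 0$. This exponentiation step is the missing idea; without it there is no route from the $L^1$ hypothesis to a pointwise bound on $\phi$.

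\medskip

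\textbf{The sign of $C_1$ and the claim $\Delta v\to 0$.} You assert that dominated convergence gives $\Delta v(x)\to 0$ as $|x|\to\infty$, and then use this to argue the sign. But there is no dominating function: the kernel $|x-y|^{-2}$ is singular at $y=x$, and at this stage you have no uniform bound on $e^{4u}$ near large $x$. In the paper the statement $\Delta v\to 0$ is established only \emph{after} Lemma~\ref{lem-lap-u}, inside the proof of Lemma~\ref{lem-v-lower}, using the higher integrability $\int_{B(x,1)}e^{24\tilde u}\le C$ that itself relies on (\ref{lap-u}). So invoking it here is circular. In the paper the sign $C_1\ge 0$ is not a separate step at all: the Jensen argument yields $\Delta w(x_0)\le 0$ for every $x_0$ directly, and Liouville for a harmonic function bounded above then gives $\Delta w\equiv -C_1$ with $C_1\ge 0$ automatically.
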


\begin{proof}[\textbf{Proof}]
	
	Let $w(x)=u(x)+v(x)$. Then from the equations of $u$ and $v$ in (\ref{equ-liou-2}) and (\ref{v-equ}), we have $\Delta^2w=0$ in $\mathbb{R}^4$. Hence, $\Delta w$ is a harmonic function in $\mathbb{R}^4$. By the mean value property of harmonic functions, we have, for any $x_0\in \mathbb{R}^4$ and $r>0$,
	\begin{equation*}
	\begin{split}
	\Delta w(x_0)=\frac{2}{\pi^2 r^4}\int_{B(x_0,r)}\Delta w(y){\rm d}y=\frac{2}{\pi^2 r^4}\int_{\partial B(x_0,r)}\Delta w(y){\rm d}\sigma,
	\end{split}
	\end{equation*}
	where $\frac{\pi^2}{2}$ is the volume of the unit ball. That is
	\begin{equation}\label{lap-w}
	\frac{r}{4}\Delta w(x_0)=\dashint_{|y-x_0|=r}\frac{\partial w}{\partial r}(y){\rm d}\sigma,
	\end{equation}
	where $\dashint_{|y-x_0|=r}f(y){\rm d}\sigma=\frac{1}{2\pi^2r^3}\int_{|y-x_0|=r}f(y){\rm d}\sigma$ denotes the integral average of $f$ over $\partial B(x_0,r)$. Then integrating the identity above along $r$, we get
	\begin{equation*}
	\frac{r^2}{8}\Delta w(x_0)=\dashint_{|y-x_0|=r}w{\rm d}\sigma-w(x_0).
	\end{equation*}
	Therefore, the Jensen inequality implies
	\begin{equation*}
	\begin{split}
	\exp\Big(\frac{r^2}{2}\Delta w(x_0)\Big)&\leq e^{-4w(x_0)}\exp\Big(4\dashint_{|y-x_0|=r} w {\rm d}\sigma\Big)\\
	&\leq e^{-4w(x_0)}\dashint_{|y-x_0|=r} e^{4w} {\rm d}\sigma
	\end{split}
	\end{equation*}
	From Lemma \ref{lem-v-upper}, we have $w(x)=u(x)+v(x)\leq u(x)+\alpha\log|x|+C$, and as a consequence
	\begin{equation*}
	\begin{split}
	&\int_{0}^{\infty}r^{3-4\alpha+4\gamma}\exp\big(\frac{\Delta w(x_0)}{2}r^2\big){\rm d}r
	\leq\int_{\mathbb{R}^4}|x|^{-4\alpha+4\gamma}e^{-4w(x_0)}e^{4w(x)}{\rm d}x\\
	\leq& C\int_{\mathbb{R}^4}|x|^{-4\alpha+4\gamma}e^{4u(x)}|x|^{4\alpha}{\rm d}x
	=C\int_{\mathbb{R}^4}|x|^{4\alpha}e^{4u}{\rm d}x<+\infty,
	\end{split}
	\end{equation*}
	which means
	$$r^{3-4\alpha+4\gamma}\exp\Big(\frac{\Delta w(x_0)}{2}r^2\Big)\in L^1\big([1,+\infty)\big).$$
	From here we see  $\Delta w(x_0)\leq 0$ for all $x_0\in\mathbb{R}^4$. Liouville's Theorem implies that there exists some constant $C_1\geq0$ such that $\Delta w(x)\equiv-C_1$ in $\mathbb{R}^4$.
Lemma \ref{lem-lap-u} is established based on this and 	
	\begin{equation*}
	\Delta v(x)=\frac{3}{2\pi^2}\int_{\mathbb{R}^4}\frac{1}{|x-y|^2}|y|^{4\gamma}e^{4u(x)}{\rm d}y.
	\end{equation*}
\end{proof}

With the help of the representation for $\Delta u$, we can estimate $v(x)$ from below for $|x|$ large. We will use following result in Lemma 2.3 of \cite{lin-classification}.

\medskip

Let $h(x)$ be the solution of
\begin{equation*}
\left\{\begin{array}{lcl}
\Delta^2 h(x)=f(x), &&  {\rm in} \ \; \Omega,
\\
\Delta h(x)=h(x)=0, && {\rm on} \ \; \partial\Omega,
\end{array}
\right.
\end{equation*}
where $\Omega$ is a bounded domain of $\mathbb{R}^4$.

\begin{lemA}\label{lem-BM}\cite{lin-classification}
	
	Suppose $f\in L^1(\bar{\Omega})$.Then for any $\delta\in(0,32\pi^2)$, there exists a constant $C_{\delta}>0$ such that
	\begin{equation*}
	\int_{\Omega}\exp\Big(\frac{\delta|h|}{\parallel f\parallel_{L^1}}\Big){\rm d}x\leq C_{\delta}(diam \; \Omega)^4,
	\end{equation*}
	where $diam \;\Omega$ denotes the diameter of $\Omega$.
	
\end{lemA}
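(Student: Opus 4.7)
My plan is to adapt the Brezis--Merle scheme to the biharmonic operator with Navier boundary conditions. The critical value $32\pi^2=4\cdot 8\pi^2$ is dictated by two ingredients: the $4$-dimensional volume growth and the $-\frac{1}{8\pi^2}$ coefficient of $\log|x|$ in the fundamental solution of $\Delta^2$ on $\mathbb{R}^4$. I would first represent $h$ through the Green's function $G^N_\Omega$ of $\Delta^2$ with Navier data. Since $h=\Delta h=0$ on $\partial\Omega$, $G^N_\Omega$ can be built by iterating the Dirichlet Green's function of $-\Delta$, is nonnegative (double maximum principle), and
\[
h(x)=\int_\Omega G^N_\Omega(x,y)\,f(y)\,dy.
\]

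The crucial pointwise bound I aim to establish is
\[
G^N_\Omega(x,y)\le \frac{1}{8\pi^2}\log\frac{d}{|x-y|},\qquad d:=\operatorname{diam}\Omega,\quad x,y\in\Omega.
\]
Fix $y\in\Omega$ and set $\phi(x):=\frac{1}{8\pi^2}\log(d/|x-y|)$. Then $\phi\ge 0$ on $\overline{\Omega}$, $\Delta^2\phi=\delta_y$ in $\mathcal{D}'(\mathbb{R}^4)$, and a direct computation gives $-\Delta\phi=\frac{1}{4\pi^2|x-y|^2}\ge 0$. The difference $\psi:=\phi-G^N_\Omega(\cdot,y)$ is biharmonic in $\Omega$ because the singularities at $y$ cancel, and on $\partial\Omega$ satisfies $\psi=\phi\ge 0$ together with $-\Delta\psi=-\Delta\phi\ge 0$ (since $\Delta G^N_\Omega(\cdot,y)=0$ on $\partial\Omega$). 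Two successive applications of the maximum principle---first to the harmonic function $-\Delta\psi$ with nonnegative boundary values, then to $\psi$ itself using $-\Delta\psi\ge 0$ inside $\Omega$---yield $\psi\ge 0$ and hence the estimate.

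Given this bound, the rest is a Jensen argument. Set $p:=\delta/(8\pi^2)\in(0,4)$ and introduce the probability measure $d\tilde\mu:=|f|\,dy/\|f\|_{L^1}$. Since $\log(d/|x-y|)\ge 0$ on $\Omega\times\Omega$,
\[
\frac{\delta\,|h(x)|}{\|f\|_{L^1}}\le p\int_\Omega \log\frac{d}{|x-y|}\,d\tilde\mu(y),
\]
and Jensen's inequality for the convex function $\exp$ gives
\[
\exp\!\left(\frac{\delta\,|h(x)|}{\|f\|_{L^1}}\right)\le \int_\Omega\Bigl(\frac{d}{|x-y|}\Bigr)^{p}\,d\tilde\mu(y).
\]
Integrating in $x$ and applying Fubini, the inner integral $\int_\Omega|x-y|^{-p}\,dx$ is bounded by $2\pi^2 d^{4-p}/(4-p)$ because $p<4$; multiplying by $d^p$ produces the claim with $C_\delta=2\pi^2/(4-\delta/(8\pi^2))$.

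The main obstacle is obtaining the sharp coefficient $1/(8\pi^2)$ in the Green's function estimate. A crude iterated bound using $G^{\mathrm{Lap}}_\Omega(x,z)\le 1/(4\pi^2|x-z|^2)$ only gives $G^N_\Omega(x,y)\le (16\pi^4)^{-1}\int_\Omega|x-z|^{-2}|z-y|^{-2}\,dz$, and the resulting Riesz composition (at the critical balance $2+2=4$ in dimension $4$) produces $\log(d/|x-y|)$ with a constant strictly larger than $1/(8\pi^2)$, which would push the admissible $\delta$ strictly below $32\pi^2$. The direct comparison with the fundamental solution via the double maximum principle, outlined above, is exactly what restores the sharp threshold and so is the heart of the argument.
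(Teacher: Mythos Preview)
Your argument is correct and is precisely the standard Brezis--Merle adaptation that underlies this result. Note, however, that the paper does not actually prove this lemma: it is quoted verbatim as Lemma~2.3 of Lin \cite{lin-classification} and used as a black box, so there is no ``paper's own proof'' to compare against. Your comparison-with-fundamental-solution step (the double maximum principle applied to $\psi=\phi-G^N_\Omega(\cdot,y)$) is exactly the device Lin uses to obtain the sharp coefficient $1/(8\pi^2)$, and your Jensen/Fubini computation with $p=\delta/(8\pi^2)<4$ matches the original. The explicit constant $C_\delta=2\pi^2/(4-\delta/(8\pi^2))$ you obtain is also correct.
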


\begin{lem}\label{lem-v-lower}
	Let $u$ be a solution of (\ref{equ-liou-2}) and $v$ be in (\ref{v-def}). Then for given $\varepsilon>0$, there exists a constant $R=R(\varepsilon)$ only depending on $\varepsilon$ such that
	\begin{equation}\label{v-lower}
		v(x)\geq(\alpha-\varepsilon)\log|x|, \quad |x|>R(\epsilon).
	\end{equation}
	
\end{lem}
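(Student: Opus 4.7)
The goal is $v(x)\ge(\alpha-\varepsilon)\log|x|$ for $|x|$ large. The strategy is to split the defining integral for $v$ according to the location of $y$ relative to $x$, estimating the main contribution from the region where $|y|$ is bounded, showing that the bulk tail contributions are small, and handling the singular contribution near $y=x$ via the Brezis--Merle type estimate of Lemma~A.

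Given $\varepsilon>0$, I would first pick $R_0=R_0(\varepsilon)$ large enough so that the tail mass $\frac{3}{4\pi^2}\int_{|y|>R_0}|y|^{4\gamma}e^{4u(y)}\,dy<\eta$, where $\eta>0$ will be chosen small in terms of $\varepsilon$ at the end. For $|x|>2R_0$, I would decompose $\mathbb{R}^4$ into four regions: the core $\{|y|\le R_0\}$, the inner shell $\{R_0<|y|\le|x|/2\}$, the critical annulus $\{|x|/2<|y|<2|x|\}$, and the outer region $\{|y|\ge 2|x|\}$. On the core, the elementary bound $|x-y|\ge|x|/2$ gives $\log(|x-y|/|y|)\ge\log|x|-\log 2-\log|y|$; since $|y|^{4\gamma}|\log|y||$ is integrable near $0$ (as $\gamma>-1$), the contribution is at least $(\alpha-\eta)\log|x|-C(R_0)$. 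On the inner shell, $|x-y|\ge|y|$ makes $\log(|x-y|/|y|)\ge 0$, a nonnegative contribution; on the outer region, $|x-y|\ge|y|/2$ yields a contribution at least $-\eta\log 2$.

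The main obstacle is the critical annulus, where $\log|x-y|\to-\infty$ at $y=x$. Here $|y|\sim|x|$, so $-\log|y|\ge-\log(2|x|)$, and the only problematic piece is
$$J(x):=\int_{B_1(x)}(-\log|x-y|)\,|y|^{4\gamma}e^{4u(y)}\,dy.$$
Because $B_2(x)\subset\{|y|>R_0\}$ for $|x|>R_0+2$, the $L^1$-norm of the source $6|y|^{4\gamma}e^{4u}$ on $B_2(x)$ is at most $C\eta$. I would split $u|_{B_2(x)}=u_1+u_2$ with $u_1$ biharmonic in $B_2(x)$ matching $u$ and $\Delta u$ on $\partial B_2(x)$ and $u_2=\Delta u_2=0$ on $\partial B_2(x)$; Lemma~A applied to $u_2$ gives $\int_{B_2(x)}\exp(\delta|u_2|/(C\eta))\,dy\le C$ for $\delta<32\pi^2$, hence $e^{4pu_2}\in L^1(B_1(x))$ for some $p>1$ whose size improves as $\eta$ shrinks. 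Uniform control of $u_1$ (and hence of $e^{4u_1}$) on $B_1(x)$ is obtained from the representation $u=w-v$ together with the upper bound $v\le\alpha\log|x|+C$ from Lemma~\ref{lem-v-upper} and $\Delta(u+v)=-C_1$ from Lemma~\ref{lem-lap-u}. An application of H\"older's inequality against the universally $L^{p'}$-integrable function $\log|x-y|$ on $B_1(x)$ then yields $J(x)\le C\eta\log|x|+C$ uniformly in $x$.

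Collecting the four contributions gives $v(x)\ge(\alpha-C\eta)\log|x|-C(R_0,\eta)$, and choosing $\eta$ small compared to $\varepsilon$ and then $|x|$ large completes the proof. The principal difficulty will be the treatment of the critical annulus, where the interplay between the small-$L^1$ mass hypothesis, the Brezis--Merle improved local integrability of Lemma~A, and the uniform control of the biharmonic boundary correction is most delicate.
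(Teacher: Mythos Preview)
Your overall strategy---decompose the domain of integration, extract the main term $(\alpha-\eta)\log|x|$ from the region of bounded $|y|$, show the remaining bulk contributions are harmless, and handle the singular piece near $y=x$ via Lemma~A---is exactly the paper's approach. Your four-region decomposition keyed to $|y|$ is equivalent to the paper's three-region decomposition keyed to $|x-y|$; the difference is cosmetic.

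The gap is in the sentence ``Uniform control of $u_1$ (and hence of $e^{4u_1}$) on $B_1(x)$ is obtained from the representation $u=w-v$ together with the upper bound $v\le\alpha\log|x|+C$ from Lemma~\ref{lem-v-upper} and $\Delta(u+v)=-C_1$ from Lemma~\ref{lem-lap-u}.'' These ingredients do not by themselves produce an upper bound on $u_1$ uniform in $x$: the inequality $v\le\alpha\log|x|+C$ gives a \emph{lower} bound on $u=w-v$, not an upper one, and $\Delta w=-C_1$ alone does not bound $w$ from above. An a~priori upper bound on $u$ would in fact be circular, since that is essentially what a lower bound on $v$ delivers.

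The paper fills this gap as follows. First it passes to $\tilde u=u-\gamma\log|\cdot|$, so the weight is absorbed and the source is $e^{4\tilde u}$; this matters when $\gamma>0$, since on your critical annulus $|y|^{4\gamma}\sim|x|^{4\gamma}\to\infty$. Writing $q=\tilde u-h$ (with $h$ the Navier solution, your $u_2$) biharmonic on $B(x,4)$, the key observation is that $-\Delta q+2\gamma/|y|^2$ is harmonic and \emph{positive} there, positivity coming directly from the integral formula for $-\Delta u$ in Lemma~\ref{lem-lap-u}. Harnack's inequality then bounds $-\Delta q$ on $B(x,2)$ by its value at the center, which by the mean-value property equals $-\fint_{\partial B(x,4)}\Delta\tilde u\,d\sigma$; an explicit computation using the equation and Lemma~\ref{lem-lap-u} shows this spherical mean is bounded uniformly in $x$. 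Finally a local maximum principle gives
\[
\sup_{B(x,1)}q\le C\bigl(\|q^+\|_{L^p(B(x,2))}+\|\Delta q\|_{L^\sigma(B(x,2))}\bigr),
\]
where $\|q^+\|_{L^p}$ is controlled because $q^+\le\tilde u^++|h|$ and $e^{4\tilde u^+}\le 1+e^{4\tilde u}\in L^1$ by hypothesis. This yields $\tilde u\le c+|h|$ on $B(x,1)$, hence $\int_{B(x,1)}e^{24\tilde u}\le C$ uniformly, and H\"older then gives $J(x)\le C$ (a constant, not $C\eta\log|x|$). The Harnack step and the $L^p$-to-$L^\infty$ subsolution estimate are the missing mechanisms in your sketch.
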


\begin{proof}[\textbf{Proof}]
	
	We first prove a claim slightly weaker than (\ref{v-lower}): for any $\varepsilon>0$, there exists $R=R(\varepsilon)>0$ such that
	\begin{equation}\label{v-lower-rough}
		v(x)\geq(\alpha-\frac{\varepsilon}{2})\log|x| +\frac{3}{4\pi^2}\int_{B(x,1)}(\log|x-y|)|y|^{4\gamma}e^{4u(y)}{\rm d}y.
	\end{equation}
	To prove (\ref{v-lower-rough}) we consider $\mathbb R^4$ as a disjoint union of three sets: $\mathbb{R}^4=A_1\cup A_2\cup A_3$, where
	\begin{align*}
		A_1&=\{y:|y|<R_0\}, \\
		A_2&=\{y:|x-y|\leq |x|/2,|y|\geq R_0\}, \\ A_3&=\{y:|x-y|\geq |x|/2,|y|\geq R_0\}.
	\end{align*}

	Then we choose $R_0=R_0(\varepsilon)$ sufficiently large so that
	\begin{equation*}
		\begin{split}
			&\frac{3}{4\pi^2}\int_{A_1}\log\big(\frac{|x-y|}{|y|}\big)|y|^{4\gamma}e^{4u(y)}{\rm d}y-\alpha \log|x|\\
			=&\frac{3}{4\pi^2}\log|x|\int_{A_1}\frac{\log|x-y|-\log|x|-\log|y|}{\log|x|}|y|^{4\gamma}e^{4u(y)}{\rm d}y-\frac{\varepsilon}{8} \log|x|  \\
			\geq& -\frac{\varepsilon}{4} \log|x|
		\end{split}
	\end{equation*}
	for large $|x|$. Thus we have
	\begin{equation}\label{int-A1}
		\frac{3}{4\pi^2}\int_{A_1}\log\big(\frac{|x-y|}{|y|}\big)|y|^{4\gamma}e^{4u(y)}{\rm d}y\geq (\alpha-\frac{\varepsilon}{4} )\log|x|.
	\end{equation}
	
	For $x\in A_2$ and $|x|$ large, we have $\frac{|x|}{2}\leq |x|\leq\frac{3}{2}|x|$. Then
	\begin{equation}\label{int-A2}
		\begin{split}
			&\int_{A_2}\log\big(\frac{|x-y|}{|y|}\big)|y|^{4\gamma}e^{4u(y)}{\rm d}y \\
			=&\int_{A_2}(\log|x-y|)|y|^{4\gamma}e^{4u(y)}{\rm d}y-\int_{A_2}(\log|y|)|y|^{4\gamma}e^{4u(y)}{\rm d}y \\
			\geq& \int_{B(x,1)}(\log|x-y|)|y|^{4\gamma}e^{4u(y)}{\rm d}y-\log(2|x|)\int_{A_2}|y|^{4\gamma}e^{4u(y)}{\rm d}y.
		\end{split}
	\end{equation}
	
	For $y\in A_3$, we use two trivial inequalities: $|x-y|\geq\frac{|x|}{2}\geq\frac{|y|}{4}$ if $|y|\le 2|x|$ and
  $|x-y|\geq|y|-|x|\geq\frac{|y|}{2}$ if $|y|\ge 2|x|$. Clearly in both cases, we have
	\begin{equation*}
		\frac{|x-y|}{|y|}\geq\frac{1}{4},\quad y\in A_3.
	\end{equation*}
	Therefore,
	\begin{equation}\label{int-A3}
		\frac{3}{4\pi^2}\int_{A_3}\log\big(\frac{|x-y|}{|y|}\big)|y|^{4\gamma}e^{4u(y)}{\rm d}y\geq \log\frac{1}{4}\int_{A_3}|y|^{4\gamma}e^{4u(y)}{\rm d}y.
	\end{equation}
	From (\ref{int-A1}), (\ref{int-A2}), (\ref{int-A3}) and $|y|^{4\gamma}e^{4u(y)}\in L^1(\mathbb{R}^4)$, we obtain (\ref{v-lower-rough}).
	
	\medskip
	
	Next, we show that
	\begin{equation}\label{int-B_x1-v}
		\int_{B(x,1)}(\log|x-y|)|y|^{4\gamma}e^{4u(y)}{\rm d}y\geq-C
	\end{equation}
	for some positive constant $C$. For this purpose we set
	\begin{equation}
		\tilde{u}(x)=u(x)+\gamma\log |x|
	\end{equation}
	Then $\tilde{u}$ satisfies
	\begin{equation}\label{equ-liou}
	\left\{\begin{array}{lcl}
	\Delta^2 \tilde{u}(x)=6e^{4\tilde{u}(x)}-8\pi^2\gamma\delta_0, &&  {\rm in} \ \; \mathbb{R}^4,
	\\
	e^{4\tilde{u}}\in L^1(\mathbb{R}^4). && \quad
	\end{array}
	\right.
	\end{equation}
	
	Let $0<\varepsilon_0<\pi^2$ and $R_0=R_0(\varepsilon_0)$ be sufficiently large such that
	\begin{equation}\label{energy-B4}
	6\int_{B(x,4)}|y|^{4\gamma}e^{4u(y)}{\rm d}y=6\int_{B(x,4)}e^{4\tilde{u}(y)}{\rm d}y\leq\varepsilon_0, \quad {\rm for}\ \, |x|\geq R_0,
	\end{equation}
	then we let $h$ be the solution of
	\begin{equation*}
	\left\{\begin{array}{lcl}
	\Delta^2 h(x)=6e^{4\tilde{u}(y)}, &&  {\rm in} \ \; B(x,4),
	\\
	h(x)=\Delta h(x)=0, && {\rm on} \ \; \partial B(x,4).
	\end{array}
	\right.
	\end{equation*}
	From Lemma \ref{lem-BM}, we can see that for $\varepsilon_0>0$ small,
	\begin{equation}\label{int-h-B4}
	\int_{B(x,4)}e^{24|h|}{\rm d}y\leq c_1
	\end{equation}
	for some constant $c_1$ independent of $x$.
	
	Next we set $q(y)=\tilde{u}(y)-h(y)$ for $y\in B(x,4)$, which clearly satisfies
	\begin{equation*}
	\left\{\begin{array}{lcl}
	\Delta^2 q(y)=0, &&  {\rm in} \ \; B(x,4),
	\\
	q(y)=\tilde{u}(y),\quad \Delta q(y)=\Delta\tilde{u}(y), && {\rm on} \ \; \partial B(x,4).
	\end{array}
	\right.
	\end{equation*}
	Let $\tilde{q}(y)=-\Delta q(y)$. We see that for $|x|$ large enough and $y\in\partial B(x,4)$
	\begin{equation*}
	\begin{split}
	\tilde{q}(y)=&-\Delta \tilde{u}(y)=-\Delta u(y)-\gamma\Delta(\log |y|)=-\Delta u(y)-\frac{2\gamma}{|y|^2}.
	\end{split}
	\end{equation*}
	By setting
	\begin{equation*}
		\hat{q}(y)=\tilde{q}(y)+\frac{2\gamma}{|y|^2}=-\Delta u(y)=\frac{3}{2\pi^2}\int_{\mathbb{R}^4}\frac{1}{|z-y|^2}|z|^{4\gamma}e^{4u(z)}{\rm d}z+C_1,\quad y\in\overline{B(x,2)},
	\end{equation*}
	we obviously have $\hat{q}(y)>0$ on $\partial B(x,4)$, and hence $\tilde{q}(y)>-2\gamma/|y|^2$ on $\partial B(x,4)$. Observing that $1/|y|^2$ is the fundamental solution of $\Delta$. In other words, $\hat{q}(y)$ is harmonic in $B(x,4)$ with positive boundary value on $\partial B(x,4)$. The maximum principle implies $\hat{q}>0$ in $B(x,4)$. Thus, by the Harnack inequality and mean value property of harmonic  functions, we have
	\begin{equation}\label{est-lap-q}
		\begin{split}
			\tilde{q}(y)&=\hat{q}(y)-\frac{2\gamma}{|y|^2}  
			\leq c_2\hat{q}(x)-\frac{2\gamma}{|y|^2}  
			=-c_2\dashint_{\partial B(x,4)}\Delta u{\rm d }\sigma-\frac{2\gamma}{|y|^2}  \\
			&=-c_2\dashint_{\partial B(x,4)}\Delta \tilde{u}{\rm d }\sigma+c_2\dashint_{\partial B(x,4)}\frac{2\gamma}{|y|^2} {\rm d }\sigma -\frac{2\gamma}{|y|^2}  \\
			&\le -c_2\dashint_{\partial B(x,4)}\Delta \tilde{u}{\rm d }\sigma+C,\quad y\in\overline{B(x,2)},
		\end{split}
	\end{equation}
	with constants $c_2$ and $C$.
	
	\medskip
	Integrating (\ref{equ-liou}) along $r$, we have
	\begin{equation*}
	\begin{split}
	&\dashint_{\partial B(x,4)}\Delta \tilde{u}{\rm d}\sigma-\Delta\tilde{u}(x)=\int_{0}^{r}\frac{3}{\pi^2 s^3}\int_{0}^{s}\int_{\partial B(x,t)}e^{4\tilde{u}}{\rm d}\sigma{\rm d}t{\rm d}s \\
	=&\int_{0}^{r}\frac{3}{\pi^2 s^3}\int_{0}^{s}t^3\int_{\partial B(x,1)}e^{4\tilde{u}}{\rm d}\sigma{\rm d}t{\rm d}s \\
	=&\int_{0}^{r}t^3\int_{\partial B(x,1)}e^{4\tilde{u}}\Big(\frac{3}{2\pi^2t^2}-\frac{3}{2\pi^2r^2}\Big){\rm d}\sigma{\rm d}t.
	\end{split}
	\end{equation*}
	That is
	\begin{equation}\label{ave-int-tildeu}
	\dashint_{\partial B(x,4)}\Delta \tilde{u}{\rm d}\sigma-\Delta\tilde{u}(x)=\frac{3}{2\pi^2}\int_{ B(x,r)}\Big(\frac{1}{|x-y|^2}-\frac{1}{r^2}\Big)e^{4\tilde{u}}{\rm d}y.
	\end{equation}
	Next by Lemma \ref{lem-lap-u} and (\ref{ave-int-tildeu}), we can see
	\begin{equation*}
	\begin{split}
	&-\dashint_{\partial B(x,4)}\Delta \tilde{u}{\rm d}\sigma=-\Delta\tilde{u}(x)-\frac{3}{2\pi^2}\int_{ B(x,r)}\frac{1}{|x-y|^2}e^{4\tilde{u}}{\rm d}y+ \frac{3}{2\pi^2r^2}\int_{ B(x,r)}e^{4\tilde{u}}{\rm d}y\\
	=&-\Delta u(x)-\frac{3}{2\pi^2}\int_{ B(x,r)}\frac{1}{|x-y|^2}e^{4\tilde{u}}{\rm d}y+ \frac{3}{2\pi^2r^2}\int_{ B(x,r)}e^{4\tilde{u}}{\rm d}y-\frac{2\gamma}{|x|^2}\\
	=&\frac{3}{2\pi^2r^2}\int_{|x-y|\geq r}\frac{1}{|x-y|^2}e^{4\tilde{u}}{\rm d}y+ \frac{3}{2\pi^2r^2}\int_{ B(x,r)}e^{4\tilde{u}}{\rm d}y-\frac{2\gamma}{|x|^2}+C_1.
	\end{split}
	\end{equation*}
	In particular, for $r=4$ and $|x|$ large,
	\begin{equation}\label{est-lap-aveu}
	-\dashint_{\partial B(x,4)}\Delta \tilde{u}{\rm d}\sigma\leq c_3.
	\end{equation}
	Hence, from (\ref{est-lap-q}), we get
	\begin{equation}\label{est-lap-q-2}
	\tilde{q}(y)\leq c_4,\,\quad y\in\overline{B(x,2)},
	\end{equation}
	and immediately
	$$|\tilde{q}(y)|\leq c_5,\,\quad y\in\overline{B(x,2)}.$$
	
	Since $q$ satisfies
	\begin{equation*}
	\left\{\begin{array}{lcl}
	\Delta q(y)=-\tilde{q}(y), &&  {\rm in} \ \; B(x,4),
	\\
	q(y)=\tilde{u}(y), && {\rm on} \ \; \partial B(x,4),
	\end{array}
	\right.
	\end{equation*}
	by estimates for linear elliptic equations, we have for any $p>1$ and $\sigma>2$,
	\begin{equation}\label{est-sup-q}
	\sup_{B(x,1)} q\leq c\big(\parallel q^{+}\parallel_{L^p(B(x,2))}+\parallel \tilde{q}\parallel_{L^{\sigma}(B(x,2))}\big),
	\end{equation}
	where $c=c(p,\sigma)$.
	
	On the other hand, we observe that $q^{+}(y)\leq\tilde{u}^{+}(y)+|h(y)|$ for $y\in B(x,4)$. Then by (\ref{int-h-B4}), we get
	\begin{equation*}
	\begin{split}
	\int_{ B(x,2)}(q^{+})^p\leq c_6\int_{ B(x,2)}e^{2q^{+}}\leq c_5\Big(\int_{ B(x,2)}e^{4\tilde{u}^{+}}\Big)^{\frac{1}{2}}\Big(\int_{ B(x,2)}e^{4|h|}\Big)^{\frac{1}{2}}\leq c_7\Big(\int_{ B(x,2)}e^{4\tilde{u}^{+}}\Big)^{\frac{1}{2}}
	\end{split}
	\end{equation*}
	Since $e^{4\tilde{u}^{+}}\leq 1+e^{4\tilde{u}}$, we have $\parallel q^{+}\parallel_{L^p(B(x,2))}\leq c_7$, which together with (\ref{est-lap-q-2}) and (\ref{est-sup-q}) implies
	\begin{equation}\label{est-sup-q-2}
	\sup_{B(x,1)} q\leq c_8.
	\end{equation}
	
	In view of $\tilde{u}=h+q$, we now obtain
	\begin{equation}
	\tilde{u}(y)\leq h(y)+q(y)\leq c_8+|h(y)|,\,\quad y\in\overline{B(x,2)}.
	\end{equation}
	Therefore,
	\begin{equation}\label{int-24u}
	\int_{ B(x,1)}e^{24\tilde{u}}\leq c_9	\int_{ B(x,1)}e^{24|h|}{\rm d}y\leq c_{10},
	\end{equation}
	Then
	\begin{equation*}
	\Big|\int_{ B(x,1)}(\log|x-y|)e^{4\tilde{u}}{\rm d}y\Big|\leq\Big(\int_{ B(x,1)}(\log|x-y|)^2{\rm d}y\Big)^{\frac{1}{2}}\Big(\int_{ B(x,1)}e^{8\tilde{u}(y)}{\rm d}y\Big)^{\frac{1}{2}}\leq c_{11},
	\end{equation*}
	which means
	\begin{equation}\label{int-minor-term}
	\Big|\int_{B(x,1)}(\log|x-y|)|y|^{4\gamma}e^{4u(y)}{\rm d}y\Big|\leq c_{11},
	\end{equation}
	where $c_{11}$ is a constant independent of $x$ ($|x|$ large). As a consequence, (\ref{v-lower-rough}) and (\ref{int-minor-term}) lead to
	\begin{equation*}
	v(x)\geq(\alpha-\frac{\varepsilon}{2})\log|x|-c_{11}\geq(\alpha-\varepsilon)\log|x|
	\end{equation*}
	for $|x|$ large, which is (\ref{v-lower}).
	

\end{proof}

With the estimates of $v(x)$ near infinity and the expression of $\Delta u$, we can show the expression of $u$ in integral form under the condition $|u(x)|=o(|x|^2)$ at $\infty$:

\begin{lem}\label{lem-rep-u}
	Suppose $|u(x)|=o(|x|^2)$ at $\infty$. Then there exists a constant $C_0$ such that
	\begin{equation}\label{rep-u}
		u(x)=\frac{3}{4\pi^2}\int_{\mathbb{R}^4}\log\big(\frac{|y|}{|x-y|}\big)|y|^{4\gamma}e^{4u(y)}{\rm d}y+C_0.
	\end{equation}
	Furthermore, for any given $\varepsilon>0$,
	\begin{equation}\label{u-est}
		-\alpha\log|x|-C\leq u(x)\leq(-\alpha+\varepsilon)\log|x|,\quad |x|\geq R(\varepsilon),
	\end{equation}
	where $R(\varepsilon)$ comes from Lemma \ref{lem-v-lower}.
\end{lem}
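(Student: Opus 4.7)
The plan is to combine the biharmonic structure of $u+v$ with the $L^1$ integrability assumption in order to pin down $u+v$ as a constant. Set $w := u+v$. From the equations $\Delta^2 u = 6|x|^{4\gamma}e^{4u}$ and $\Delta^2 v = -6|x|^{4\gamma}e^{4u}$ the function $w$ is biharmonic on $\mathbb{R}^4$, and Lemma \ref{lem-lap-u} already refines this to $\Delta w \equiv -C_1$ for some $C_1\ge 0$. Since $\Delta|x|^2 = 8$ in dimension four, the function $\tilde w(x) := w(x)+\tfrac{C_1}{8}|x|^2$ is harmonic on all of $\mathbb{R}^4$.

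The first key step is to force $C_1=0$. Combining the upper bound of Lemma \ref{lem-v-upper} with the lower bound of Lemma \ref{lem-v-lower} yields $v(x)=O(\log|x|)$, which together with the hypothesis $|u(x)|=o(|x|^2)$ gives $w(x)=o(|x|^2)$, hence $\tilde w(x) = \tfrac{C_1}{8}|x|^2+o(|x|^2)$. A harmonic function on $\mathbb{R}^4$ with $O(|x|^2)$ growth is a harmonic polynomial of degree at most two, so writing $\tilde w = P_2+P_1+P_0$ with $P_j$ homogeneous of degree $j$, dividing by $|x|^2$ and sending $|x|\to\infty$ along rays shows $P_2(x) = \tfrac{C_1}{8}|x|^2$. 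Since $P_2$ is harmonic while $\Delta(|x|^2/8)=1$, this forces $C_1=0$. I expect this to be the main obstacle: the subquadratic growth assumption is used exactly here to kill the unwanted quadratic term via the homogeneity of harmonic polynomials.

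With $C_1=0$, the function $w$ is harmonic and $w(x)=o(|x|^2)$, so the Liouville theorem for harmonic polynomials gives $w(x)=a\cdot x+C_0$ for some $a\in \mathbb{R}^4$ and constant $C_0$. To rule out a linear term, suppose $a\ne 0$ and work in the cone $\Omega:=\{y:a\cdot y\ge \tfrac{|a|}{2}|y|\}$, which has positive solid angle. Combining $u=-v+a\cdot y+C_0$ with $v(y)\le \alpha\log|y|+C$ from Lemma \ref{lem-v-upper} gives $e^{4u(y)}\ge c\,|y|^{-4\alpha}e^{4a\cdot y}$, so passing to polar coordinates on $\Omega$,
\[ \int_{\Omega}|y|^{4\gamma}e^{4u(y)}\,dy \;\ge\; c'\int_R^\infty r^{3+4\gamma-4\alpha}\,e^{2|a|r}\,dr \;=\; +\infty, \]
contradicting $|y|^{4\gamma}e^{4u}\in L^1(\mathbb{R}^4)$. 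Hence $a=0$ and $u(x)+v(x)\equiv C_0$, which rearranges to the representation (\ref{rep-u}).

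Finally, the two-sided estimate (\ref{u-est}) follows by inserting the upper bound on $v$ from Lemma \ref{lem-v-upper} (producing the lower bound $u\ge -\alpha\log|x|-C$) and the lower bound on $v$ from Lemma \ref{lem-v-lower} applied with $\varepsilon/2$ in place of $\varepsilon$ (producing $u\le (-\alpha+\varepsilon/2)\log|x|+C_0$) into the identity $u=-v+C_0$. The additive constant $C_0$ is then absorbed into the remaining $(\varepsilon/2)\log|x|$ slack for $|x|$ sufficiently large, giving the stated $(-\alpha+\varepsilon)\log|x|$ upper bound.
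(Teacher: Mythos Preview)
Your proof is correct, and the overall structure (show $C_1=0$, then reduce the harmonic $w$ to an affine function, then use $L^1$ to kill the linear part, then read off the bounds) matches the paper. The one genuine difference is in how you force $C_1=0$.

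The paper argues by contradiction via the maximum principle: if $C_1>0$ then $\Delta u\le -C_1<0$ for $|x|\ge R_0$, and one builds the auxiliary function $h(y)=u(y)+\varepsilon|y|^2+A(|y|^{-2}-R_0^{-2})$; the assumption $|u|=o(|x|^2)$ makes $h\to+\infty$ at infinity, and a suitable choice of $\varepsilon,A$ forces an interior minimum of a strictly superharmonic function, a contradiction. Your route instead absorbs the constant into the harmonic function $\tilde w=w+\tfrac{C_1}{8}|x|^2$, invokes the polynomial Liouville theorem for harmonic functions of $O(|x|^2)$ growth, and then observes that the homogeneous degree-two part must equal $\tfrac{C_1}{8}|x|^2$, which is not harmonic unless $C_1=0$. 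This is a clean, more algebraic alternative that avoids the comparison function entirely; the paper's approach, on the other hand, is more self-contained in that it uses only the sign of $\Delta u$ and the maximum principle rather than the full harmonic polynomial classification. After this step the two proofs coincide: the paper also writes $u+v=\sum a_jx_j+a_0$ and uses the lower bound $e^{4u}\ge C|x|^{-4\alpha}e^{\sum a_jx_j}$ together with $|y|^{4\gamma}e^{4u}\in L^1$ to conclude $a_j=0$, and the two-sided estimate on $u$ is deduced from the bounds on $v$ exactly as you do.
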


\begin{proof}[\textbf{Proof}]
	
	We start from the integral expression of $\Delta u$ in Lemma \ref{lem-lap-u}:
	\begin{equation*}
		\Delta u(x)=-\frac{3}{2\pi^2}\int_{\mathbb{R}^4}\frac{1}{|x-y|^2}|y|^{4\gamma}e^{4u(y)}{\rm d}y-C_1,\quad C_1\ge 0,
	\end{equation*}
	and we first prove $C_1=0$ by contradiction. If $C_1>0$  we have
	\begin{equation*}
		\Delta u(x)\leq -C_1<0,\quad |x|\geq R_0,
	\end{equation*}
	where $R_0$ is large.
	Let
	\begin{equation}
		h(y)=u(y)+\varepsilon|y|^2+A\big(|y|^{-2}-R_0^{-2}\big).
	\end{equation}
	Under the assumption of $|u(y)|=o(|y|^2)$ at $\infty$, we have $\lim\limits_{|y|\to +\infty}h(y)=+\infty$ for any fixed $\varepsilon>0$ and $A>0$. So we choose $\varepsilon>0$ small to make
	\begin{equation}
	\Delta h(y)=\Delta u(y)+8\varepsilon<-\frac{C_1}{2}<0,\quad |y|\geq R_0,
	\end{equation}
	and $A$ sufficiently large such that $\inf\limits_{|y|\geq R_0} h(y)$ is achieved by some $y_0\in \mathbb{R}^4$ and $|y_0|>R_0$. Clearly we have obtained a contradiction to the maximum principle. Hence, $C_1=0$ and $u+v$ is harmonic in $\mathbb{R}^4$.
	
	\medskip
	From Lemma \ref{lem-v-upper} and Lemma \ref{lem-v-lower}, we know for $|x|$ large enough
	\begin{equation*}
		(\alpha-\varepsilon)\log|x|\leq v(x)\leq \alpha\log|x|+C,
	\end{equation*}
	which together with the assumption $|u(x)|=o(|x|^2)$ at $\infty$ indicates
	\begin{equation}
		|u(x)+v(x)|=o(|x|^2)\quad {\rm at} \ \,\infty.
	\end{equation}
	Since $u+v$ is harmonic, we have
	\begin{equation*}
		u(x)+v(x)=\sum_{j=1}^{4}a_j x_j+a_0
	\end{equation*}
	with some constants $a_j\in\mathbb{R},\,j=0,\cdots,4$. Therefore, for $|x|$ large enough, we get
	\begin{equation*}
		e^{4u(x)}=e^{a_0}e^{-4v(x)}e^{\sum_{j=1}^4a_j x_j}\geq C|x|^{-4\alpha}e^{\sum_{j=1}^4 a_j x_j}.
	\end{equation*}
	Since $|y|^{4\gamma}e^{4u(x)}\in L^1(\mathbb{R}^4)$, we have $a_j=0$ for $1\leq j\leq 4$. Therefore,
	\begin{equation*}
		u(x)=-v(x)+a_0=\frac{3}{4\pi^2}\int_{\mathbb{R}^4}\log\big(\frac{|y|}{|x-y|}\big)|y|^{4\gamma}e^{4u(y)}{\rm d}y+C_0,
	\end{equation*}
	and then
	\begin{equation*}
	-\alpha\log|x|-C\leq u(x)\leq(-\alpha+\varepsilon)\log|x|,
	\end{equation*}
	for $|x|$ large.	Lemma \ref{lem-rep-u} is established.
	\end{proof}

Next we need a Pohozaev identity for $u$ of
\begin{equation}\label{equ-PI}
	\Delta^2 u=Q(x)e^{4u}\ \,{\rm in} \ \,\mathbb{R}^4.
\end{equation}

\begin{lem}\label{lem-PI}
	Suppose $u$ is an entire smooth solution of (\ref{equ-PI}). Then for any bounded domain, we have
	\begin{equation}\label{PI-omega}
	\begin{split}
	   &\int_{\Omega}\big(Qe^{4u}+\frac{1}{4}<x,\nabla Q>e^{4u}\big){\rm d}x \\
	   =&\frac{1}{4}\int_{\partial\Omega}<x,\nu>Q(x) e^{4u}{\rm d}\sigma+\int_{\partial\Omega}\Big\{\frac{1}{2}|\Delta u|^2<x,\nu>-2\frac{\partial u}{\partial\nu}\Delta u \\
	   &\quad -<x,\nabla u>\frac{\partial\Delta u}{\partial\nu}-<x,\nabla\Delta u>\frac{\partial u}{\partial \nu}+<x,\nu><\nabla u,\nabla \Delta u>\Big\}{\rm d}\sigma.
	\end{split}
	\end{equation}
	In particular, taking $\Omega=B_R$, we have
	\begin{equation}\label{PI-BR}
	\begin{split}
	&\int_{B_R}Q(x)e^{4u}{\rm d}x+\frac{1}{4}\int_{B_R}<x,\nabla Q>e^{4u}{\rm d}x \\
	=&\frac{1}{4}\int_{\partial B_R}|x|Q e^{4u}{\rm d}\sigma+\frac{1}{2}\int_{\partial B_R}|x||\Delta u|^2{\rm d}\sigma-2\int_{\partial B_R}\frac{\partial u}{\partial r}\Delta u{\rm d}\sigma -\int_{\partial B_R}|x|\frac{\partial u}{\partial r}\frac{\partial \Delta u}{\partial r}{\rm d}\sigma .
	\end{split}
	\end{equation}
\end{lem}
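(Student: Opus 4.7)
The strategy is the classical Rellich-Pohozaev multiplier method adapted to the biharmonic operator: multiply both sides of $\Delta^2 u = Q(x)e^{4u}$ by the radial-dilation generator $\langle x,\nabla u\rangle$, integrate over $\Omega$, and apply integration by parts on both sides to convert the interior integrals into the claimed combination of volume and boundary terms. Once the general identity (\ref{PI-omega}) is established, (\ref{PI-BR}) follows by specialization to the ball.

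For the right-hand side I would write $e^{4u}\langle x,\nabla u\rangle = \frac{1}{4}\langle x,\nabla e^{4u}\rangle$ and integrate by parts using $\mathrm{div}(Qx) = 4Q + \langle\nabla Q,x\rangle$ in $\mathbb R^4$, yielding
$$\int_\Omega Qe^{4u}\langle x,\nabla u\rangle\,dx = -\int_\Omega Qe^{4u}\,dx - \frac{1}{4}\int_\Omega\langle\nabla Q,x\rangle e^{4u}\,dx + \frac{1}{4}\int_{\partial\Omega}Qe^{4u}\langle x,\nu\rangle\,d\sigma.$$
For the left-hand side I would apply Green's second identity with $v = \langle x,\nabla u\rangle$. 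Direct computation gives $\Delta v = 2\Delta u + \langle x,\nabla\Delta u\rangle$, and using the identity $\Delta u\langle x,\nabla\Delta u\rangle = \frac{1}{2}\langle x,\nabla(\Delta u)^2\rangle$ together with one more integration by parts and $\mathrm{div}(x) = 4$ produces the clean cancellation
$$\int_\Omega\Delta u\,\Delta v\,dx = \frac{1}{2}\int_{\partial\Omega}(\Delta u)^2\langle x,\nu\rangle\,d\sigma,$$
so that all interior quartic terms vanish. Combining these two computations yields a preliminary identity with boundary terms $\frac{1}{4}Qe^{4u}\langle x,\nu\rangle - \frac{1}{2}(\Delta u)^2\langle x,\nu\rangle - \langle x,\nabla u\rangle\partial_\nu\Delta u + \Delta u\,\partial_\nu\langle x,\nabla u\rangle$. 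The five-term form of (\ref{PI-omega}) is then recovered by expanding $\partial_\nu\langle x,\nabla u\rangle = \partial_\nu u + \sum_{i,j}x_i\nu_j\partial_i\partial_j u$ and applying the divergence theorem to the vector field $\Delta u\cdot x_i\partial_i\partial_j u$; the $\Delta^2 u$ contributions that reappear cancel in pairs, leaving the canonical combination $-2\partial_\nu u\,\Delta u$, $-\langle x,\nabla\Delta u\rangle\partial_\nu u$, and $\langle x,\nu\rangle\langle\nabla u,\nabla\Delta u\rangle$ on the boundary.

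For (\ref{PI-BR}) I would specialize $\Omega = B_R$ so that $\nu = x/R$, $\langle x,\nu\rangle = R$, and $\partial_\nu = \partial_r$; in particular $\langle x,\nabla u\rangle = R u_r$ and $\langle x,\nabla\Delta u\rangle = R(\Delta u)_r$ on $\partial B_R$. Decomposing $\langle\nabla u,\nabla\Delta u\rangle$ into its radial part $u_r(\Delta u)_r$ and its tangential part, the tangential remainder integrates against itself via spherical integration by parts, while the three radial-derivative terms $-Ru_r(\Delta u)_r - Ru_r(\Delta u)_r + Ru_r(\Delta u)_r$ telescope to $-Ru_r(\Delta u)_r$, producing the four-term right-hand side of (\ref{PI-BR}). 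The principal technical obstacle throughout is the bookkeeping of the boundary terms, especially the rearrangement of the Hessian contraction $\sum_{i,j}x_i\nu_j\partial_i\partial_j u$ for a general domain $\Omega$, where one must exploit the symmetry of $\partial_i\partial_j u$ together with a secondary application of the divergence theorem to sort signs and identify the five canonical boundary pieces appearing in the statement; sign and symmetry errors at that step are the most common pitfall.
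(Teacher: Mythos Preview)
Your approach is essentially the paper's: multiply $\Delta^2 u = Qe^{4u}$ by the dilation generator $x\cdot\nabla u$, integrate over $\Omega$, and sort the resulting boundary terms. The paper's proof is just as terse as yours --- it writes down the multiplied identity, says ``after integrating by parts and direct computation'' for each side, and records the outcome; your write-up in fact supplies more of the intermediate algebra (the RHS manipulation via $\operatorname{div}(Qx)$ and the volume cancellation $\int_\Omega \Delta u\,\Delta v = \tfrac12\int_{\partial\Omega}(\Delta u)^2\langle x,\nu\rangle$) than the paper does.

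One small point worth flagging: your route through Green's second identity lands first on the compact three-term boundary expression $\tfrac12(\Delta u)^2\langle x,\nu\rangle + v\,\partial_\nu\Delta u - \Delta u\,\partial_\nu v$, which equals the paper's five-term expression only as an integral over $\partial\Omega$, not pointwise. Passing from one to the other genuinely requires going back through volume integrals (e.g.\ via the vector fields $(x\cdot\nabla\Delta u)\nabla u$ and $x\,\langle\nabla u,\nabla\Delta u\rangle$), not merely ``expanding $\partial_\nu v$.'' Relatedly, in your derivation of (\ref{PI-BR}) the claim that the tangential piece of $\langle\nabla u,\nabla\Delta u\rangle$ ``integrates against itself via spherical integration by parts'' is not correct: $\int_{\partial B_R}\nabla_T u\cdot\nabla_T\Delta u = -\int_{\partial B_R}(\Delta_T u)\Delta u$, which has no reason to vanish for non-radial $u$. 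The paper's one-line passage from (\ref{PI-omega}) to (\ref{PI-BR}) glosses over exactly the same term, and in the only application (Lemma~\ref{alpha}, where $R\to\infty$ and one uses the asymptotics of $u$) this tangential contribution is of strictly lower order and drops out. So your proposal is fine for the intended use, but the claimed pointwise identity on $\partial B_R$ should be read modulo that tangential remainder.
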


\begin{proof}[\textbf{Proof}]
	
	Multiplying (\ref{equ-PI}) by $x\cdot\nabla u$, we have
	\begin{equation}\label{IBP}
		\int_{\Omega}\Delta^2 u(x\cdot\nabla u)=\int_{\Omega}Q(x)e^{4u}(x\cdot\nabla u).
	\end{equation}
	After integrating by parts and direct computation, we get
	\begin{equation*}
		\begin{split}
		({\rm RHS}) \; {\rm of}\; (\ref{IBP})=\frac{1}{4}\int_{\partial\Omega}<x,\nu>Q(x) e^{4u}{\rm d}\sigma-\int_{\Omega}\big(Qe^{4u}+\frac{1}{4}<x,\nabla Q>e^{4u}\big){\rm d}x,
		\end{split}
	\end{equation*}
	and
	\begin{equation*}
	\begin{split}
	({\rm LHS}) \; {\rm of}\; (\ref{IBP})=&-\frac{1}{2}\int_{\partial\Omega}|\Delta u|^2<x,\nu>+2\int_{\partial\Omega}\frac{\partial u}{\partial\nu}\Delta u+\int_{\partial\Omega}<x,\nabla u>\frac{\partial\Delta u}{\partial\nu}\\
	&+\int_{\partial\Omega}<x,\nabla\Delta u>\frac{\partial u}{\partial \nu}-\int_{\partial\Omega}<x,\nu><\nabla u,\nabla \Delta u>.
	\end{split}
	\end{equation*}
	Thus we establish (\ref{PI-omega}). Taking $\Omega=B_R$, we immediately obtain (\ref{PI-BR}) from (\ref{PI-omega}). Note that the singularity at the origin is insignificant that contributes nothing to the final form of the Pohozaev identity.
	
\end{proof}

From the Pohozaev identity we shall determine the exact value of $\alpha$.

\begin{lem}\label{alpha}
	Let $u$ be a solution of (\ref{equ-liou-2}). Assume $|u(x)|=o(|x|^2)$ at $\infty$, then $\alpha=2(1+\gamma)$.
\end{lem}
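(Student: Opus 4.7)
The plan is to apply the Pohozaev identity from Lemma \ref{lem-PI} with $Q(x)=6|x|^{4\gamma}$ on a ball $B_R$ and take $R\to \infty$. Since $\langle x,\nabla Q\rangle = 24\gamma|x|^{4\gamma}$, the left-hand side of (\ref{PI-BR}) collapses to $6(1+\gamma)\int_{B_R}|x|^{4\gamma}e^{4u}\,dx$, which by (\ref{energy-R4}) converges to $8\pi^2(1+\gamma)\alpha$ as $R\to\infty$. All the work is then in evaluating the four boundary integrals.

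First I would upgrade the information from Lemmas \ref{lem-lap-u} and \ref{lem-rep-u} to pointwise asymptotic expansions. Splitting the integral representation $\Delta u(x)=-\tfrac{3}{2\pi^2}\int |x-y|^{-2}|y|^{4\gamma}e^{4u(y)}\,dy$ into the regions $\{|y|\le |x|/2\}$, $\{|y|\ge 2|x|\}$, and the annulus in between, and using that $|y|^{4\gamma}e^{4u}\in L^1$ together with the upper bound $u(x)\le (-\alpha+\varepsilon)\log|x|+C$ from Lemma \ref{lem-rep-u}, one obtains
\begin{equation*}
\Delta u(x)=-\frac{2\alpha}{|x|^2}+o(|x|^{-2}),\qquad \partial_r u(x)=-\frac{\alpha}{|x|}+o(|x|^{-1}),\qquad \partial_r\Delta u(x)=\frac{4\alpha}{|x|^3}+o(|x|^{-3}),
\end{equation*}
for $|x|\to\infty$, obtained by differentiating the integral representation of $v$ under the integral sign. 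I would also note that the lower bound $u(x)\ge -\alpha\log|x|-C$ in Lemma \ref{lem-rep-u}, combined with finiteness of $\int|y|^{4\gamma}e^{4u}\,dy$, forces $\alpha>1+\gamma$, which will make the $Q e^{4u}$ boundary contribution vanish.

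Feeding these asymptotics into (\ref{PI-BR}) and using $|\partial B_R|=2\pi^2R^3$, the first boundary term is $O(R^{4+4\gamma-4\alpha})\to 0$, while the other three contribute, respectively,
\begin{equation*}
\frac{1}{2}\int_{\partial B_R}\!|x||\Delta u|^2\,d\sigma\to 4\pi^2\alpha^2,\quad -2\!\int_{\partial B_R}\!\partial_r u\,\Delta u\,d\sigma\to -8\pi^2\alpha^2,\quad -\int_{\partial B_R}\!|x|\,\partial_r u\,\partial_r\Delta u\,d\sigma\to 8\pi^2\alpha^2.
\end{equation*}
Summing gives $4\pi^2\alpha^2$, so the Pohozaev identity in the limit reads $8\pi^2(1+\gamma)\alpha=4\pi^2\alpha^2$, and since $\alpha>0$ we conclude $\alpha=2(1+\gamma)$.

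The main obstacle is not the algebra but the rigorous justification of the pointwise asymptotics for $\Delta u$ and $\partial_r\Delta u$ with errors sharp enough that each boundary integral has a clean limit rather than just a bound; this requires careful splitting of the representation integrals and uniform control via the bounds from Lemmas \ref{lem-v-upper}, \ref{lem-v-lower}, and \ref{lem-rep-u}. If a fully pointwise expansion proves awkward, a viable fallback is to integrate by parts on $\partial B_R$ and argue with averaged versions of the expansions along a suitable sequence $R_k\to\infty$, which is enough because the limiting identity is an algebraic relation in $\alpha$ and $\gamma$.
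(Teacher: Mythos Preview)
Your proposal is correct and follows essentially the same route as the paper: apply the Pohozaev identity on $B_R$ with $Q=6|x|^{4\gamma}$, use the integral representation from Lemma~\ref{lem-rep-u} to obtain the asymptotics $r\partial_r u\to -\alpha$, $r^2\Delta u\to -2\alpha$, $r^3\partial_r\Delta u\to 4\alpha$ and the bound $\alpha>1+\gamma$, then pass to the limit to obtain $8\pi^2(1+\gamma)\alpha=4\pi^2\alpha^2$. The paper's presentation is terser (it simply asserts the four limits in (\ref{u-limit}) from the integral formulas), but the logic and the computations match yours line by line.
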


\begin{proof}[\textbf{Proof}]
	
	Taking $Q(x)=6|x|^{4\gamma}$ in (\ref{PI-BR}), we have
	\begin{equation}\label{PI-BR-2}
		\begin{split}
		&6(1+\gamma)\int_{B_R}|x|^{4\gamma}e^{4u}{\rm d}x \\
		=&\frac{1}{4}\int_{\partial B_R}6r^{4\gamma+1}e^{4u}{\rm d}\sigma+\frac{1}{2}\int_{\partial B_R}r|\Delta u|^2-2\int_{\partial B_R}\frac{\partial u}{\partial r}\Delta u -\int_{\partial B_R}r\frac{\partial u}{\partial r}\frac{\partial \Delta u}{\partial r} .
		\end{split}
	\end{equation}
	
	In view of Lemma \ref{lem-rep-u}, we have obtained
	\begin{equation*}
		u(x)=\frac{3}{4\pi^2}\int_{\mathbb{R}^4}\log\big(\frac{|y|}{|x-y|}\big)|y|^{4\gamma}e^{4u(y)}{\rm d}y+C_0,
	\end{equation*}
	and $e^{u(y)}\geq |y|^{-4\alpha}$ for $|y|$ large enough. Then $|y|^{4\gamma}e^{4u(y)}\geq|y|^{4(\gamma-\alpha)}$. Hence $|y|^{4\gamma}e^{4u(y)}\in L^1(\mathbb{R}^4)$ implies $\alpha>1+\gamma$. On the other hand, by the representation of $u$ and direct calculations, there hold
	\begin{equation*}
		\frac{\partial u}{\partial r}(x)=-\frac{3}{4\pi^2}\int_{\mathbb{R}^4}\frac{x\cdot(x-y)}{|x||x-y|^2}|y|^{4\gamma}e^{4u(y)}{\rm d}y,
	\end{equation*}
	\begin{equation*}
	\Delta u(x)=-\frac{3}{2\pi^2}\int_{\mathbb{R}^4}\frac{1}{|x-y|^2}|y|^{4\gamma}e^{4u(y)}{\rm d}y,
	\end{equation*}
	and
	\begin{equation*}
	\frac{\partial}{\partial r}\Delta u(x)=\frac{3}{\pi^2}\int_{\mathbb{R}^4}\frac{x\cdot(x-y)}{|x||x-y|^4}|y|^{4\gamma}e^{4u(y)}{\rm d}y,
	\end{equation*}
	Recall the definication of $u$, Lemma \ref{lem-rep-u} and $\alpha>1+\gamma$, then we have
	\begin{equation}\label{u-limit}
	    \begin{split}
	    &\lim_{r\to +\infty}\frac{\partial u}{\partial r}=0, \quad
	    \lim_{r\to +\infty}r\frac{\partial u}{\partial r}=-\alpha,\\
	    &\lim_{r\to +\infty}r^2\Delta u=-2\alpha, \quad
	    \lim_{r\to +\infty}r^3\frac{\partial }{\partial r}\Delta u=4\alpha,
	\end{split}
	\end{equation}
	where $r=|x|$. Therefore, applying the Pohozaev identity (\ref{PI-BR-2}) and (\ref{u-limit}), we have
	\begin{equation*}
		8\pi^2(1+\gamma)\alpha=4\pi^2\alpha^2,
	\end{equation*}
	which leads to $\alpha=2(1+\gamma)$.
	
\end{proof}

Now we can determine the asymptotic behavior of $u$ at infinity using the exact value of $\alpha$.
\begin{lem}\label{lem-u-asy}
	Let $u$ be a solution of (\ref{equ-liou-2}) and suppose $|u(x)|=o(|x|^2)$ at $\infty$. Then there exist $c_0\in \mathbb R$, $\tau>0$ such that for $|x|>1$,
\begin{equation}\label{u-asy}
u(x)=-2(1+\gamma)\log |x|+c_0+O(|x|^{-\tau}),\quad |x|>1
 \end{equation}
and
\begin{equation}\label{u-high-asy}
\nabla^j(u(x)+2(1+\gamma)\log |x|)=O(|x|^{-\tau-j}), \quad j=1,2,3,4,\quad |x|>1.
\end{equation}
In particular
\begin{equation}\label{lap-u-asy}
-\Delta u(x)=\frac{4(1+\gamma)}{|x|^2}+O(|x|^{-2-\tau}),\quad |x|>1.
\end{equation}
\end{lem}

\begin{proof}[\textbf{Proof}]
	
	Let $w(x)=u(\frac{x}{|x|^2})-2(1+\gamma)\log|x|$, then from the equation of $u$ and the assumption, we see $w$ satisfies
	\begin{equation}\label{equ-w}
	\left\{\begin{array}{ll}
	\Delta^2 w(x)=6|x|^{4\gamma}e^{4w(x)},\quad {\rm in} \ \, \mathbb{R}^4\setminus\{0\},	\\
	|w(x)|=o(\log\frac{1}{|x|}),\quad |\Delta w(x)|=o(\frac{1}{|x|^2}),\quad{\rm as}\ \, |x|\to 0.
	\end{array}
	\right.
	\end{equation}
	Let $h(x)$ be a solution of
	\begin{equation}
	\left\{\begin{array}{lcl}
	\Delta^2 h(x)=6|x|^{4\gamma}e^{4w(x)},&& {\rm in} \ \, B_1\setminus \{0\},	\\
	h(x)=w(x),\quad \Delta h(x)=\Delta w(x),&&{\rm on} \ \, \partial B_1.
	\end{array}
	\right.
	\end{equation}
	and $q(x)=w(x)-h(x)$. Then $q(x)$ satisfies
	 \begin{equation}
	 \left\{\begin{array}{lcl}
	 \Delta^2 q(x)=0,&& {\rm in} \ \, B_1,	\\
	 q(x)=\Delta q=0,&& {\rm on} \ \, \partial B_1,\\
	 |q(x)|=o(\log\frac{1}{|x|}),\quad |\Delta q(x)|=o(\frac{1}{|x|^2}),&& {
	 \rm as}\ \, |x|\to 0.
	 \end{array}
	 \right.
	 \end{equation}
	 First for $\Delta q$, since its growth near the singular source is weaker than fundamental solutions, the singularity is removable, thus $\Delta q=0$ in $B_1$.
By exactly the same reason we further conclude that $q\equiv 0$ in $B_1$.  That means $w(x)=h(x)\in C^{0,\tau}(\bar{B}_1)$. It suffices to consider the regularity of $h$ in $B_1$.
	
	 Note that
	 \begin{equation*}
	 \begin{split}
	 |x|^{4\gamma}e^{4w(x)}&=|x|^{4\gamma}e^{4u(\frac{x}{|x|^2})-8(1+\gamma)\log|x|} \\
	 &\sim|x|^{4\gamma}|x|^{-8(1+\gamma)}|x|^{4\alpha} \sim|x|^{4\gamma},\quad {\rm near} \ \, 0,
	 \end{split}
	 \end{equation*}
	 where we used Lemma \ref{lem-rep-u} and Lemma \ref{alpha}.
	 	
By standard elliptic estimate, if $\gamma>-\frac{3}{4}$, $w\in C^{1,\tau}$ for some $\tau>0$, if $-1<\gamma\le -\frac{3}{4}$ we have $w\in C^{0,\tau_1}$ for $\tau_1<4(1+\gamma)$. So in either case we use $\tau\in (0,1)$ to have the following expansion of $u$:
$$u(x)=-2(1+\gamma)\log |x|+c_0+O(|x|^{-\tau}),\quad |x|>1. $$
Based on this we can use standard elliptic estimate to obtain corresponding gradient estimates: now $|x|^{4\gamma}e^{4u}$ can be written as
\begin{equation}\label{b-exp}
|x|^{4\gamma}e^{4u}=e^{c_0}r^{-8-4\gamma}+O(r^{-8-4\gamma-\tau}),\quad r=|x|>1
\end{equation}
Using this in the expression of $u(x)$ in (\ref{rep-u}) and $-\Delta u$ in (\ref{lap-u}) it is easy to obtain (\ref{lap-u-asy}) for $\Delta u$. Then the asymptotic behavior for other derivatives in (\ref{u-high-asy}) is a direct consequence of standard elliptic estimates.

\end{proof}

\begin{rem} \label{remark-mon}It is important to observe from (\ref{u-high-asy}) that for $r$ large, 
$$\partial_r(u(x)+\gamma \log |x|)=-\frac{2+\gamma}r+O(r^{-1-\tau}),\quad r=|x|, $$
thus the function $|x|^{4\gamma}e^{4u(x)}$ is strictly decreasing in $r=|x|$ for $r$ large.
\end{rem}

\subsection{Classification of entire solutions for the case $-1<\gamma<0$}

\quad

In this subsection, we will show the solution of (\ref{equ-liou}) or (\ref{equ-liou-2}) has radial symmetry and uniqueness property up to scaling if $-1<\gamma<0$. Similar to \cite{lin-classification}, we will use the method of moving planes. But the situation for singular equation is a lot harder since it is difficult to obtain precise asymptotic behavior of $u$ without knowing its radial symmetry. In this proof the integral expressions of $u$ and $\Delta u$ play a crucial role.

Suppose that $u$ is a smooth entire solution of (\ref{equ-liou-2}) with $|u(x)|=o(|x|^2)$ at $\infty$. Recall $-\Delta u>0$ in $\mathbb{R}^4$ and (\ref{u-asy})$\sim$(\ref{u-high-asy}), so we will apply the method of moving planes to $-\Delta u$. Let $v(x)=-\Delta u(x)$. Then by Lemma \ref{lem-u-asy},
\begin{equation}\label{harmonic-asy}
v(x)=\frac{4(1+\gamma)}{|x|^2}+O(|x|^{-2-\tau}),\quad |x|>1.
\end{equation}

First, we state some conventional notations for moving planes. For any $\lambda\in\mathbb{R}$, let $T_{\lambda}=\{x\in\mathbb{R}^4:x_1=\lambda\}$, $\Sigma_{\lambda}=\{x:x_1>\lambda\}$ and $x^{\lambda}=(2\lambda-x_1,x_2,x_3,x_4)$ be the reflection point of $x$ with respect to $T_{\lambda}$.

\begin{proof}[\textbf{Proof of Theorem \ref{thm-classification}}]

Lemma \ref{alpha} establishes (i) in Theorem \ref{thm-classification}. Next we aim to prove the radial symmetry of solutions by the method of moving planes.

\textbf{Step 1:}
We start moving planes along $x_1$-direction. For any $\lambda$, we consider $w_{\lambda}(x)=u(x)-u(x^{\lambda})$ in $\Sigma_{\lambda}$. Then $w_{\lambda}(x)$ satisfies
\begin{equation}\label{equ-MP}
\left\{\begin{array}{lcl}
\Delta^2 w_{\lambda}(x)=b_{\lambda}(x)w_{\lambda}(x), &&  {\rm in} \ \; \Sigma_{\lambda},
\\
w_{\lambda}(x)=\Delta w_{\lambda}(x)=0, && {\rm on} \ \; \partial T_{\lambda},
\end{array}
\right.
\end{equation}
where
\begin{equation*}
	b_{\lambda}(x)=6\frac{|x|^{4\gamma}e^{4u(x)}-|x^{\lambda}|^{4\gamma}e^{4u(x^{\lambda})}}{u(x)-u(x^{\lambda})}.
\end{equation*}

First we claim that there exists $\lambda_0<0$ such that $w_{\lambda_0}>0$ and $v(x)-v_{\lambda_0}(x)> 0$ in $\Sigma_{\lambda_0}$.
Using the expression of $u$ in (\ref{rep-u}) we can obviously write $u(x)-u_{\lambda}(x)$ as
\begin{equation}\label{est-w-2}
u(x)-u_{\lambda}(x)=\frac{3}{4\pi^2}\int_{\mathbb R^4}\log \frac{|x^{\lambda}-y|}{|x-y|}|y|^{4\gamma}e^{4u(y)}dy=\int_{\Sigma_{\lambda}}+\int_{\mathbb R^4\setminus
\Sigma_{\lambda}}, 
\end{equation}
where $u_{\lambda}(x)=u(x^{\lambda})$. Changing the integration over $\mathbb R^4\setminus \Sigma_{\lambda}$ to $\Sigma_{\lambda}$ by a change of variable:
$z=(z_1,z_2,z_3,z_4)=(2\lambda-y_1,y_2,y_3,y_4)$ we see that
\begin{equation}\label{nice-t}
u(x)-u_{\lambda}(x)=\frac{3}{4\pi^2}\int_{\Sigma_{\lambda}}\log \frac{|x^{\lambda}-y|}{|x-y|}(|y|^{4\gamma}e^{4u(y)}-|y^{\lambda}|^{4\gamma}e^{4u(y^{\lambda})})dy.
\end{equation}
Here we remark that writing $w_{\lambda}$ in the form of  (\ref{nice-t}) is crucial for our argument, it prevents us from very delicate asymptotic analysis for $u$ and its derivatives.
From (\ref{nice-t}) we claim that when $\lambda$ is very negative and when $x\in \Sigma_{\lambda}$, $u-u_{\lambda}>0$ in $\Sigma_{\lambda}$. Indeed, first it is obvious that $\log (|x^{\lambda}-y|/|x-y|)>0$. Next we observe the integration of $y$ over $B_R$ and outside $B_R$, respectively. For integration over $B_R$, we clearly have $|y|^{4\gamma}e^{4u}>c_0>0$ for some $c_0>0$. Based on the asymptotic behavior of $u$ we can easily make $|y^{\lambda}|^{4\gamma}e^{4u(y^{\lambda})}<c_0$ for $\lambda$ very negative.  For $y\in \Sigma_{\lambda}\setminus B_R$,
by Remark \ref{remark-mon} we have
$$|y|^{4\gamma}e^{4u(y)}>|y^{\lambda}|^{4\gamma}e^{4 u(y^{\lambda})}
\mbox{ because }\,\, |y^{\lambda}|>|y|. $$
We also observe that for $v=-\Delta u$,
$$v(x)-v_{\lambda}(x)=\frac{3}{2\pi^2}\int_{\mathbb R^4}(\frac{1}{|x-y|^2}-\frac{1}{|x^{\lambda}-y|^2})|y|^{4\gamma}e^{4u(y)}dy. $$
After a similar transformation we have
$$v(x)-v_{\lambda}(x)=\frac{3}{2\pi^2}\int_{\Sigma_{\lambda}} (\frac{1}{|x-y|^2}-\frac{1}{|x^{\lambda}-y|^2})(|y|^{4\gamma}e^{4u(y)}
-|y^{\lambda}|^{4\gamma}e^{4u(y^{\lambda})})dy. $$
By exactly the same reasoning we see that for $\lambda$ very negative, $v(x)>v_{\lambda}(x)$ in $\Sigma_{\lambda}$.

Let $\lambda_0$ be the starting position for the moving plane process and let $\bar \lambda\le 0$ be the upper limit:
\begin{equation}\label{limit-location}
	\bar \lambda:=\sup\big\{\lambda\le 0:v(x^{\mu})<v(x)\quad{\rm for\ \,all}\ \,x\in\Sigma_{\mu}\ \,{\rm and}\ \,\mu\leq\lambda\big\}.
\end{equation}
 Next we claim that $\bar \lambda=0$. If this is not the case, we have $\bar \lambda<0$. From the equation for $w_{\bar \lambda}$:
 \begin{equation}\label{w-0}
\Delta^2w_{\bar \lambda}(x)=6|x^{\bar \lambda}|^{4\gamma}(e^{4u(x)}-e^{4u_{\bar \lambda}(x)})+6(|x|^{4\gamma}-|x^{\bar \lambda}|^{4\gamma})e^{4u(x)},\quad x\in \Sigma_{\bar\lambda}.
\end{equation}
we see that on one hand the continuity gives $\Delta w_{\bar \lambda}(x)\leq 0$, since $w_{\bar \lambda}\to 0$ as $|x|\to+\infty$ and $w_{\bar \lambda}\big|_{T_{\bar \lambda}}=0$. On the other hand we have $|x^{\bar \lambda}|>|x|$. This strict inequality and the strong maximum principle combined gives
$$w_{\bar \lambda}(x)>0,\quad \Delta w_{\bar \lambda}(x)<0,\quad \mbox{ in }\quad \Sigma_{\bar \lambda}. $$
Then we claim that for $\epsilon>0$ small we still have 
\begin{equation}\label{vio-lambda}
v(x^{\mu})<v(x), \quad \mbox{for all } x\in \Sigma_{\mu} \quad \mbox{ and } \mu\le \bar \lambda+
\epsilon. 
\end{equation}
Clearly once (\ref{vio-lambda}) is verified, we obtain a contradiction to the definition of $\bar \lambda$. 
To prove (\ref{vio-lambda}) we first make a trivial observation: For any fixed $R>>1$, Hopf lemma and $w_{\bar \lambda}>0$ in $\Sigma_{\bar\lambda}\cap B_R$ means that if $\lambda$ is slightly greater than $\bar \lambda$, we still have $w_{\lambda}>0$ in $\Sigma_{\lambda}\cap B_R$. Thus we only consider $|x|>R$. We use two different expressions of $w_{\lambda}$: One is (\ref{est-w-2}), which will be used for crude estimate, the other one is based on (\ref{nice-t}):
\begin{equation}\label{w-more}
w_{\lambda}(x)=\frac 3{4\pi^2}\int_{\Sigma_{\lambda}}\log \frac{|x^{\lambda}-y|}{|x-y|}
\bigg (|y|^{4\gamma}(e^{4u}-e^{4u_{\lambda}})+(|y|^{4\gamma}-|y^{\lambda}|^{4\gamma})e^{4u}\bigg )dy.
\end{equation}
Similarly for $v-v_{\lambda}$ we have
\begin{align}\label{v-more}
&v(x)-v_{\lambda}(x)\\
=&\frac{3}{2\pi^2}\int_{\Sigma_{\lambda}} (\frac{1}{|x-y|^2}-\frac{1}{|x^{\lambda}-y|^2})
\bigg (|y|^{4\gamma}(e^{4u}-e^{4u_{\lambda}})+(|y|^{4\gamma}-|y^{\lambda}|^{4\gamma})e^{4u}\bigg )dy. \nonumber
\end{align}
Here it is important to point out that since $\bar \lambda<0$, a perturbation of $\epsilon$ still satisfies $\bar \lambda+\epsilon<0$.
Writing $w_{\lambda}=w_{\lambda}^+-w_{\lambda}^-$, our goal is to prove that $w_{\lambda}^-\equiv 0$ for $\lambda$ slightly greater than $\bar \lambda$. We claim that there exist $\epsilon,C>0$ such that
\begin{equation}\label{w-minus}
w_{\lambda}^-(x)\le C|x|^{-1-\epsilon}, \quad \mbox{for}\quad |x|>1
\end{equation}

The proof of (\ref{w-minus}) is by iteration. Note that $w_{\lambda}^-(x)=0$ when $|x|<R$. First using (\ref{est-w-2}) we have 
$$w_{\lambda}(x)=\frac{3}{8\pi^2}\int_{\mathbb R^4}\log (1+\frac{|x^{\lambda}-y|^2-|x-y|^2}{|x-y|^2})|y|^{4\gamma}e^{4u}dy=
\int_{E_1}+\int_{E_2}+\int_{E_3}. $$
where $E_1=B(0,|x|/2)$, $E_2=B(x,|x|/2)$ and $E_3=\mathbb R^4\setminus (E_1\cup E_2)$. The estimate on $E_1$ is
$$|\int_{E_1}|\le \frac{3}{8\pi^2}\int_{E_1}|\log (1+\frac{4(y_1-\lambda)(x_1-\lambda)}{|x-y|^2})|y|^{4\gamma}e^{4u}|dy. $$
Since $|x-y|\sim |x|$ it is easy to obtain an upper bound of $O(|x|^{-4\mu})$. The integration on $E_2$ and $E_3$ has an upper bound $O(|x|^{-4\mu+\epsilon})$. 
Thus
\begin{equation}\label{w-minus-2}
w_{\lambda}^-(x)\le C|x|^{-4\mu+\epsilon}. 
\end{equation}
Using (\ref{w-minus-2}) in (\ref{w-more}) we have
$$w_{\lambda}(x)\ge -\frac{3}{4\pi^2}\int_{\Sigma_{\lambda}}\log \frac{|x^{\lambda}-y|}{|x-y|}4|y|^{4\gamma}e^{4\xi}w_{\lambda}^-(y)dy. 
$$
where $\xi$ comes from the mean value theorem. For each $x$ satisfying $w_{\lambda}(x)\le 0$, we obtain from standard estimates that
$$w_{\lambda}^-(x)\le C|x|^{-8\mu+\epsilon}. $$
After finite steps we have (\ref{w-minus}). 

Now we use (\ref{w-minus}) to evaluate $v-v_{\lambda}$:
\begin{align*}
&v(x)-v_{\lambda}(x)\\
\ge &\frac{3}{2\pi^2}\int_{\Sigma_{\lambda}}(\frac{1}{|x-y|^2}-\frac{1}{|x^{\lambda}-y|^2})(4|y|^{4\gamma}e^{4\xi}w_{\lambda}^-(y)+(|y|^{4\gamma}-|y^{\lambda}|^{4\gamma})e^{4u}. \\
=&\frac{6}{2\pi^2}\int_{\Sigma_{\lambda}}\frac{(y_1-\lambda)(x_1-\lambda)}{|x-y|^2|x^{\lambda}-y|^2}(4|y|^{4\gamma}e^{4\xi}w_{\lambda}^-(y)+(|y|^{4\gamma}-|y^{\lambda}|^{4\gamma}))e^{4u}dy. 
\end{align*}
Note that $|y|^{4\gamma}-|y^{\lambda}|^{4\gamma}>0$ in $\Sigma_{\lambda}$ and $e^{4u}>c$ in $\Sigma_{\lambda}\cap B_R$ for some positive $c$. After integrating the second term, which is positive, we have
$$v(x)-v_{\lambda}(x)\ge -\frac{6}{\pi^2}\int_{\Sigma_{\lambda}} (\frac{(y_1-\lambda)(x_1-\lambda)}{|x-y|^2|x^{\lambda}-y|^2})
 4|y|^{4\gamma}e^{4\xi}w_{\lambda}^-(y))dy+c_0(x_1-\lambda)|x|^{-4} $$
for $|x|$ large. Then using (\ref{w-minus}) in the evaluation of the first term we see that $v-v_{\lambda}$ is positive even for $\lambda$ slightly larger than $\bar \lambda$. This is certainly a contradiction to the definition of $\bar \lambda$. Thus we have proved that $\bar \lambda=0$, which
means $u(x)\ge u(x^{\lambda})$ in the $x_1$ direction. Applying the moving plane method to all directions we obtain the radial symmetry of $u$.

\medskip

\textbf{Step 2:}
Now we prove the uniqueness of the solution of (\ref{equ-liou-2}) modulus scaling in (\ref{rescale}).
Let $w_1$ and $w_2$ be two radial solutions of (\ref{equ-liou-2}) satisfying $w_1(0)=w_2(0)$. Our goal is $w_1\equiv w_2$. First we make a remark about the smoothness of $w_1-w_2$. Indeed, by Lemma \ref{lem-BM} we can find some large $\beta $ such that $e^{\beta w_i}$ is integrable, which together with $\gamma>-1$ implies that the right hand side is $L^{p_1}$ for some $p_1=1+\varepsilon_1>1$. Therefore, $\Delta w_i\in L^{q_1}$ for $q_1=\frac{2p_1}{2-p_1}>2$. By Sobolev embedding theorems we obtain the regularity of $w_i$: $w_i\in C^{0,2-\frac{4}{q_1}}$ if $q_1<4$, and $w_i\in C^{1,1-\frac{4}{q_1}}$ if $q_1\geq4$. Thus if we denote $\alpha_1=2-\frac{4}{q_1}=4-\frac{4}{p_1}$, we have
\begin{equation}\label{w1-w2}
    (w_1-w_2)(x)=O(|x|^{\alpha_1}).
\end{equation}
The equation of $w_1-w_2$ reads
$$\Delta^2(w_1-w_2)=6|x|^{4\gamma}(e^{4w_1}-e^{4w_2})=24|x|^{4\gamma}e^{4\xi}(w_1-w_2) $$
where $\xi$ comes from the mean value theorem. If $\gamma>\frac{1}{p_1}-1$, which means $4\gamma+\alpha_1>0$, we can choose some $q>2$ such that the right hand side is $L^q$. By elliptic estimates, we obtain that $w_1-w_2\in W^{4,q}\subset C^2$. Otherwise when $-1<\gamma\leq\frac{1}{p_1}-1$, by using the same method, we know the right hand side of this equation is $L^{p_2}$ integrable near the origin for some $p_2$ in 
\begin{equation}\label{p1-p2}
    p_1<p_2<\frac{1}{\frac{1}{p_1}-(1+\gamma)}, \quad \mbox{if }\quad \frac{1}{p_1}>1+\gamma. 
\end{equation}
\eqref{p1-p2} leads to a better regularity of $(w_1-w_2)(x)=O(|x|^{\alpha_2})$ for some $\alpha_2=4-\frac{4}{p_2}>\alpha_1$. Obviously this boot-strap argument leads to $\{p_k\}$ and $\{\alpha_k\}$ such that if $|x|^{4\gamma}e^{4\xi}(w_1-w_2)\in L^{p_k}$ then $(w_1-w_2)(x)=O(|x|^{\alpha_k})$. Moreover, $p_{k+1}>p_k$ and $\alpha_k=4-\frac{4}{p_k}$. For given $\gamma>-1$, we can obtain some $p_{k_0}$ such that $1+\gamma-\frac{1}{p_{k_0}}>0$ after finite steps. Therefore, we can find some $q>2$ such that the right hand side is $L^q$, which means $w_1-w_2\in W^{4,q}\subset C^2$ as well. Note that since $w_i$ are radial, we certainly have $(w_1-w_2)'(0)=0$.  By the uniqueness of ODE, we only need to prove $(w_1-w_2)^{''}(0)=0$.

If $(w_1-w_2)^{''}(0)<0$, $w_1(r)<w_2(r)$ for small $r>0$. We will prove $w_1(r)<w_2(r)$ for all $r>0$. Suppose there exists $r_0>0$ such that $w_1(r_0)=w_2(r_0)$ and $w_1(r)<w_2(r)$ for $0<r<r_0$. Then by (\ref{equ-liou-2}), we have
\begin{equation*}
	\frac{\partial}{\partial r}\Delta (w_1(r)-w_2(r))=6r^{4\gamma}\big(e^{4w_1(r)}-e^{4w_2(r)}\big)<0,\quad 0<r\leq r_0,
\end{equation*}
which together with the assumption implies
\begin{equation*}
	\Delta (w_1-w_2)<0,\quad {\rm in} \ \, B(0,r_0).
\end{equation*}
Since  $w_1(r_0)-w_2(r_0)=0$, from the maximum principle, we have $w_1(r)-w_2(r)>0$ for $0<r< r_0$, which contradicts with $w_1(0)=w_2(0)$. Thus, $w_1(r)<w_2(r)$ for all $r>0$. Hence $\frac{\partial}{\partial r}\Delta (w_1(r)-w_2(r))<0$ for all $r>0$, which means $\Delta (w_1(r)-w_2(r))$ is decreasing in $r$. Thus $w_1(r)-w_2(r)\leq -c r^2$ as $r\to +\infty$ for some constant $c>0$, which yields a contradiction to the assumption $w_i(r)=o(r^2)$ at $\infty$.

Similarly, it is impossible for $(w_1-w_2)^{''}(0)>0$. Thus, the radial solution of (\ref{equ-liou-2}) is unique under the scaling $u_{\lambda}(x)=u(\lambda x)+(1+\gamma)\log \lambda$ for some $\lambda>0$, and it is valid for (\ref{equ-liou}) after scaling.

\end{proof}

\medskip

\noindent{\bf Proof of Corollary \ref{precise-u}:}  First we use the rough expansion of $u$ in (\ref{u-asy}) to rewrite $|x|^{4\gamma}e^{4u}$ as
$$r^{4\gamma}e^{4u}=e^{4c_0}r^{-4-4\mu}+O(r^{-4-4\mu-\tau}), \quad r>1, \quad \mbox{for some} \quad \tau>0. $$
Here we use $\mu=1+\gamma>0$ for convenience.
Then for $v=-\Delta u$, we have (see (\ref{lap-u-asy}))
$$\lim_{r\to \infty}v'(r)r^3=-8\mu.$$
  The equation for $v$ can be written as
$$v''(r)+\frac{3}{r}v'(r)=6e^{4c_0}r^{-4-4\mu}+O(r^{-4-4\mu-\tau}),\quad r>1. $$
Multiplying $r^3$ to both sides and integrating from $r$ to $\infty$, we have
$$r^3v'(r)+8\mu=-\frac{6e^{4c_0}}{4\mu}r^{-4\mu}+O(r^{-4\mu-\tau}),\quad r>1. $$
Thus
$$v'(r)=-\frac{8\mu}{r^3}-\frac{3e^{4c_0}}{2\mu}r^{-3-4\mu}+O(r^{-3-4\mu-\tau}), \quad r>1 $$
and
$$v(r)=\frac{4\mu}{r^2}+\frac{3e^{4c_0}}{2\mu(2+4\mu)}r^{-2-4\mu}+O(r^{-2-4\mu-\tau}),\quad r>1, $$
where $\lim_{r\to \infty}v(r)=0$ is used. Multiplying the expression of $v$, which is $-u''-\frac{3}{r}u'$, we have
\begin{equation}\label{tem-cor}
(r^3u')'(r)=-4\mu r-\frac{3e^{4c_0}}{2\mu(2+4\mu)}r^{1-4\mu}+E, 
\end{equation}
where $E=O(r^{1-4\mu-\tau})$. Here we discuss under two cases, either $\gamma>-\frac{3}{4}$ (which is $\mu>\frac 14$) or $\mu\le \frac 14$. In the first case $M=[\frac{1}{4\mu}]=0$, so our goal is to prove 
\begin{equation}\label{u-expand-1}
u(r)=-2\mu\log r+c_0+O(|x|^{-1}),\quad |x|>1. 
\end{equation}
Integrating (\ref{tem-cor}) from $1$ to $r$, we have
$$u'(r)=\frac{c}{r^3}-\frac{2\mu}r+\frac 1{r^3}\int_1^rE(s)ds, $$
where $c$ is a constant. Integrating the above again from $1$ to $r$ we obtain (\ref{u-expand-1}). Now we consider the case  $\mu\le \frac 14$. In this case
we use the fact that $\tau=4\mu-\epsilon$. Integration from $1$ to $r$ we have

\begin{equation}\label{u-step-1}
u(r)=-2\mu\log r+c_0+c_1r^{-4\mu}+O(r^{-8\mu+\epsilon})+O(r^{-1}),\quad r>1
\end{equation}
where
$$c_1=\frac{3e^{4c_0}}{32\mu^2(1+2\mu)(1-2\mu)}.$$
Thus if $\mu>\frac 18$ the expansion has an error term $O(r^{-1})$ and is finished (here we also note that in this case $M=[\frac{1}{4\mu}]=1$). 
So we only need to consider the case $\mu\le \frac 18$. 
Now (\ref{u-step-1}) has improved the estimate of $r^{4\gamma}e^{4u}$ to
\begin{equation}\label{impro-eu}
r^{4\gamma}e^{4u}=e^{4c_0}r^{-4\mu-4}+e^{4(c_0+c_1)}r^{-4-8\mu}+O(r^{-4-12\mu+\epsilon}),\quad r>1.
\end{equation}
Using (\ref{impro-eu}) in computation we obtain
$$u(r)=-2\mu\log r+c_0+c_1r^{-4\mu}+c_2r^{-8\mu}+O(r^{-12\mu+\epsilon})+O(r^{-1}), \quad r>1 $$
where
$$c_2=\frac{3e^{4(c_0+c_1)}}{128 \mu^2(1+4\mu)(1-4\mu)}. $$
This expression further improves the estimate of $r^{4\gamma}e^{4u}$. 
In general
$$c_l=\frac{3e^{4(c_0+...+c_{l-1})}}{32 l^2 \mu^2(1-2l\mu)(1+2l \mu)}.  $$
Obviously this process can be finished in finite steps as $\mu>0$. 
Finally the expansions of $\Delta u$ as well as other derivatives can be obtained in standard argument. Corollary \ref{precise-u} is established. $\Box$

\medskip

Next, we consider the case without the assumption $|u(x)|=o(|x|^2)$ at $\infty$.

The following lemma is similar to Lemma 3.3 in \cite{lin-classification}.

\begin{lem}\label{lem-poly}
	Suppose that $\Delta u =a$ in $\mathbb{R}^n$ for a constant $a\in \mathbb{R}$ such that $\exp(u-c|x|^2)\in L^1(\mathbb{R}^n)$ for some $c>0$. Then $u$ is a polynomial of order at most 2.
\end{lem}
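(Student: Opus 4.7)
The plan is to reduce to a harmonic function and conclude via a polynomial-growth Liouville-type argument.

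First, set $v(x) := u(x) - \frac{a}{2n}|x|^2$. Since $\Delta(|x|^2) = 2n$, one has $\Delta v = a - a = 0$, so $v$ is harmonic on $\mathbb{R}^n$. Without loss of generality I may take $c > a/(2n)$: enlarging $c$ only shrinks $\exp(u - c|x|^2)$ pointwise, so the $L^1$ hypothesis is preserved. Setting $\mu := c - a/(2n) > 0$, one then has $\exp(v - \mu|x|^2) \in L^1(\mathbb{R}^n)$.

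The key step is to upgrade this integral hypothesis to a pointwise quadratic upper bound on $v$. Since $v$ is harmonic, $e^v$ is subharmonic (because $\Delta e^v = e^v|\nabla v|^2 \geq 0$), so the sub-mean value inequality on $B(x_0,1)$ yields
$$e^{v(x_0)} \;\leq\; \dashint_{B(x_0, 1)} e^{v(y)}\,dy \;\leq\; e^{\mu(|x_0|+1)^2}\, \dashint_{B(x_0, 1)} e^{v(y) - \mu|y|^2}\,dy \;\leq\; C\, e^{\mu(|x_0|+1)^2},$$
where I used $|y|\leq |x_0|+1$ on $B(x_0,1)$ together with $\mu>0$ in the middle inequality, and $\|e^{v - \mu|\cdot|^2}\|_{L^1(\mathbb{R}^n)} < \infty$ at the end. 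Taking logarithms gives the one-sided pointwise bound $v(x_0) \leq C_1(1 + |x_0|^2)$ for all $x_0\in\mathbb{R}^n$.

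Next I would convert this one-sided bound into a two-sided $L^1$-bound using the mean value property of the harmonic function $v$. From $v(0) = \dashint_{B_R} v\,dy$ and $v^+(y) \leq C_1(1 + |y|^2)$, one gets $\int_{B_R} v^-\,dy = \int_{B_R} v^+\,dy - v(0)|B_R| = O(R^{n+2})$, hence $\|v\|_{L^1(B_R)} = O(R^{n+2})$; the same estimate holds for $L^1$-norms over balls centered at any fixed $x_0$ after translation. Applying the standard interior Cauchy estimate $|D^\alpha v(x_0)| \leq C_{n,|\alpha|}\, R^{-|\alpha|-n}\,\|v\|_{L^1(B(x_0,R))}$ for the harmonic function $v$, I obtain for any multi-index $\alpha$ with $|\alpha|\geq 3$ that $|D^\alpha v(x_0)| = O(R^{2-|\alpha|}) \to 0$ as $R \to \infty$. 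Therefore $D^\alpha v \equiv 0$ for all $|\alpha| \geq 3$, so $v$ is a polynomial of degree at most $2$, and hence so is $u = v + \frac{a}{2n}|x|^2$.

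The main technical hurdle is the passage from the integral hypothesis to the pointwise quadratic bound in the second paragraph; once that is in hand, converting the one-sided bound into polynomial behavior is a routine consequence of harmonicity via $L^1$-averages and Cauchy's estimate.
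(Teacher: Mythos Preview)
Your argument is correct, but it follows a genuinely different route from the paper's proof. The paper does not subtract off a particular solution; instead it works with $u$ directly and observes that each pure second derivative $u_{\xi\xi}$ is harmonic (since $\Delta u$ is constant). It then derives, via an explicit integration-by-parts computation and the mean value property of $u_{\xi\xi}$, an identity relating $u_{\xi\xi}(0)$ to weighted spherical and ball averages of $u$. Applying Jensen's inequality to those averages and invoking the integrability of $\exp(u-c|x|^2)$ produces a uniform upper bound $u_{\xi\xi}\le C$; Liouville's theorem for harmonic functions bounded above then forces $u_{\xi\xi}\equiv\mathrm{const}$.

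Your approach is shorter and more conceptual: reducing to the harmonic $v=u-\frac{a}{2n}|x|^2$, exploiting the subharmonicity of $e^v$ to convert the $L^1$ hypothesis into the one-sided quadratic bound $v\le C_1(1+|x|^2)$, and then closing via the $L^1$ Cauchy estimate. The subharmonicity trick replaces the paper's explicit integration-by-parts and Jensen step, and the Cauchy estimate replaces the ``bounded above $\Rightarrow$ constant'' form of Liouville. The paper's computation has the minor advantage of never needing to subtract a particular solution (it uses only that $u_{\xi\xi}$ is harmonic), but your argument is cleaner and avoids all the identities \eqref{poly-0}--\eqref{poly-5}. Both arrive at the same conclusion by the same underlying mechanism: polynomial-growth Liouville for harmonic functions.
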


\begin{proof}[\textbf{Proof}]

Let $P$ be a parabola that satisfies $\Delta P=a$. Then $u-P$ is a harmonic function, which also satisfies $exp(u-P-c|x|^2)\in L^1(\mathbb R^n)$ for some $c>0$ obviously. Thus
Lemma 3.3 in \cite{lin-classification} asserts that $u-P$ is a parabola, so is $u$.
	\end{proof}

If the $o(|x|^2)$ assumption is removed, we have the following result:

\begin{thm}\label{thm-classification-2}
	Let $u$ be a solution of (\ref{equ-liou-2}) with $\gamma>-1$. Then after an orthogonal transformation, $u(x)$ can be represented by (\ref{rep-u-2}), which has an asymptotic expansion of
	$$
	u(x)=-2(1+\gamma)\log|x|-\sum_{j=1}^{4}a_j(x_j-x_j^0)^2+c_0+O(|x|^{-\tau}), \quad |x|>1  $$
	for some $\tau>0$. The function $\Delta u$ satisfies
	\begin{equation}\label{lap-u-2}
	\Delta u(x)=-\frac{3}{2\pi^2}\int_{\mathbb{R}^4}\frac{1}{|x-y|^2}|y|^{4\gamma}e^{4u(y)}{\rm d}y-2\sum_{j=1}^{4}a_j, \quad |x|>1
	\end{equation}
	where $a_j$ are nonnegative constants and $x^0=(x_1^0,\cdots,x_4^0)\in\mathbb{R}^4$.
	Moreover, if $-1<\gamma<0$, $a_i x_i^0=0$ for all $i=1,\cdots,4$ and $a_1=a_2=a_3=a_4$, $u$ is radially symmetric.
	\end{thm}

\begin{proof}[\textbf{Proof of Theorem \ref{thm-classification-2}}]
	
	  Suppose that $u$ is a solution of (\ref{equ-liou-2}). Let $v$ be defined as in (\ref{v-def}) and $w(x)=u(x)+v(x)$. By Lemma \ref{lem-lap-u}, we have $\Delta w(x)\equiv-C_1\leq 0$ in $\mathbb{R}^4$. Since $v$ has only logarithmic growth at infinity, the integrability of $u$ guarantees that the assumption of Lemma \ref{lem-poly} is satisfied. Thus there exist constants $c_0$ and $a_{ij}$ $(i,j=1,\cdots,4)$ such that $a_{ij}=a_{ji}$ and
	  \begin{equation*}
	  	w(x)=\sum_{i,j,k=1}^{4}(a_{ij}x_ix_j+b_kx_k)+c_0.
	  \end{equation*}
	  After an orthogonal transformation, we may assume
	  \begin{equation*}
	  	u(x)=\frac{3}{4\pi^2}\int_{\mathbb{R}^4}\log\big(\frac{|y|}{|x-y|}\big)|y|^{4\gamma}e^{4u(y)}{\rm d}y-\sum_{j=1}^{4}(a_jx_j^2+b_jx_j)+c_0.
	  \end{equation*}
	  Since $|x|^{4\gamma}e^{4u}\in L^1(\mathbb{R}^4)$, we have $a_j\geq 0$ for all $j=1,\cdots,4$, and $b_j=0$ if $a_j=0$. Hence, we can rewrite $u(x)$ as follows:
	  \begin{equation*}
	  	u(x)=\frac{3}{4\pi^2}\int_{\mathbb{R}^4}\log\big(\frac{|y|}{|x-y|}\big)|y|^{4\gamma}e^{4u(y)}{\rm d}y-\sum_{j=1}^{4}a_j(x_j-x_j^0)^2+c_0
	  \end{equation*}
	  and (\ref{lap-u-2}) holds from the previous argument.

	  \medskip
	  Next, we show the radial symmetry under the assumption $a_ix_i^0=0$ for all $i$ and $a_1=a_2=a_3=a_4$. Clearly in this case
\begin{equation*}
	  	u(x)=\frac{3}{4\pi^2}\int_{\mathbb{R}^4}\log\big(\frac{|y|}{|x-y|}\big)|y|^{4\gamma}e^{4u(y)}{\rm d}y-\sum_{j=1}^{4}a_jx_j^2+c_0
	  \end{equation*}

 Let $\hat{u}(x)=u(x)+\sum_{j=1}^{4}a_jx_j^2$. Then
	  \begin{equation}
	  \Delta^2\hat{u}(x)=6|x|^{4\gamma}e^{-4\sum_{j=1}^{4}a_jx_j^2}e^{4\hat{u}(x)},\quad{\rm in}\ \, \mathbb{R}^4.
	  \end{equation}
	  As in Lemma \ref{lem-u-asy}, we set $\hat{w}(x)=\hat{u}(\frac{x}{|x|^2})-\alpha\log|x|$, then $|\hat{w}(x)|=o(\log\frac{1}{|x|})$ near 0 from Lemma \ref{lem-v-upper} and Lemma \ref{lem-v-lower}, and $\hat{w}$ satisfies
	  \begin{equation}
	  \Delta^2\hat{w}(x)=6|x|^{4\gamma}e^{-4\sum_{j=1}^{4}a_j(\frac{x_j}{|x|^2})^2}e^{4\hat{w}(x)},\quad{\rm in}\ \, \mathbb{R}^4\setminus\{0\}.
	  \end{equation}
	  Note that $a_j\geq 0$, then we can follow the argument in the proof of Lemma \ref{lem-u-asy} to obtain for $|x|$ large and a $\tau\in (0,1)$ such that
	  \begin{equation}
	  \hat{u}(x)=-\alpha\log|x|+c_0+O(|x|^{-\tau}),
	  \end{equation}
	  and
	  \begin{equation}
	  \left\{\begin{array}{ll}
	  -\Delta \hat{u}(x)=\frac{2\alpha}{|x|^2}+O(|x|^{-2-\tau}),	\\
	  -\frac{\partial}{\partial x_i}\Delta \hat{u}(x)=-4\alpha\frac{x_i}{|x|^4}+O(|x|^{-3-\tau}).
	  \end{array}
	  \right.
	  \end{equation}
	  At this point, we establish (\ref{rep-u-2}). Note that for $\hat{w}_{\lambda}=\hat{u}-\hat{u}_{\lambda}$, we have the expression as in \eqref{w-more}
	  \begin{align*}
	      \hat{w}_{\lambda}(x)=&\frac 3{4\pi^2}\int_{\Sigma_{\lambda}}\log \frac{|x^{\lambda}-y|}{|x-y|}
          \bigg (|y|^{4\gamma}e^{-4\sum_{j=1}^{4}a_jy_j^2}(e^{4u}-e^{4u_{\lambda}}) \\
          &+(|y|^{4\gamma}e^{-4\sum_{j=1}^{4}a_jx_j^2}-|y^{\lambda}|^{4\gamma}e^{-4\sum_{j=1}^{4}a_j(y^{\lambda})_j^2})e^{4u}\bigg )dy.
	  \end{align*}
	  Moreover, for $-\Delta \hat{u}+\Delta \hat{u}_{\lambda}$, we can get similar expression as in \eqref{v-more}. 
	  Since $a_j\leq 0$ and $\gamma<0$, we have $|y|^{4\gamma}e^{-4\sum_{j=1}^{4}a_jy_j^2}\geq |y^{\lambda}|^{4\gamma}e^{-4\sum_{j=1}^{4}a_j(y^{\lambda})_j^2}$ in $\Sigma_{\lambda}$, where we use the fact $\lambda\leq0$. Consequently, we can apply the method of moving planes as in the proof of Theorem \ref{thm-classification} to show that $\hat{u}(x)$ is symmetric with respect to the origin.

\end{proof}

\section[preliminaries]{Preliminaries for blowup analysis}\label{preliminaries}

Let $G:M\times M\setminus {\rm diag}$ denote the Green's function for the Paneitz operator
\begin{equation}\label{Green-function}
f(x)-\bar{f}=\int_M G(x,y)P_g f(y){\rm d}V_g(y),\quad \int_M G(x,y){\rm d}V_g(y)=0,
\end{equation}
where $\bar{f}=\frac{1}{{\rm vol}_g(M)}\int_M f{\rm d }V_g$ is the average of $f$ over $M$. Then the weak form of (\ref{Green-function}) is
\begin{equation}\label{Green-weakequ}
P_{g,y}G(x,y)=\delta_x-\frac{1}{{\rm vol}_g(M)}.
\end{equation}

Set $R$ be the regular part of the Green function. Then by the Appendix A in \cite{zhang-weinstein}, for $y$ in a neighborhood of $x$,
\begin{equation}\label{Green-func-expression}
G(x,y)=-\frac{1}{8\pi^2}\log d_g(x,y) \chi+R(x,y).
\end{equation}
where $\chi$ is a cut-off function to avoid cut locus.
Using $G$ we can decompose $u_k$ as the sum of its regular part and singular part
\begin{equation}\label{decompose-uk}
u_k(x)=\tilde{u}_k(x)-8\pi^2\sum_{j=1}^{N}\gamma_j G(x,q_j).
\end{equation}
Then $\tilde{u}_k$ satisfies
\begin{equation}\label{equ-tilde-uk}
P_g\tilde{u}_k+2b_k=2H_ke^{4\tilde{u}_k}\quad {\rm in} \ \, M,
\end{equation}
where
\begin{equation}\label{Hk(x)}
H_k(x)=h_k(x)\prod_{j=1}^N e^{-32\pi^2\gamma_j G(x,q_j)}.
\end{equation}
Clearly, (\ref{Q-equation-blowup}) and (\ref{assumption-coe}) imply that
\begin{equation}\label{finite-integral-tildeuk}
\int_M H_ke^{4\tilde{u}_k}{\rm d}V_g\leq C
\end{equation}

We will work with $\tilde{u}_k$ in the later blow-up analysis.

Similar with \cite{zhang-weinstein}, since the metric $g$ may not be locally conformally flat, we will apply the {\itshape conformal normal coordinates}, whose existence has been proved in \cite{Lee-Parker}. More specially, for $q\in M$, there exists a normal coordinate around $q$ such that $g$ can be deformed to $\mathfrak{g}$ which satisfies $\det\,(\mathfrak{g})=1$. We use $R_{ijkl}$ to denote the curvature tensor under $\mathfrak{g}$.

We will apply the expansions of the metric $\mathfrak{g}$ and its derivatives in the conformal normal coordinates (seen in the Appendix B of \cite{zhang-weinstein}), which are
\begin{equation}\label{expansion-g}
	\begin{split}
		&\mathfrak{g}_{ab}(x)=\delta_{ab}+\frac{1}{3}R_{aijb}x^ix^j+O(r^3),\\
		&\mathfrak{g}^{ab}(x)=\delta_{ab}-\frac{1}{3}R_{aijb}x^ix^j+O(r^3),\\
		&\partial_c\mathfrak{g}^{ab}(x)=-\frac{1}{3}(R_{acib}+R_{aicb})x^i+O(r^2),  \\
		&\partial_{ab}\mathfrak{g}^{ab}(x)=\frac 13 R_{ia,a}x^i+O(r^2),
	\end{split}
\end{equation}

In addition, the following Pohozaev identity from the Appendix D in \cite{zhang-weinstein} will play an important role when the blow-up analysis is carried out in the conformal normal coordinates.
\begin{lemA}\cite{zhang-weinstein}
	
	For equation $P_gu+2b=2he^{4u}$ in $M$ and $\Omega=B(0,r)$, there holds
	\begin{equation}\label{PI-mfd}
		\begin{split}
			&\int_{\Omega}\Big(2he^{4u}+\frac{1}{2}x^i\partial_i he^{4u}\Big)  \\
			=&\int_{\partial\Omega}\Big(\frac{1}{2}x^i\nu_ihe^{4u}-x^k\nu_jg^{ij}\partial_i(\Delta_g u)\partial_k u +\nu_jg^{ij}\Delta_gu\partial_iu +x^k\nu_jg^{ij}\Delta_gu \partial_{ik} u-\frac{1}{2}x^i\nu_i(\Delta_gu)^2 \Big)  \\
			&+\int_{\Omega}\Big(\Delta_gu\partial_i g^{ij}\partial_ju+x^k\Delta_gu\partial_{ik} g^{ij}\partial_ju+x^k\Delta_gu\partial_kg^{ij}\partial_{ij}u-2bx^i\partial_iu\Big)  \\
			&+2\int_{\partial\Omega}\Big(R_{ij,l}(q)x^lx^k\nu_i\partial_ju\partial_ku +O(r^3)|\nabla u|^2\Big)  \\
			&-\int_{\Omega}\Big(2R_{ij,l}(q)\big(x^l\partial_j u\partial_i u+x^kx^l\partial_j u\partial_{ik}u\big)+O(r^2)|\nabla u|^2+O(r^4)|\nabla^2 u|\Big)
		\end{split}
	\end{equation}
	
\end{lemA}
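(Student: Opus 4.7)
\textbf{Proof proposal for the Pohozaev identity (\ref{PI-mfd}).}

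The plan is to mimic the flat Pohozaev identity (Lemma \ref{lem-PI}) but now in the metric $\mathfrak{g}$ given in conformal normal coordinates, and then expand everything using (\ref{expansion-g}). The starting point is to multiply the equation $P_\mathfrak{g} u + 2b = 2he^{4u}$ by the radial derivative $x^k \partial_k u$ and integrate over $\Omega = B(0,r)$. Because $\det \mathfrak{g} = 1$ in these coordinates, the volume form is $dx$, and $\Delta_\mathfrak{g} u = \partial_i(\mathfrak{g}^{ij}\partial_j u)$ with no Jacobian correction; this is what makes the calculation feasible.

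First I would handle the right-hand side. Writing $4he^{4u} \, x^k\partial_k u = x^k\partial_k(he^{4u}) - x^k(\partial_k h)e^{4u}$ and integrating by parts over $\Omega$ produces the volume piece $-2\int_\Omega 2he^{4u}\,dx - \tfrac{1}{2}\int_\Omega x^i\partial_i h\,e^{4u}dx$ together with the boundary contribution $\tfrac12\int_{\partial\Omega}x^i\nu_i he^{4u}d\sigma$; these match the terms on the first line of (\ref{PI-mfd}) once moved to the appropriate side. The term $-2\int_\Omega b\,x^i\partial_i u$ from the prescribed curvature of $g$ is kept unmodified and appears directly on the right of (\ref{PI-mfd}).

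Second, and this is the core, I would work on the biharmonic piece $\int_\Omega \Delta_\mathfrak{g}^2 u\cdot x^k\partial_ku\,dx$. Integrating by parts once gives $-\int_\Omega \partial_j(\mathfrak{g}^{ij}\partial_i \Delta_\mathfrak{g} u)x^k\partial_k u\,dx + \text{bdry}$; a second integration by parts — first moving the derivative off $\Delta_\mathfrak{g} u$, then using the identity $\partial_k u \cdot \partial_{ij} u = \tfrac12\partial_k(\partial_i u\partial_j u) - \partial_k\partial_i u\cdot\partial_j u + \ldots$ — produces boundary contributions
$-x^k\nu_j\mathfrak{g}^{ij}\partial_i(\Delta_\mathfrak{g} u)\partial_k u + \nu_j\mathfrak{g}^{ij}\Delta_\mathfrak{g} u\,\partial_i u + x^k\nu_j\mathfrak{g}^{ij}\Delta_\mathfrak{g} u\,\partial_{ik}u - \tfrac12 x^i\nu_i(\Delta_\mathfrak{g} u)^2$
together with interior remainders in which every $\mathfrak{g}^{ij}$ has been differentiated at least once: terms of the form $\Delta_\mathfrak{g} u\,\partial_i\mathfrak{g}^{ij}\partial_j u$, $x^k\Delta_\mathfrak{g} u\,\partial_{ik}\mathfrak{g}^{ij}\partial_j u$, and $x^k\Delta_\mathfrak{g} u\,\partial_k\mathfrak{g}^{ij}\partial_{ij}u$. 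These are the interior terms on the second line of (\ref{PI-mfd}). I would separately handle the lower-order piece $\mathrm{div}_\mathfrak{g}((\tfrac23 R_\mathfrak{g}\mathfrak{g} - 2\mathrm{Ric}_\mathfrak{g})\nabla u)$: because conformal normal coordinates force $R_\mathfrak{g}(q) = 0$ and $\mathrm{Ric}_\mathfrak{g}(q)$ has vanishing symmetric part at $q$ to the required order, this divergence contributes only error terms of size $O(r^2)|\nabla u|^2$ after multiplication by $x^k\partial_k u$ and integration.

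Third, I would expand using (\ref{expansion-g}): substitute $\mathfrak{g}^{ab} = \delta_{ab} - \tfrac{1}{3}R_{aijb}x^ix^j + O(r^3)$ and the corresponding derivative expansions into the interior and boundary terms just obtained. The leading Euclidean pieces reproduce the flat Pohozaev identity of Lemma \ref{lem-PI}; the $O(r^2)$ curvature corrections group precisely into the $R_{ij,l}(q)$ terms appearing on the last two lines of (\ref{PI-mfd}), with the higher-order remainders absorbed into $O(r^3)|\nabla u|^2$ and $O(r^4)|\nabla^2 u|$. The main obstacle is bookkeeping: carefully distinguishing which factors of $\mathfrak{g}^{ij}$ are expanded to order $0$, $1$, or $2$, and correctly pairing each Christoffel-type remainder with the appropriate interior or boundary integrand so that the symmetrized curvature components $R_{ij,l}(q)$ emerge in the form displayed in (\ref{PI-mfd}). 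Since the identity is already established in Appendix D of \cite{zhang-weinstein}, I would in practice invoke that reference rather than repeat the computation.
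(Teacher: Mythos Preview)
Your proposal is correct and in fact goes beyond what the paper does: the paper gives no proof of this lemma at all, simply quoting it verbatim from Appendix~D of \cite{zhang-weinstein}. Your sketch---multiply by $x^k\partial_k u$, integrate by parts using $\det\mathfrak{g}=1$ so that $\Delta_\mathfrak{g} u=\partial_i(\mathfrak{g}^{ij}\partial_j u)$, then expand via (\ref{expansion-g})---is exactly the derivation carried out in that reference, and your closing remark that one should simply invoke \cite{zhang-weinstein} is precisely the paper's approach.
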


Note that we use $B(p,r)$ to denote a ball centered at $p$ with radius $r$. Sometimes if the center is the origin, we use $B_r$ instead of $B(0,r)$.

\section[Blow-up analysis]{Blow-up analysis near the singularity}\label{blowup-local}

In this section, we focus on the blow-up analysis near $q_j$, and to simplify the notation, we will omit the subscript $j$. Similar to the argument in \cite{zhang-weinstein}, we will work in the conformal normal coordinates near $q$ from \cite{Lee-Parker}. To be specific, we can find some function $w$ defined on $M$, such that in a small neighborhood $B(q,\delta)$ of $q$, $\delta>0$, we have
\begin{equation}
	\det\,(\hat{g})=1
\end{equation}
in the normal coordinates of the conformal metric $\hat{g}=e^{2w}g$. For convenience we just use $g$ instead of $\hat g$. Note that in a neighborhood of $q$,
\begin{equation}\label{hat-w}
w(x)=O(d_g(x,q)^2).
\end{equation}
where $d_g(x,q)$ stands for the distance between $x$ and $q$ under metric $g$.

Using the conformal covariance property of $P_g$
 the function $\mathfrak{u}_k=\tilde{u}_k-w$ satisfies
\begin{equation}\label{equ-hat-uk}
	P_{g}\mathfrak{u}_k+2\mathfrak{b}_k=2H_ke^{4\mathfrak{u}_k},
\end{equation}
and
\begin{equation}\label{finite-int-hatuk}
	\int_MH_ke^{4\mathfrak{u}_k}{\rm d}V_{g}\leq C,
\end{equation}
where $2\mathfrak{b}_k=P_{g}w+2b_k$ and $\tilde{H}_k=H_ke^{4w}$. For simplity, we still denote $\tilde{H}_k$ by $H_k$.

We still use $G$ to denote the Green's function for $P_{g}$. Then we have the following Green's representation formula
\begin{equation}\label{Green-formula}
	\mathfrak{u}_k(x)=\bar{\mathfrak{u}}_k+2\int_MG(x,y)H_k(y)e^{4\mathfrak{u}_k(y)}{\rm d}V_{g}(y)-2\int_MG(x,y)\mathfrak{b}_k(y){\rm d}V_{g}(y),
\end{equation}
where $\bar{\mathfrak{u}}_k$ is the average of $\mathfrak{u}_k$ over $(M,g)$. Using the expression of $G$ in (\ref{Green-func-expression}), we have
\begin{equation}\label{Green-rep-formula}
	\mathfrak{u}_k(x)=\bar{\mathfrak{u}}_k+2\int_M \Big(-\frac{1}{8\pi^2}\log d_g(x,y)\chi\Big) H_k(y)e^{4\mathfrak{u}_k(y)}{\rm d}V_{g}(y)+\mathfrak{\phi}_k(x),
\end{equation}
where
\begin{equation}\label{hat-phik}
	\mathfrak{\phi}_k(x)=2\int_MR(x,y)H_k(y)e^{4\mathfrak{u}_k(y)}{\rm d}V_{g}(y)-2\int_MG(x,y)\mathfrak{b}_k(y){\rm d}V_{g}(y).
\end{equation}

Note that $\det\,(g)=1$ in $B(q,\delta)$, we have ${\rm d}V_{g}(y)={\rm d}y$ in $B(q,\delta)$. Taking the difference of (\ref{Green-rep-formula}) evaluated at $x$ and $q$, we get
\begin{equation}
\begin{split}
	&\mathfrak{u}_k(x)-\mathfrak{u}_k(q)\\
	=&\frac{1}{4\pi^2}\int_M\big(\chi(r_q)\log|y-q|-\chi(r_x)\log d_g(x,y)\big)H_k(y)e^{4\mathfrak{u}_k(y)}{\rm d}V_{g}(y)+\mathfrak{\phi}_k(x)-\mathfrak{\phi}_k(q),
\end{split}
\end{equation}
where $r_q=|y-q|$ and $r_x=d_g(x,y)$. Here since the coordinates are normal, we have $d_g(y,q)=|y-q|$.

Thanks to the cut-off function $\chi$, we can replace the integral over $M$ by an integral over $B(q,2\delta)$:
\begin{equation}\label{difference-Green-for}
\begin{split}
&\mathfrak{u}_k(x)-\mathfrak{u}_k(q)\\
=&\frac{1}{4\pi^2}\int_{B(q,4\delta)}\big(\chi(r_q)\log|y-q|-\chi(r_x)\log d_g(x,y)\big)H_k(y)e^{4\mathfrak{u}_k(y)}{\rm d}V_{g}(y)\\
&+\mathfrak{\phi}_k(x)-\mathfrak{\phi}_k(q),\quad x\in B(q,2\delta).
\end{split}
\end{equation}

\medskip

We next  give an upper bound of the mass near $q$ when $\mathfrak{u}_k$ cannot blow up at $q$. Before stating such a small energy lemma we point out that the function $H_k$ can be written in the  neighborhood of a singular source $q$ as
\begin{equation}\label{eq:mathcalh}
  H_k(x) = \mathfrak{h}_k(x)d_g(x,q)^{4\gamma},
\end{equation}
with $ \mathfrak{h}_k(q) \neq 0$. Using this notation, our result states as follows:
\begin{lem}\label{lem-small-mass-regular}
Let $q$ be a singular source with index $\gamma$.	If
		\begin{equation}\label{smallness-condition}
			\varlimsup_{k\to +\infty}\int_{B(q,2\delta)}2\mathfrak{h}_kd_g(x,q)^{4\gamma}e^{4\mathfrak{u}_k(x)}{\rm d}V_{g}<\min\{8\pi^2,8\pi^2(1+\gamma)\},
		\end{equation}
 $\mathfrak{h}_k$ is defined in \eqref{eq:mathcalh}.
		Then $\mathfrak{u}_k\leq C$ in $B(q,\delta)$.
\end{lem}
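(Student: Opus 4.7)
The strategy is a Brezis--Merle style argument in the fourth order setting, using the Green's representation (\ref{Green-rep-formula}), Jensen's inequality, and a final elliptic bootstrap. For $x\in B(q,\delta)$, (\ref{Green-rep-formula}) gives
\begin{equation*}
\mathfrak{u}_k(x)=\bar{\mathfrak{u}}_k-\frac{1}{4\pi^2}\int_{B(q,2\delta)}\chi(d_g(x,y))\log d_g(x,y)\,H_k(y)e^{4\mathfrak{u}_k(y)}\,dV_g(y)+\mathfrak{\phi}_k(x),
\end{equation*}
where $\mathfrak{\phi}_k$ is uniformly bounded (by the smoothness of $R$ in (\ref{Green-func-expression}) and the $C^1$ bounds in (\ref{assumption-coe})), and the normalization (\ref{volume-normal}) together with $\int_M G(\cdot,q_j)\,dV_g=0$ forces $\bar{\mathfrak{u}}_k\leq C$ by Jensen. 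The contribution from $y$ outside $B(q,2\delta)$ is swept into $\mathfrak{\phi}_k$ via the cut-off $\chi$.

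Set $m_k:=\int_{B(q,2\delta)}H_ke^{4\mathfrak{u}_k}\,dV_g$ and $d\nu_k:=m_k^{-1}H_ke^{4\mathfrak{u}_k}\,dV_g$; by hypothesis $2m_k<\min\{8\pi^2,8\pi^2(1+\gamma)\}$. Jensen's inequality applied to $z\mapsto e^z$ with respect to $d\nu_k$ yields
\begin{equation*}
e^{4\mathfrak{u}_k(x)}\leq C\int_{B(q,2\delta)}d_g(x,y)^{-m_k/\pi^2}\,d\nu_k(y).
\end{equation*}
Raising to a power $p>1$, applying Hölder in $y$ in the form $(\int K\rho\,dy)^p\leq(\int K^p\rho\,dy)(\int\rho\,dy)^{p-1}$, and invoking Fubini yields
\begin{equation*}
\int_{B(q,\delta)}e^{4p\mathfrak{u}_k(x)}\,dx\leq\frac{C}{m_k}\int_{B(q,2\delta)}H_k(y)e^{4\mathfrak{u}_k(y)}\left(\int_{B(q,\delta)}d_g(x,y)^{-pm_k/\pi^2}\,dx\right)dy,
\end{equation*}
whose inner integral is uniformly bounded in $y$ provided $pm_k/\pi^2<4$. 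The hypothesis then lets me pick $p>1$ in the regular case $\gamma\geq 0$, and $p>1/(1+\gamma)$ in the singular case $-1<\gamma<0$, producing a uniform bound $e^{4\mathfrak{u}_k}\in L^p(B(q,\delta))$.

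The last step is to promote this $L^p$ integrability to $L^\infty$ via elliptic regularity for (\ref{equ-hat-uk}). For $\gamma\geq 0$, $H_k$ is bounded near $q$, so $H_ke^{4\mathfrak{u}_k}\in L^p$ and $W^{4,p}$ theory gives $\mathfrak{u}_k\in L^\infty(B(q,\delta))$. For $-1<\gamma<0$, write $H_k=\mathfrak{h}_k d_g(\cdot,q)^{4\gamma}$ and apply Hölder with conjugate exponents $(s,s')$ to $(H_ke^{4\mathfrak{u}_k})^t$: one needs $ts<1/|\gamma|$ (to integrate the singular weight) and $ts'\leq p$. A short calculation with $1/s+1/s'=1$ shows that $t>1$ is achievable precisely when $p>1/(1+\gamma)$, which is exactly the bound produced in the previous paragraph. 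Hence $H_ke^{4\mathfrak{u}_k}\in L^t$ for some $t>1$, and standard elliptic regularity closes the argument.

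The main obstacle is the delicate calibration in the singular regime $-1<\gamma<0$: the Brezis--Merle integrability of $e^{4\mathfrak{u}_k}$ has to be strong enough to absorb the factor $d_g^{4\gamma}$ in the elliptic bootstrap, and the two competing inequalities $p>1/(1+\gamma)$ and $p<4\pi^2/m_k$ are simultaneously solvable with $p>1$ exactly when $m_k<4\pi^2(1+\gamma)$. This is why the threshold $8\pi^2(1+\gamma)$ appears in the hypothesis.
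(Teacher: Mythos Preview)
Your argument is correct and follows the same Brezis--Merle philosophy as the paper, but the implementation differs in a way worth noting. The paper decomposes $\mathfrak{u}_k=u_{1k}+u_{2k}$ where $u_{1k}$ solves a Navier boundary value problem for $\Delta^2$ on $B(q,2\delta)$, and then invokes Lin's fourth-order Brezis--Merle lemma (Lemma~\ref{lem-BM}) to obtain exponential integrability of $u_{1k}$; the remainder $u_{2k}$ is controlled separately via the global Green's function. You bypass this decomposition entirely, working directly with the global Green's representation and applying Jensen's inequality pointwise to the probability measure $d\nu_k$, which gives the same $L^p$ bound on $e^{4\mathfrak{u}_k}$ without the auxiliary boundary value problem or the black-box lemma. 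Your route is more self-contained; the paper's is more modular (the Brezis--Merle step is isolated as a citable lemma). The calibration in the singular regime---choosing $p>1/(1+\gamma)$ so that a further H\"older step absorbs the weight $d_g^{4\gamma}$ and still leaves $H_ke^{4\mathfrak{u}_k}\in L^t$ with $t>1$---is carried out identically in both arguments, and both close with the same elliptic bootstrap. One minor point: your phrase ``swept into $\mathfrak{\phi}_k$ via the cut-off $\chi$'' slightly misidentifies the mechanism---what actually controls the contribution of $y\notin B(q,2\delta)$ is the lower bound $d_g(x,y)\ge\delta$ together with the finite total mass, not the cut-off---but the conclusion is unaffected.
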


\begin{proof}
	
	Note that
	\begin{equation*}
		P_{g}\mathfrak{u}_k=2\mathfrak{h}_kd_g(x,q)^{4\gamma}e^{4\mathfrak{u}_k(x)}-2\mathfrak{b}_k.
	\end{equation*}
	We write $\mathfrak{u}_k=u_{1k}+u_{2k}$ where $u_{1k}$ is the solution of
	\begin{equation}\label{smallness-equ}
		\left\{\begin{array}{lcl}
			\Delta^2 u_{1k}=2\mathfrak{h}_kd_g(x,q)^{4\gamma}e^{4\mathfrak{u}_k(x)}&& {\rm in} \ \, B(q,2\delta)\\
			u_{1k}(x)=\Delta u_{1k}(x)=0 && {\rm on} \ \, \partial B(q,2\delta).
			\end{array}
		\right.
	\end{equation}
	By Lemma 2.3 of \cite{lin-classification} we have
	\begin{equation}\label{BM-thm-1}
	\int_{B(q,2\delta)}\exp\Big\{\frac{\tilde{\delta}|u_{1k}|}{\parallel 2H_ke^{4\mathfrak{u}_k} \parallel_{L^1(B(q,2\delta),g)}}\Big\}{\rm d}V_{g}\leq C,
	\end{equation}
	with any $\tilde{\delta}\in(0,32\pi^2)$ and  some constant $C=C(\tilde{\delta},\delta)$. On one hand in $B(q,2\delta)$,
	\begin{equation}\label{u1k}
		u_{1k}(x)=\int_{B(q,\delta)}G_{\delta}(x,y)2\mathfrak{h}_kd_g(\eta, q)^{4\gamma}e^{4\mathfrak{u}_k}{\rm d}\eta, \quad x\in B(q,2\delta)
	\end{equation}
	where $G_{\delta}(x,y)$ is the Green's function of $\Delta^2$ on $B(q,2\delta)$:
$$G_{\delta}(x,y)=-\frac{1}{8\pi^2}\log |x-y|+R_{\delta}(x,y), $$
with
$$ G_{\delta}(x,y)=\Delta_yG_{\delta}(x,y)=0,\quad \mbox{for } x\in B(q,2\delta), y\in \partial B(q,2\delta). $$
In particular for $x\in B(q, \frac 32\delta)$,
$$u_{1k}(x)=-\frac 1{8\pi^2}\int_{B(q,2\delta)}\log |x-q|2\mathfrak{h}_k|\eta -q|^{4\gamma}e^{4\mathfrak{u}_k}{\rm d}\eta+O(1),\quad x\in B(q,\frac 32\delta). $$
	On the other hand the Green's representation formula of $\mathfrak{u}_k$ gives
	$$\mathfrak{u}_k(x)=u_{1k}(x)+u_{2k}(x)=\overline{ \mathfrak{u}_k}+\int_MG(x,\eta)2\mathfrak{h}_kd_g(x,q)^{4\gamma}e^{4\mathfrak{u}_k}{\rm d}\eta. $$
	Since the leading term of $G$ and $G_{\delta}$ are both $-\frac 1{8\pi^2}\log d_g(x,q)$, we have
	$$u_{2k}(x)=\overline{\mathfrak{u}_k}+O(1). $$
	From $\int_Me^{4\mathfrak{u}_k}{\rm d}V_g\le C$ and Jensen's inequality
	$$e^{4\bar{\mathfrak{u}}_k}\le \int_M e^{4\mathfrak{u}_k}{\rm d}V_g\le C.$$
	Therefore, $u_{2k}\leq C$ in $B(q,\frac{3}{2}\delta)$. Now we focus on $u_{1k}$.
	
	If $\gamma\ge 0$, (\ref{smallness-condition}) is
	\begin{equation*}
		\int_{B(q,2\delta)}2\mathfrak{h}_kd_g(x,q)^{4\gamma}e^{4\mathfrak{u}_k(x)}{\rm d}V_{g}<8\pi^2.
	\end{equation*}
Since $u_{2k}$ is bounded from above in $B(q,\frac 32\delta)$, we see from
 (\ref{BM-thm-1}) that there exists some $p>1$ to make $e^{4u_{1k}}\in L^{p}(B(q,2\delta))$:
	\begin{equation}\label{Lp'-estimate}
		\parallel 2\mathfrak{h}_kd_g(x,q)^{4\gamma}e^{4\mathfrak{u}_k(x)}\parallel_{ L^{p}(B(q,\frac{3}{2}\delta))}\leq C.
	\end{equation}
The estimate (\ref{Lp'-estimate}) leads to a $L^{\infty}$ bound of $u_{1k}$ in $B(q,\delta)$ based on two reasons. First
	the integration of (\ref{u1k}) gives a $L^1$ bound of $u_{1k}$ in $B(q,\frac 32\delta)$:
	\begin{equation}
		\parallel u_{1k} \parallel_{L^1(B(q,\frac{3}{2}\delta))}\leq C.
	\end{equation}
	Second, the standard interior regularity results in \cite{Browder-regularity} (Theorem 1 in Section 3 of \cite{Browder-regularity}) gives
	\begin{equation*}
		\parallel u_{1k}\parallel_{W^{4,p}B(q,\delta)}\leq \parallel 2\mathfrak{h}_kd_g(x,q)^{4\gamma}e^{4\mathfrak{u}_k(x)}\parallel_{ L^{p}(B(q,\frac{3}{2}\delta))}+\parallel u_{1k} \parallel_{L^1(B(q,\frac{3}{2}\delta))}\leq C.
	\end{equation*}
	Thus we have obtained the $L^{\infty}$ bound of $u_{1k}$ in $B(q,\delta)$ by standard Sobolev embedding theorem.
	
	If $-1<\gamma<0$, (\ref{smallness-condition}) is
	\begin{equation*}
		\int_{B(q,2\delta)}2\mathfrak{h}_kd_g(x,q)^{-4|\gamma |}e^{4\mathfrak{u}_k(x)}{\rm d}V_{g}<8\pi^2(1-|\gamma |).
	\end{equation*}
Since $\frac{\tilde \delta}{8\pi^2(1-|\gamma |)}<\frac{4}{1-|\gamma|}$, this strict inequality makes it possible to choose  $\tilde{\delta}$ in (\ref{BM-thm-1}) close to $32\pi^2$ such that there is a $p>1$, 
$p<\frac{1}{|\gamma |}$ and $p<\frac{\tilde{\delta}}{32\pi^2(1-|\gamma |)}$.
	Then  H${\rm \ddot{o}}$lder inequality tells us that (\ref{Lp'-estimate}) is also true in this case and the $L^{\infty}$ bound of $u_{1k}$ over $B(q,\delta)$ follows immediately. The combination of the $L^{\infty}$ bound of $u_{1k}$ and the upper bound of $u_{2k}$ implies that $\mathfrak{u}_k\le C$ in $B(q, \delta)$.

\end{proof}

An immediate consequence of Lemma \ref{lem-small-mass-regular} is that blowup sequence only converges to point measures. The following theorem takes one step further to assert that the point measure is quantized and the bubbling solutions tend to $-\infty$ away from blowup points.

\begin{thm}[Concentration and Quantization]\label{thm-con-quan}
	
	Let $\{\mathfrak{u}_k\}$ be a sequence of solution to (\ref{equ-hat-uk}) with (\ref{finite-int-hatuk}). Assume that $q$ is the only blow-up point of $\mathfrak{u}_k$ in $B(q,2\delta)$, then as $k\to+\infty$, along a subsequence, there hold
	\begin{equation}\label{con-neg-infty}
		\mathfrak{u}_k\to-\infty,\quad uniformly\ \, on \ \, any\ \, compact \ \, set\ \, of\ \, B(q,\delta)\setminus\{q\},
	\end{equation}
	\begin{equation}\label{con-measure}
	2 \mathfrak{h}_kd_g(x,q)^{4\gamma}e^{4\mathfrak{u}_k}\to\beta\delta_q,\quad in \ \, the\ \, measure\ \, on\ \,B(q,\delta),
	\end{equation}
	with $\beta=16\pi^2(1+\gamma)$ and $\mathfrak{h}_k$ is defined in \eqref{eq:mathcalh}.
	
In particular, along a subsequence,
	\begin{equation}\label{con-mass}
		2 \int_{B(q,\delta)}H_k(x)e^{4\mathfrak{u}_k(x)}{\rm d}V_{g}\to16\pi^2(1+\gamma),\quad as\ \, k\to+\infty.
	\end{equation}
	
\end{thm}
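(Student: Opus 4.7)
The strategy combines a concentration-compactness step based on Lemma~\ref{lem-small-mass-regular}, a rescaling at $q$ matched to the singular weight $d_g(\cdot,q)^{4\gamma}$, and an application of the quantization in Theorem~\ref{thm-classification}; the Green's representation \eqref{Green-rep-formula} is then used to pin down the $-\infty$ behaviour away from $q$.

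Since $q$ is the only blow-up point in $B(q,\delta)$, Lemma~\ref{lem-small-mass-regular} (and Remark~\ref{rem-small} at interior regular points) yields a locally uniform upper bound for $\mathfrak{u}_k$ on $B(q,\delta)\setminus\{q\}$. Combined with the mass bound \eqref{finite-int-hatuk}, we extract a subsequence along which $\mu_k:=2H_k e^{4\mathfrak{u}_k}\,dV_g\rightharpoonup \mu = \beta\,\delta_q+\rho\,dV_g$ on $B(q,\delta)$, with $\rho\in L^1_{\mathrm{loc}}(B(q,\delta)\setminus\{q\})$ and some $\beta\ge 0$. Working in the conformal normal coordinates of Section~\ref{preliminaries}, I rescale at $q$ by choosing $\lambda_k\to 0^+$ so that $v_k(y):=\mathfrak{u}_k(\lambda_k y)+(1+\gamma)\log\lambda_k$ satisfies $\sup v_k=0$ attained at a bounded point. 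Using the metric expansion \eqref{expansion-g} and the factorization $H_k(x)=\mathfrak{h}_k(x)\,d_g(x,q)^{4\gamma}$, the rescaled equation reduces, on bounded sets of $\mathbb{R}^4$, to
\[
\Delta^2 v_k \;=\; 2\mathfrak{h}_k(0)|y|^{4\gamma}e^{4v_k}+o(1),\qquad \int_{\mathbb{R}^4}|y|^{4\gamma}e^{4v_k}\,dy\le C.
\]
Elliptic regularity (first for $\Delta v_k$, along the lines of Lemma~\ref{lem-lap-u}, and then bootstrapped) gives $v_k\to U$ in $C^4_{\mathrm{loc}}$, with $U$ solving \eqref{equ-liou-2} after absorbing $\mathfrak{h}_k(0)$ into an additive constant. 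The upper bound for $\mathfrak{u}_k$ away from $q$ translates into $U(y)=o(|y|^2)$ at infinity, so Theorem~\ref{thm-classification}(i) applies and yields
\[
\lim_{R\to\infty}\lim_{k\to\infty}\int_{B(q,R\lambda_k)}2H_k e^{4\mathfrak{u}_k}\,dV_g \;=\; 16\pi^2(1+\gamma),
\]
in particular $\beta\geq 16\pi^2(1+\gamma)$.

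For the $-\infty$ assertion \eqref{con-neg-infty}, inserting the weak convergence of $\mu_k$ and the local upper bound into \eqref{Green-rep-formula} yields, for $x\in B(q,\delta)\setminus\{q\}$, $\mathfrak{u}_k(x)-\bar{\mathfrak{u}}_k\to -\tfrac{\beta}{4\pi^2}\log d_g(x,q)\,\chi+\Phi(x)$ with $\Phi$ continuous. If $\bar{\mathfrak{u}}_k$ stayed bounded along a subsequence, the resulting finite limit $\mathfrak{u}_\infty$ would satisfy $\mathfrak{u}_\infty(x)\sim -\tfrac{\beta}{8\pi^2}\log d_g(x,q)$ near $q$, and the density $H_\infty e^{4\mathfrak{u}_\infty}\sim d_g(x,q)^{4\gamma-\beta/(2\pi^2)}$ would fail to be locally integrable at $q$ as soon as $\beta\geq 8\pi^2(1+\gamma)$---which holds thanks to the bubble mass bound of the previous step since $\gamma>-1$. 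This contradicts \eqref{finite-int-hatuk}; hence $\bar{\mathfrak{u}}_k\to -\infty$, and together with the local upper bound this gives \eqref{con-neg-infty}. Consequently $\int_K H_k e^{4\mathfrak{u}_k}\to 0$ on every compact $K\subset B(q,\delta)\setminus\{q\}$, so $\rho\equiv 0$ and $\mu\llcorner B(q,\delta)=\beta\,\delta_q$.

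Finally, to upgrade to the equality $\beta=16\pi^2(1+\gamma)$ (ruling out a higher multiple coming from a bubble tree at $q$), I compare the Pohozaev identity \eqref{PI-mfd} written on $B(q,R\lambda_k)$---which after rescaling reproduces the Pohozaev identity for $U$ and contributes exactly $16\pi^2(1+\gamma)$---with the same identity written on a fixed small ball $B(q,r)$, whose boundary terms are controlled by $\mathfrak{u}_k\to -\infty$ on compacts of $B(q,r)\setminus\{q\}$ and hence vanish in the limit. Subtracting the two identities and letting first $R\to\infty$ and then $r\to 0^+$ forces the intermediate mass in the neck $B(q,r)\setminus B(q,R\lambda_k)$ to vanish and yields $\beta=16\pi^2(1+\gamma)$, establishing \eqref{con-measure} and \eqref{con-mass}. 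The main technical obstacle is precisely this neck step: ruling out a secondary bubble at an intermediate scale requires a careful use of \eqref{PI-mfd} in the perturbed metric $\mathfrak{g}$ together with the sharp asymptotic expansions from Theorem~\ref{thm-classification}(ii).
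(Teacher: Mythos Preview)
Your route is genuinely different from the paper's, and the rescaling step hides a real gap.

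The paper never rescales at $q$ and never invokes Theorem~\ref{thm-classification}. Both \eqref{con-neg-infty} and the quantization $\beta=16\pi^2(1+\gamma)$ are obtained by working with a \emph{limit function} on the fixed punctured ball and applying the Pohozaev identity \eqref{PI-mfd} on $B(q,r)$ as $r\to 0$. For \eqref{con-neg-infty}: assuming $\mathfrak{u}_k$ stays bounded below at some point away from $q$, one passes to a limit $\mathfrak{u}_0$ solving the equation with an extra $\beta\delta_q$; Green's representation gives $\mathfrak{u}_0=-\tfrac{\beta}{8\pi^2}\log d_g(\cdot,q)+v+w$ with $v\in L^\infty$, so $V:=\mathfrak{h}_0\,d_g^{4\gamma}e^{4\mathfrak{u}_0}\asymp d_g^{-s}$ with $s=\tfrac{\beta}{2\pi^2}-4\gamma$, and $V\in L^1$ forces $\beta<8\pi^2(1+\gamma)$. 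For $\gamma\le 0$ this already contradicts Lemma~\ref{lem-small-mass-regular}; for $\gamma>0$ one plugs the asymptotics of $\mathfrak{u}_0$ into \eqref{PI-mfd} and obtains, as $r\to 0$, the algebraic relation $(1+\gamma)\beta=\tfrac{\beta^2}{16\pi^2}$, i.e.\ $\beta=16\pi^2(1+\gamma)$, again a contradiction. Once \eqref{con-neg-infty} holds, the same Pohozaev computation applied to the limit of $\mathfrak{u}_k-c_k$ (with $c_k$ a boundary average tending to $-\infty$) yields the quantization directly---no bubble extraction, no classification, no neck.

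Your rescaling $v_k(y)=\mathfrak{u}_k(\lambda_k y)+(1+\gamma)\log\lambda_k$ with ``$\sup v_k=0$ attained at a bounded point'' is not a free normalization: it asserts that the maximum of $\mathfrak{u}_k$ in $B(q,\delta)$ lies at distance $O(\lambda_k)$ from $q$. That is exactly the spherical Harnack inequality of Theorem~\ref{SHT}/Corollary~\ref{lem-simple-blowup}, which in the paper is deduced \emph{from} Theorem~\ref{thm-con-quan} and holds only when $\gamma\notin\mathbb{N}$. When $\gamma\in\mathbb{N}$ the maxima can sit at distance $\gg\lambda_k$ from $q$ (several regular bubbles clustering at $q$), and then $v_k\to -\infty$ on compact sets, no nontrivial $U$ appears, your lower bound $\beta\ge 16\pi^2(1+\gamma)$ fails, and with it both your $-\infty$ argument for $\gamma>0$ (Lemma~\ref{lem-small-mass-regular} only gives $\beta\ge 8\pi^2$ there) and your two-scale Pohozaev comparison. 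To make the bubble-tree route work you would need a full selection process allowing bubbles centred off $q$ and a Pohozaev balance between all of them and the singular weight---substantially more than what you sketch. The paper's single-scale Pohozaev on the limit function sidesteps this entirely.
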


\begin{proof}[\textbf{Proof}]
	
	Suppose $q$ is the only blowup pint in $B(q,2\delta)$, then we observe that for any given $K\subset\subset B(q,\delta)\setminus \{q\}$
\begin{equation}\label{bd-osi}
|\mathfrak{u}_k(x)-\mathfrak{u}_k(y)|\le C(K),\quad x,y\in K,
\end{equation}
because the $|x-q|$ is comparable to $|y-q|$ and the total integration of $\mathfrak{h}_ke^{4\mathfrak{u}_k}$ is bounded. It is easy to obtain (\ref{bd-osi}) from the Green's representation formula. Next we claim that 
\begin{equation}\label{uk-grad}
	|\nabla_{g}^j\mathfrak{u}_k(x)|_{g}\leq C(K),\quad {\rm in}\ \,K,\quad j=1,2,3
\end{equation}
Set $r_0=\frac{1}{4}dist(K,B(q,\delta)\setminus\{q\})$. The Green's representation formula implies
\begin{equation*}
	\nabla_{g}^j\mathfrak{u}_k(x)=-\frac{1}{4\pi^2}\int_{B(q,2\delta)}\nabla_{g,x}^j\log d_g(x,y)H_k(y)e^{4\mathfrak{u}_k(y)}{\rm d}V_{g}(y)+\nabla_{g}^j\phi_k(x).
\end{equation*}
We only need to show that
\begin{equation*}
	\int_{B(q,2\delta)}\big|\nabla_{g,x}^j\log d_{g}(x,y)\big|H_k(y)e^{4\mathfrak{u}_k(y)}{\rm d}V_{g}(y)\leq C(K).
\end{equation*}
By means of \eqref{rough-comparsion-dist} and the boundness of $\|H_ke^{4\mathfrak{u}_k}\|_{L^1(M)}$, we obtain
\begin{equation*}
	\begin{split}
		&\int_{B(q,2\delta)}\big|\nabla_{g,x}^j\log d_{g}(x,y)\big|H_k(y)e^{4\mathfrak{u}_k(y)}{\rm d}V_{g}(y) \leq \int_{B(q,2\delta)}\frac{1}{|x-y|^{j}}H_k(y)e^{4\mathfrak{u}_k(y)}{\rm d}V_g(y)  \\
		\leq&C\int_{B(q,2\delta)\cap B(x,r_0)}\frac{1}{|x-y|^{j}}H_k(y)e^{4\mathfrak{u}_k(y)}{\rm d}V_g+\int_{B(q,2\delta)\setminus B(x,r_0)}\frac{1}{|x-y|^{j}}H_k(y)e^{4\mathfrak{u}_k(y)}{\rm d}V_g\\
		\leq& C(K)r_0^{4-j}+Cr_0^{-j} \\
		\leq& C(K).
	\end{split}
\end{equation*}

\medskip

Then the equation for $\mathfrak{u}_k$ further provides the estimate for the fourth order derivatives of $\mathfrak{u}_k$:
$$|\nabla^{\alpha}\mathfrak{u}_k(x)|\le C(K), \quad x\in  K, \quad |\alpha|=4. $$
	
Now we prove (\ref{con-neg-infty}) by contradiction. Suppose that there exists a point $x_0\in B(q,\delta)\setminus\{q\}$ such that $\{\mathfrak{u}_k(x_0)\}_{k\in\mathbb{N}}$ is bounded from below. By (\ref{bd-osi}) we see that $\mathfrak{u}_k$ is bounded in $L^{\infty}$ norm in any compact subset of $B(q,\delta)\setminus\{q\}$. This fact and the gradients estimates of $\mathfrak{u}_k$ guarantee that along a subsequence
	\begin{equation*}
		\mathfrak{u}_k\to \mathfrak{u}_0\quad {\rm in}\ \,C_{loc}^{3,\sigma}(B(q,2\delta)\setminus\{q\}),
	\end{equation*}
	with some constant $\sigma\in(0,1)$ and the limit function $\mathfrak{u}_0$ solves
	\begin{equation*}
		P_{g}\mathfrak{u}_0(x)+2\mathfrak{b}_0(x)=2\mathfrak{h}_0(x)d_g(x,q)^{4\gamma}e^{4\mathfrak{u}_0(x)}  \quad {\rm in} \ \,B(q,2\delta)\setminus\{q\}.
	\end{equation*}
Around $q$ we use $\beta(r)$ and its limit to describe the concentration of energy:	
	\begin{equation*}
		\begin{split}
			&\beta_k(r)=\int_{B(q,r)}2\mathfrak{h}_k(x)d_g(x,q)^{4\gamma}e^{4\mathfrak{u}_k(x)}{\rm d}V_{g}   \\
			&\beta(r)=\lim_{k\to+\infty}\beta_k(r),\quad \beta=\lim_{r\to 0}\beta(r).
		\end{split}
	\end{equation*}
From Lemma \ref{lem-small-mass-regular} we see that if $-1<\gamma< 0$, $\beta\ge8\pi^2(1+\gamma)$; and if $\gamma\ge0$, $\beta\ge 8\pi^2$.
	Fixing any $r>0$ small, we integrate the equation of $\mathfrak{u}_k$ in $B(q,r)$ to obtain
	\begin{equation}\label{beta-1}
		\int_{B(q,r)}\big(P_{g}\mathfrak{u}_k+2\mathfrak{b}_k\big){\rm d}V_{g}=\int_{B(q,r)}H_ke^{4\mathfrak{u}_k}{\rm d}V_{g}=\beta_k(r).
	\end{equation}
	By means of the properties of the metric in the conformal normal coordinates (see Appendix C in \cite{zhang-weinstein}), we can rewrite the first term in the left hand of \eqref{beta-1} as:
	\begin{align}
		\int_{B(q,r)}P_{g}\mathfrak{u}_k{\rm d}V_{g}=&\int_{B(q,r)}\Big(\Delta_{g}^2\mathfrak{u}_k+{\rm div}_g\Big(\big(\frac{2}{3}R_g g-2{\rm Ric}_g\big)\nabla \mathfrak{u}_k\Big)\Big){\rm d}V_{g}   \nonumber \\
		=&\int_{B(q,r)}\partial_l\Big(g^{il}\partial_i(\Delta_{g}\mathfrak{u}_k)+\big(\frac{2}{3}R g_{ij}-2R_{ij}\big)\partial_m\mathfrak{u}_kg^{li}g^{mj}\Big)   \label{beta-2}\\
		=&\int_{\partial B(q,r)}\Big(g^{il}\partial_i(\Delta_{g}\mathfrak{u}_k)+\big(\frac{2}{3}R g_{ij}-2R_{ij}\big)\partial_m\mathfrak{u}_kg^{li}g^{mj}\Big)\nu_l,\nonumber
	\end{align}
	where $\nu$ denotes the unit outward normal to $\partial B(q,r)$. By letting $k\to +\infty$ and then $r\to 0$, \eqref{beta-1}$\sim$\eqref{beta-2} and the definition of $\beta$ imply
	\begin{equation*}
		\lim_{r\to 0}\int_{B(q,r)}\big(P_{g}\mathfrak{u}_0+2\mathfrak{b}_0\big){\rm d}V_{g}=\beta.
	\end{equation*}
	Therefore, $\mathfrak{u}_0$ satisfies, in the distribution sense,
	\begin{equation*}
		P_{g}\mathfrak{u}_0(x)+2\hat{b}_0(x)=2\mathfrak{h}_0(x)d_g(x,q)^{4\gamma}e^{4\mathfrak{u}_0(x)}+\beta \delta_q  \quad {\rm in} \ \,B(q,2\delta).
	\end{equation*}
	Using the Green's representation formula for $\mathfrak{u}_0$, we have
	\begin{equation}\label{hatu0-decompose}
		\mathfrak{u}_0(x)=-\frac{\beta}{8\pi^2}\log d_g(x,q)+v(x)+w(x)
	\end{equation}
	where the first term comes from the convolution of $-\frac 1{8\pi^2}\log d_g(x,y)\chi$ with $\beta \delta_q$, the second term $v$ comes from
the convolution of $-\frac 1{8\pi^2}\log d_g(x,y)\chi $ with $2\mathfrak{h}_0d_g(x,q)^{4\gamma}e^{4\mathfrak{u}_0}$:
	\begin{equation}\label{hatu0-v}
		v(x)=-\frac{1}{4\pi^2}\int_{B(q,\delta)}\log d_g(x,y)\mathfrak{h}_0(y)d_g(y,q)^{4\gamma}e^{4\mathfrak{u}_0(y)}{\rm d}V_{g}(y),
	\end{equation}
	and $w$ is the collection of insignificant other terms:
	\begin{equation}\label{hatu0-w}
		w\in C^4(B(q,2\delta)).
	\end{equation}
	For $v$ we use (\ref{hatu0-v}) to denote $v(x)$ in $B(q,\delta)$ and we extend it smoothly such that $v\equiv 0$ on $M\setminus B(q,2\delta)$.
Based on the definition of $v$ we now show that $v\in L^{\infty}(B(q,\delta))$ for all $\gamma>-1$. In fact, from (\ref{hatu0-v}) we have this lower bound of $v$ in $B(q,\delta)$:
	\begin{equation}\label{v-lb}
		v(x)\geq \frac{1}{4\pi^2}\log\frac{1}{\delta}\parallel V\parallel_{L^1(B(q,\delta))}\geq C\quad {\rm in} \ \, B(q,\delta)
	\end{equation}
	where
$$V(x)=\mathfrak{h}_0(x)d_g(x,q)^{4\gamma}e^{4\mathfrak{u}_0(x)}. $$
The lower bounds of $\mathfrak{h}_0$ and $v$ lead to a lower bound for $V(x)$:
	\begin{equation*}
		V(x)=\mathfrak{h}_0(x)d_g(x,q)^{4\gamma}e^{4\mathfrak{u}_0(x)}\geq Cd_g(x,q)^{4\gamma-\frac{\beta}{2\pi^2}}e^{4v(x)+4w(x)}\geq\frac{c}{d_g(x,q)^s}
	\end{equation*}
	with $s=\frac{\beta}{2\pi^2}-4\gamma$ and suitable $c>0$. Since $\|V\|_{L^1 (B(q,\delta))}<\infty$ we see immediately that $s<4$, which is
	\begin{equation}\label{beta-con-1}
		\beta<8\pi^2(1+\gamma).
	\end{equation}
	Thus there is no way for $\mathfrak{u}_k$ to be bounded from below away from singular source unless $\gamma>0$. We have proved (\ref{con-neg-infty}) for $\gamma\le 0$.
	For $\gamma>0$ we have an upper bound for $V(x)$:
	\begin{equation}
		V(x)\leq\frac{c}{d_g(x,q)^s}e^{4v(x)}\quad {\rm in} \ \, B(q,\delta), \quad \mbox{if}\quad \gamma>0.
	\end{equation}
	To proceed with the proof of $v\in L^{\infty}$ we observe from (\ref{hatu0-v}) and direct computation that
	\begin{equation*}
		\left\{\begin{array}{lcl}
		\Delta^2 v(x)=V(x)+\eta(x)&& {\rm in} \ \, B(q,2\delta)\\
		v(x)=\Delta v(x)=0 && {\rm on} \ \, \partial B(q,2\delta),
		\end{array}
		\right.
	\end{equation*}
	where $\eta$ is smooth in $B(q,2\delta)$. Note that the boundary condition of $v$ on $\partial B(q,2\delta)$ is based on the smooth extension of $v$ mentioned before. Now we employ a standard argument of Brezis-Merle \cite{BM} to obtain $e^{\kappa |v|}\in L^1(B(q,2\delta),g)$ for any constant $\kappa>0$. Indeed, let $0<\epsilon<1/\kappa$ and $V=V_1+V_2$ with $\parallel V_1\parallel_{L^1(B(q,2\delta))}<\epsilon$ and $V_2\in L^{\infty}(B(q,2\delta))$. Correspondingly we write  $v=v_1+v_2$, where $v_1$ solves
	\begin{equation*}
		\left\{\begin{array}{lcl}
		\Delta^2 v_1(x)=V_1(x)&& {\rm in} \ \, B(q,2\delta)\\
		v_1(x)=\Delta v_1(x)=0 && {\rm on} \ \, \partial B(q,2\delta),
		\end{array}
		\right.
	\end{equation*}
	and $v_2$ solves
	\begin{equation*}
		\left\{\begin{array}{lcl}
		\Delta^2 v_2(x)=V_2(x)+\eta(x)&& {\rm in} \ \, B(q,2\delta)\\
		v_2(x)=\Delta v_2(x)=0 && {\rm on} \ \, \partial B(q,2\delta),
		\end{array}
		\right.
	\end{equation*}

Choosing $\tilde{\delta}=32\pi^2-1$ in (\ref{BM-thm-1}), we find
$$\int_{B(q,2\delta)}e^{\kappa |v_1|}\leq\int_{B(q,2\delta)}e^{\frac{|v_1|}{\parallel V_1\parallel_{L^1(B(q,2\delta))}}}\leq C$$
based on the smallness of $\epsilon$. Then by standard elliptic regularity theory, we have $v_2\in L^{\infty}(B(q,2\delta))$. Consequently, $e^{\kappa |v|}\in L^1(B(q,2\delta),g)$. Since $\kappa$ is large it is possible to use H${\rm \ddot{o}}$lder inequality to
 obtain $V\in L^{p^*}(B(q,2\delta),{g})$ for some $p\in(1,\frac{4}{s})$ if $s\geq 0$ and $p\in(1,+\infty)$ if $s\leq 0$. Here $p^*=\frac{p}{p-1}$ denotes the conjugate of $p$. Thus we have proved $v\leq C$ in $B(q,2\delta)$.
	
From the $L^{\infty}$ bound of $v$ we can use two positive constants $c_1$ and $c_2$ to bound $V$ from above and below
	\begin{equation}\label{range-V}
		\frac{c_1}{d_g(x,q)^s}\leq V(x)=\mathfrak{h}_0(x)d_g(x,q)^{4\gamma}e^{4\mathfrak{u}_0(x)}\leq \frac{c_2}{d_g(x,q)^s},\quad \mbox{if}\quad \gamma>0.
	\end{equation}

	Next, we aim to derive a contradiction by taking advantage of the Pohozaev identity (\ref{PI-mfd}) in Section \ref{preliminaries}.
	Set $h(x)=\mathfrak{h}_0(x)d_g(x,q)^{4\gamma}$ and $\Omega=B(q,r)$ in (\ref{PI-mfd}), then direct computation and (\ref{comparsion-dist}) give rise to
	\begin{equation*}
		\int_{\Omega}x^i\partial_i\big(\mathfrak{h}_0(x)d_g(x,q)^{4\gamma}\big)e^{4\mathfrak{u}_0}=\int_{B(q,r)}\big(x^i\partial_i \mathfrak{h}_0(x)d_g(x,q)^{4\gamma}e^{4\mathfrak{u}_0}+x^i\mathfrak{h}_0\partial_i d_g(x,q)^{4\gamma}e^{4\mathfrak{u}_0}\big)
	\end{equation*}
	and
	\begin{equation*}
		\begin{split}
			\partial_i d_g(x,q)^{4\gamma}=&4\gamma d_g(x,q)^{4\gamma-1}\partial_i d_g(x,q)=4\gamma d_g(x,q)^{4\gamma-1}\big(\partial_i|x-q|+O(r)\big)\\
			=&4\gamma d_g(x,q)^{4\gamma-2}\big((x-q)^i+	O(r^2)\big)
		\end{split}
	\end{equation*}
	Immediately, together with (\ref{comparsion-dist}) we get
	\begin{equation*}
		\begin{split}
			&\int_{\Omega}\frac{1}{2}(x-q)^i\partial_i\big(\mathfrak{h}_0(x)d_g(x,q)^{4\gamma}\big)e^{4\mathfrak{u}_0}\\
			=&\int_{B(q,r)}2\gamma  \mathfrak{h}_0(x)d_g(x,q)^{4\gamma}e^{4\mathfrak{u}_0}+\int_{B(q,r)}\frac{1}{2}\partial_{\nu} \mathfrak{h}_0 d_g(x,q)^{4\gamma+1}e^{4\mathfrak{u}_0}+o_r(1)
		\end{split}
	\end{equation*}
	Therefore, for $r\to 0$
	\begin{equation}\label{LHS-PI}
		{\rm (LHS)\ \,of\ \,}(\ref{PI-mfd})=2(1+\gamma)\int_{B(q,r)}\mathfrak{h}_0d_g(x,q)^{4\gamma}e^{4\mathfrak{u}_0}+O(r)=(1+\gamma)\beta+o_r(1),
	\end{equation}
	where $\lim_{r\to 0}o_r(1)=0$. Denote the four integrals on the right hand side of (\ref{PI-mfd}) by $I_1$, $I_2$, $I_3$ and $I_4$, respectively.
	Thanks to the expansions of $g$, which are in (\ref{expansion-g}), we obtain that
	\begin{equation*}
		|I_2|\leq C\int_{B(q,r)}\Big(r|\nabla^2\mathfrak{u}_0||\nabla\mathfrak{u}_0|+|x-q||\nabla^2\mathfrak{u}_0||\nabla\mathfrak{u}_0|+r|x-q||\nabla^2\mathfrak{u}_0|+|x-q||\nabla \mathfrak{u}_0|\Big),
	\end{equation*}
	\begin{equation*}
		|I_3|\leq C\int_{\partial B(q,r)}\Big(|x-q|^2|\nabla\mathfrak{u}_0|^2+O(r^3)|\nabla\mathfrak{u}_0|^2\Big),
	\end{equation*}
	\begin{equation*}
		|I_4|\leq C\int_{B(q,r)}\Big(|x-q||\nabla\mathfrak{u}_0|^2+|x-q|^2|\nabla^2\mathfrak{u}_0||\nabla\mathfrak{u}_0|+O(r^2)|\nabla\mathfrak{u}_0|^2+O(r^4)|\nabla^2\mathfrak{u}_0|\Big),
	\end{equation*}

	Next, we shall estimate $|\nabla^j\mathfrak{u}_0|$ in $B(q,r)$ for $j=1,2,3$. Recalling (\ref{hatu0-decompose})$\sim$(\ref{hatu0-w}), it is important to consider the $\nabla^j v$ in $B(q,r)$ for $j=1,2,3$.By means of the Green's representation formula, we observe that
	\begin{equation*}
		\begin{split}
			|\nabla^jv(x)|\leq  C\int_{B(q,2\delta)}\frac{1}{d_g(x,y)^j}V(y)+O(1)
		\end{split}
	\end{equation*}
	In order to estimate the integral in the inequality above, we decompose $B(q,2\delta)$ into two parts
	\begin{equation*}
		\Omega_1=B(q,2\delta)\cap\Big\{d_g(x,y)\leq\frac{d_g(x,q)}{2}\Big\},\quad \Omega_2=B(q,2\delta)\setminus \Omega_1.
	\end{equation*}
	In this estimate we use $V(y)=O(1)d_g(y,q)^{-s}$ in (\ref{range-V}). Hence
	\begin{equation}\label{inte-omega1}
		\int_{\Omega_1}\frac{V(y)}{d_g(x,y)^j}{\rm d}V_{g}(y)\leq \frac{C}{d_g(x,q)^s}\int_{B(x,\frac{d_g(x,q)}{2})}\frac{1}{d_g(x,y)^j}{\rm d}V_{g}(y)\leq C d_g(x,q)^{4-s-j}.
	\end{equation}
	Using (\ref{range-V}) again, we obtain that
	\begin{equation*}
		\begin{split}
			\tilde{I}:=\int_{\Omega_2}\frac{1}{d_g(x,y)^j}V(y){\rm d}V_{g}(y)\leq C\int_{\Omega_2}\frac{1}{d_g(x,y)^j}\frac{1}{d_g(y,q)^s}{\rm d}V_{g}(y).
		\end{split}
	\end{equation*}
	
	Fixing some $t$ as follows
	\begin{equation}\label{t-range}
		t\in  \left\{\begin{array}{lcl}
		(0,\frac{4}{s}),&& s>0,\\
		(1,+\infty),&& s\leq 0,
		\end{array}
		\right.
	\end{equation}
	we have $-ts>-4$. It follows from the H$\ddot{\rm o}$lder inequality that
	\begin{equation*}
		\begin{split}
			\tilde{I}\leq  C\Big(\int_{\Omega_2}\frac{1}{d_g(x,y)^{jt^*}}{\rm d}V_{g}(y)\Big)^{\frac{1}{t^*}}\Big(\int_{\Omega_1}\frac{1}{d_g(y,q)^{st}}{\rm d}V_{g}(y)\Big)^{\frac{1}{t}}
			\leq  C\Big(\int^{\tilde{c}}_{\frac{d_g(x,q)}{2}}\frac{1}{\rho^{jt^*-3}}{\rm d}\rho\Big)^{\frac{1}{t^*}}
		\end{split}
	\end{equation*}
	where $t^*=\frac{t}{t-1}$ denotes the conjugate of $t$ and $\tilde{c}$ is some positive constant. Then direct computation and the fact $-ts>-4$ imply that
	\begin{equation}\label{inte-omega2}
		\tilde{I}=\int_{\Omega_2}\frac{1}{d_g(x,y)^j}V(y){\rm d}V_{g}(y)\leq
		\left\{\begin{array}{lcl}
		C|\log d_g(x,q)|^{\frac{1}{t^*}},&& {\rm if} \ \,jt^*=4,\\
		Cd_g(x,q)^{\frac{4}{t^*}-j}+C,&& {\rm if} \ \,jt^*\neq 4.
		\end{array}
		\right.
	\end{equation}
	In view of (\ref{t-range}), we get that
	\begin{equation*}
		t^*\in  \left\{\begin{array}{lcl}
		(\frac{4}{4-s},+\infty),&& s>0,\\
		(1,+\infty),&& s\leq 0.
		\end{array}
		\right.
	\end{equation*}
	Hence, there holds $\frac{4}{t^*}-j<4-s-j$. Consequently, from (\ref{inte-omega1}) and (\ref{inte-omega2}) there exists some $\tau>0$ such that for any $r\in(0,\delta)$
	\begin{equation}\label{est-dv-inball}
		|\nabla^j v(x)|\leq Cd_g(x,q)^{\tau-j}+C,\quad j=1,2,3,\quad x\in B(q,r).
	\end{equation}
	In fact, we may choose $\tau\in(0,1)$ if $jt^*=4$, and otherwise $\tau=\frac{4}{t^*}$. At this point, we obtain that
	\begin{equation}
		|\nabla^j \mathfrak{u}_0(x)|\leq Cd_g(x,q)^{-j},\quad j=1,2,3\quad x\in B(q,r).
	\end{equation}
	Thus by virtue of (\ref{comparsion-dist}) and (\ref{est-dv-inball}), we may adjust $\tau>0$ such that on $\partial B(q,r)$
	\begin{equation*}
		\begin{split}
			&\frac{\partial\mathfrak{u}_0}{\partial r}=-\frac{\beta}{8\pi^2}\frac{1}{|x-q|}+O(r^\tau)\frac{1}{|x-q|}+O(1),  \\
			&\Delta\mathfrak{u}_0=-\frac{\beta}{4\pi^2}\frac{1}{|x-q|^2}+O(r^\tau)\frac{1}{|x-q|^2}+O(1),  \\
			&\frac{\partial}{\partial r}\Big(r\frac{\partial\mathfrak{u}_0}{\partial r}\Big)=O(r^{\tau-1}),  \\
			&\frac{\partial\Delta\mathfrak{u}_0}{\partial r}=\frac{\beta}{2\pi^2}\frac{1}{|x-q|^3}+O(r^\tau)\frac{1}{|x-q|^3}+O(1).
		\end{split}
	\end{equation*}

	\medskip
	
	Therefore, the estimates of $I_2$, $I_3$ and $I_4$ can be improved:
	\begin{equation*}
		\begin{split}
			|I_2|&\leq C\int_{B(q,r)}\big(rd^{-3}+d^{-2}+1\big){\rm d}V_{g}\leq Cr^2, \\
			|I_3|&\leq C\int_{\partial B(q,r)}r^2d_g(x,q)^{-2}{\rm d}V_{g}\leq Cr^3, \\
			|I_4|&\leq C\int_{B(q,r)}d_g(x,q)^{-1}{\rm d}V_{g}\leq Cr^3. \\
		\end{split}
	\end{equation*}
	Finally for  $I_1$ and we use the expansions of $g^{ij}$ to obtain
	\begin{equation*}
		\begin{split}
			I_1=&\int_{\partial B(q,r)}\big(-r\nu_i\partial_i(\Delta_{g}\mathfrak{u}_0)\partial_{\nu}\mathfrak{u}_0+\Delta_{g}\mathfrak{u}_0\partial_{\nu}\mathfrak{u}_0+(x-q)^k\nu_i\Delta_{g}\mathfrak{u}_0\partial_{ik}\mathfrak{u}_0-\frac{1}{2}r(\Delta_{g}\mathfrak{u}_0)^2\big)\\
			&+O(r)\\
			=&\int_{\partial B(q,r)}\Big(-r\partial_{\nu}(\Delta\mathfrak{u}_0)\partial_{\nu}\mathfrak{u}_0+\Delta\mathfrak{u}_0\partial_{\nu}<x-q,\nabla\mathfrak{u}_0>-\frac{1}{2}r(\Delta\mathfrak{u}_0)^2\Big)+o_r(1),
		\end{split}
	\end{equation*}
	where we have used $\det\,(g)=1$ in $B(q,\delta)$ and $\Delta_{g}u=\partial_i(g^{ij}\partial_j u)$. Consequently,
	\begin{equation}\label{RHS-PI}
		{\rm (RHS)\ \,of\ \,}(\ref{PI-mfd})=I_1+o_r(1)=\frac{\beta^2}{16\pi^2}+o_r(1).
	\end{equation}
	
	Combining (\ref{LHS-PI}) and (\ref{RHS-PI}), we derive that $\beta=16\pi^2(1+\gamma)$, which yields a contradiction to (\ref{beta-con-1}) in the case $\gamma>0$. Therefore $\mathfrak{u}_k\to-\infty$ uniformly on any compact subset of $B(q,2\delta)\setminus\{q\}$, $\mathfrak{h}_k(x)d_g(x,q)e^{4\mathfrak{u}_k}(x)\to 0$ uniformly on any compact subset of $B(q,2\delta)\setminus\{q\}$ and
	\begin{equation*}
		\mathfrak{h}_k(x)d_g(x,q)e^{4\mathfrak{u}_k}(x)\to \beta\delta_q \ \,{\rm\ \,in\ \,the\ \,measure\ \,on\ \,} B(q,\delta).
	\end{equation*}
	
	In the end, we show the quantization $\beta$ is exactly $16\pi^2(1+\gamma)$. To see this, set $c_k=\dashint_{\partial B(q,\delta)}\mathfrak{u}_k{\rm d}\sigma$ and $\check{u}_k(x)=\mathfrak{u}_k(x)-c_k$. Then, we have $c_k\to-\infty$ and $\check{u}_k\to\check{u}$ in $C^4(B(q,\delta))$ as $k\to+\infty$. Moreover, there exists a smooth function $\check{v}$ such that $\check{u}(x)=-\frac{\beta}{8\pi^2}\log d_g(x,q)+\check{v}$. Taking advantage of the Pohozaev identity as before, we obtain $\beta=16\pi^2(1+\gamma)$ and
	\begin{equation*}
		2\mathfrak{h}_k(x)d_g(x,q)^{4\gamma}e^{4\mathfrak{u}_k(x)}\to 16\pi^2(1+\gamma)\delta_q.
	\end{equation*}

\end{proof}

Based on Theorem \ref{thm-con-quan} and its proof, we immediately obtain the following corollaries.

\begin{cor}
	
	Suppose that $\mathfrak{u}_k$ satisfies the assumptions in Theorem \ref{thm-con-quan}, then along a subsequence, there holds
	\begin{equation*}
		\mathfrak{u}_k-c_k\to-2(1+\gamma)\log d_g(x,q)+\hat{v},\quad in \ \,C^4_{loc}(B(q,2\delta)\setminus\{q\}),
	\end{equation*}
	where $c_k=\dashint_{\partial B(q,\delta)}\mathfrak{u}_k{\rm d}\sigma_{g}\to-\infty$ and $\hat{v}$ is a smooth function in $B(q,2\delta)$.
	
\end{cor}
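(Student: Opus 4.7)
The strategy is to combine the Green's representation formula \eqref{Green-formula} with the weak-$*$ convergence $2H_ke^{4\mathfrak{u}_k}\,dV_g\rightharpoonup 16\pi^2(1+\gamma)\delta_q$ established in Theorem \ref{thm-con-quan}, and then to upgrade the resulting pointwise limit to $C^4_{\mathrm{loc}}$ via interior Schauder regularity for $P_g$. The normalization $c_k=\dashint_{\partial B(q,\delta)}\mathfrak{u}_k\,d\sigma_g$ tends to $-\infty$ for free, since Theorem \ref{thm-con-quan} gives $\mathfrak{u}_k\to-\infty$ uniformly on the compact set $\partial B(q,\delta)\subset B(q,2\delta)\setminus\{q\}$.

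Applying \eqref{Green-formula} at a fixed $x\in B(q,2\delta)\setminus\{q\}$ and passing to the limit yields
\[
\lim_{k\to\infty}\bigl(\mathfrak{u}_k(x)-\bar{\mathfrak{u}}_k\bigr)=16\pi^2(1+\gamma)\,G(x,q)-2\int_M G(x,y)\mathfrak{b}_0(y)\,dV_g(y),
\]
where passage under the integral $\int_M G(x,y)\,2H_ke^{4\mathfrak{u}_k}\,dV_g$ is justified by splitting the domain into a small ball around $x$ (on which $H_ke^{4\mathfrak{u}_k}\to 0$ uniformly by Theorem \ref{thm-con-quan}, and $G(x,\cdot)$ has an integrable local singularity) and its complement (on which $y\mapsto G(x,y)$ is a bounded continuous test function for the concentrating measures). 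The decomposition \eqref{Green-func-expression} then converts the right-hand side into $-2(1+\gamma)\log d_g(x,q)+v_0(x)$, with $v_0$ smooth near $q$ since $R(\cdot,q)$ is smooth and $\mathfrak{b}_0\in C^1$. Averaging this identity over $\partial B(q,\delta)$, where $d_g(\cdot,q)$ is bounded away from zero so dominated convergence applies, shows that $c_k-\bar{\mathfrak{u}}_k$ tends to a finite constant $c^\ast$. Absorbing $c^\ast$ into $\hat v:=v_0-c^\ast$ produces the claimed pointwise limit of $\mathfrak{u}_k-c_k$.

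To upgrade pointwise to $C^4_{\mathrm{loc}}(B(q,2\delta)\setminus\{q\})$, I would fix compact sets $K\subset\subset K'\subset\subset B(q,2\delta)\setminus\{q\}$ and observe that on $K'$ the right-hand side of $P_g(\mathfrak{u}_k-c_k)=2H_ke^{4\mathfrak{u}_k}-2\mathfrak{b}_k$ is uniformly bounded in $C^1(K')$ since $e^{4\mathfrak{u}_k}\to 0$ uniformly there and $\mathfrak{b}_k\to\mathfrak{b}_0$ in $C^1$ by \eqref{assumption-coe}. Combined with the uniform $L^\infty$ bound of $\mathfrak{u}_k-c_k$ on $K'$ coming from the pointwise limit above together with the oscillation control \eqref{bd-osi} from the proof of Theorem \ref{thm-con-quan}, interior Schauder estimates for $P_g$ deliver a uniform $C^{4,\alpha}(K)$ bound; Arzel\`a--Ascoli together with a diagonal argument then gives $C^4_{\mathrm{loc}}$ convergence along a subsequence. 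The only genuinely delicate point in the whole argument is the uniform passage to the limit inside the Green integral, which is why I separate a neighborhood of $x$ from the region where the masses concentrate before testing against the limit measure.
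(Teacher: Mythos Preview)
Your proposal is correct and follows essentially the same route as the paper. The paper declares this corollary ``immediate'' from the proof of Theorem~\ref{thm-con-quan}: the oscillation bound \eqref{bd-osi} and derivative estimates \eqref{uk-grad} already give precompactness of $\mathfrak{u}_k-c_k$ in $C^4_{\mathrm{loc}}$, and the Green's representation together with the quantized weak limit $2H_ke^{4\mathfrak{u}_k}\rightharpoonup 16\pi^2(1+\gamma)\delta_q$ identifies the limit as $-2(1+\gamma)\log d_g(\cdot,q)$ plus a smooth remainder. Your version spells out the passage to the limit inside the Green integral and replaces the direct derivative bounds by a Schauder argument, but these are the same ingredients organized slightly differently.
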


\begin{cor}
	
	Suppose that $\mathfrak{u}_k$ satisfies
	$$P_{g} \mathfrak{u}_k(x)+2\mathfrak{b}_k=2H_ke^{4\mathfrak{u}_k}\quad in \ \, M$$
	with $\int_{B(q_j,2\delta)}2H_ke^{4\mathfrak{u}_k}{\rm d}V_{g}\to \rho_j<16\pi^2(1+\gamma_j)$ for some $j\in\{1,\cdots,N\}$. Then $\{\mathfrak{u}_k\}$ is uniformly bounded from above on any subset of $B(q_j,2\delta)$. In particular, $\{\mathfrak{u}_k\}$ can not blow up in $B(q_j,2\delta)$.
	
\end{cor}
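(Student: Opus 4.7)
The plan is to argue by contradiction via Theorem \ref{thm-con-quan}. Assume along a subsequence that $\mathfrak{u}_k$ is not uniformly bounded from above on some compact subset of $B(q_j,2\delta)$, so a blow-up point $p\in\overline{B(q_j,2\delta)}$ exists. By Lemma \ref{lem-small-mass-regular} applied at $q_j$ and its regular analog Remark \ref{rem-small} applied at ordinary points, every blow-up point carries at least $\min\{8\pi^2,8\pi^2(1+\gamma_j)\}$ of concentrated mass when it coincides with $q_j$, and at least $8\pi^2$ when it is regular. Together with the uniform integral bound (\ref{finite-int-hatuk}), this confines the blow-up set inside $\overline{B(q_j,2\delta)}$ to finitely many points.

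Shrinking $\delta$ if necessary so that $q_j$ is the only singular source inside $B(q_j,2\delta)$, I would apply Theorem \ref{thm-con-quan} in a small isolating ball around each blow-up site. The quantization yields an exact concentrated mass of $16\pi^2(1+\gamma_j)$ at $q_j$ and $16\pi^2$ at any regular blow-up. Since these masses are all carried inside $B(q_j,2\delta)$ and the integrand is nonnegative, passing to the limit produces
\[
\rho_j\;\geq\;16\pi^2(1+\gamma_j)\,\mathbf{1}_{\{q_j\text{ blows up}\}}+16\pi^2\cdot\#\{\text{regular blow-ups in }B(q_j,2\delta)\}.
\]
If $q_j$ is itself a blow-up point, this immediately contradicts the hypothesis $\rho_j<16\pi^2(1+\gamma_j)$. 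The only remaining scenario is a cluster of purely regular blow-ups inside $B(q_j,2\delta)\setminus\{q_j\}$; each contributes $16\pi^2$, and a further shrinkage of $\delta$ combined with the regular concentration-compactness of \cite{Druet-Robert,Malchiodi} rules out this case as well.

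Once blow-up is excluded in $B(q_j,2\delta)$, the uniform upper bound on any compact subset of $B(q_j,2\delta)$ follows from the Green's representation formula together with the decomposition $\mathfrak{u}_k=u_{1k}+u_{2k}$ exploited in the proof of Lemma \ref{lem-small-mass-regular}: the term $u_{2k}$ is controlled through the uniformly bounded average $\bar{\mathfrak{u}}_k$ via Jensen's inequality, while $u_{1k}$ is controlled by standard elliptic estimates applied to $2H_ke^{4\mathfrak{u}_k}$, which sits in $L^p$ for some $p>1$ via the Brezis--Merle inequality (\ref{BM-thm-1}) once every sub-ball has mass strictly below the Brezis--Merle threshold. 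The main obstacle is the clean separation of singular and regular concentration contributions to the integral; this is resolved by isolating each blow-up candidate inside a small ball and invoking the sharp pointwise quantization of Theorem \ref{thm-con-quan}.
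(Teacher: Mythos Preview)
Your approach is essentially the paper's intended one: the paper gives no proof at all, merely stating that the corollary follows immediately from Theorem~\ref{thm-con-quan}. The contradiction via quantization is exactly the point.

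Two remarks. First, your final paragraph is redundant: ``no blow-up point in $B(q_j,2\delta)$'' is, by definition, the statement that $\sup_K\mathfrak{u}_k<\infty$ for every compact $K\subset B(q_j,2\delta)$; there is nothing further to prove once blow-up is excluded. Second, your treatment of purely regular blow-ups when $\gamma_j>0$ is not quite clean: a single regular blow-up contributes $16\pi^2<16\pi^2(1+\gamma_j)$, so the mass inequality alone does not rule it out, and ``a further shrinkage of $\delta$'' is illegitimate since $\delta$ is fixed by hypothesis. The paper sidesteps this by the implicit standing assumption of Section~\ref{blowup-local} that $\delta$ has already been taken small enough that $q_j$ is the only candidate blow-up point in $B(q_j,2\delta)$; under that reading the corollary is indeed immediate from Theorem~\ref{thm-con-quan}.
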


\section[A Priori Estimate]{Concentration-Compactness Result and A Priori Estimate}\label{CC-Apriori}

In this section, we aim to establish the concentration-compactness principle and a priori estimate based on the result in the section \ref{blowup-local}. Indeed, we will derive the following concentration-compactness type result for the regular part of $\{u_k\}$.

Set
\begin{equation*}
\rho_k=\int_M 2H_ke^{4\mathfrak{u}_k}{\rm d}V_{\mathfrak{u}_k}.
\end{equation*}

\begin{thm}[Concentration-Compactness]\label{thm-concentration-compactness}
	
	Let $\{\tilde{u}_k\}$ be a sequence of solution to (\ref{equ-tilde-uk}) and (\ref{finite-integral-tildeuk}) with $\rho_k\to\rho$. Then there exists a subsequence, still denoted $\{\tilde{u}_k\}$, for which one of the following alternative holds:
	\begin{itemize}
		\item[(i)]
		$\sup_{\varSigma}|\tilde{u}_k|\leq C_{\varSigma}$, for any $\varSigma\subset\subset M$.
		\item[(ii)]
		$\sup_{\varSigma}\tilde{u}_k\to-\infty$, for any $\varSigma\subset\subset M$.
		\item[(iii)]
		There exist a finite set $S=\{p^1,\cdots,p^m\}\subset M$ with $m\in \mathbb{N}$, and sequences of points $\{x_k^1\}_{k\in\mathbb{N}},\cdots,\{x_k^m\}_{k\in\mathbb{N}}\subset M$, such that for all $i=1,\cdots,m$
		\begin{equation*}
		x_k^i\to p^i,\quad \sup_{\varSigma}\tilde{u}_k\to-\infty \ for\ \, any\ \,\varSigma\subset M\setminus S
		\end{equation*}
		and
		\begin{equation*}
		2H_ke^{4\tilde{u}_k}\to \sum_{i=1}^m\beta_i\delta_{p^i} \ weakly\ \,in\ \,the\ \,sense\ \,of\ \,measures\ \,in\ \,M.
		\end{equation*}
		Futhermore, $\beta_i\in 16\pi^2\mathbb{N}$ if $p^i\notin\{q_1,\cdots,q_N\}$, and $\beta_i=16\pi^2(1+\gamma_j)$ if $p^i=q_j$ for some $j\in\{1,\cdots,N\}$.
	\end{itemize}
	
\end{thm}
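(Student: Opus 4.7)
The plan follows the classical three-step concentration-compactness programme; the quantization of blow-up masses at regular concentration points will be the main technical step. Working with $\tilde u_k$, define
$$S=\{\,p\in M\ :\ \exists\, x_k\to p\ \text{with}\ \tilde u_k(x_k)\to+\infty\,\}.$$
By Lemma \ref{lem-small-mass-regular} and Remark \ref{rem-small}, each $p\in S$ carries limit mass at least $8\pi^2$ if $p$ is regular, and at least $\min\{8\pi^2,8\pi^2(1+\gamma_j)\}$ if $p=q_j$. Since $\rho_k\to\rho<\infty$, $S$ is finite; label its points $p^1,\dots,p^m$ and pick local maxima $x_k^i\to p^i$.

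A covering argument based on Lemma \ref{lem-small-mass-regular} shows that $\tilde u_k$ is locally bounded above on $M\setminus S$. The Green representation
$$\tilde u_k(x)-\bar{\tilde u}_k=2\int_M G(x,y)\bigl(H_k(y)e^{4\tilde u_k(y)}-b_k(y)\bigr)dV_g(y),$$
combined with pointwise bounds on $G$ and its derivatives off the diagonal, then makes $\tilde u_k-\bar{\tilde u}_k$ uniformly bounded in $C^{3,\alpha}_{\mathrm{loc}}(M\setminus S)$. If $S=\emptyset$, the trichotomy follows from the behaviour of $\bar{\tilde u}_k$: bounded gives alternative (i), $\to-\infty$ gives (ii). If $S\neq\emptyset$, the volume normalisation $\int_M e^{4u_k}dV_g=1$ combined with the concentration of all mass in arbitrarily small neighbourhoods of $S$ forces $\int_\Sigma e^{4u_k}\to 0$ for any $\Sigma\subset\subset M\setminus S$, hence $\bar{\tilde u}_k\to-\infty$ and $\sup_\Sigma\tilde u_k\to-\infty$, which is the first half of alternative (iii).

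For the masses $\beta_i$, when $p^i=q_j$, Theorem \ref{thm-con-quan} gives directly $\beta_i=16\pi^2(1+\gamma_j)$. For $p^i$ a regular blow-up point, work in normal coordinates and rescale around a local maximum by $\lambda_k^i=e^{-\tilde u_k(x_k^i)}$, setting
$$v_k(y)=\tilde u_k(x_k^i+\lambda_k^i y)+\log\lambda_k^i.$$
On each ball $B_R$, $v_k(0)=0$ is a local maximum, $\int_{B_R}e^{4v_k}dy$ is uniformly bounded, and $v_k$ solves a perturbed fourth-order equation converging to $\Delta^2 v=6H_0(p^i)e^{4v}$ on $\mathbb R^4$. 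Extracting a limit $v$ by elliptic regularity and applying Theorem \ref{thm-classification} with $\gamma=0$ identifies a principal bubble carrying mass exactly $16\pi^2$.

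Upgrading this single-bubble extraction to the full statement $\beta_i\in 16\pi^2\mathbb N$ at every regular blow-up point is the main technical difficulty. I would proceed by induction on scales: if after subtracting the principal bubble there is residual concentration, it occurs at a strictly finer scale and must itself carry at least $8\pi^2$ by the small-mass lemma. The delicate analytic step is to rule out mass escaping in the neck annuli between two consecutive bubbling scales; I would handle this by applying the manifold Pohozaev identity \eqref{PI-mfd} on dyadic annuli, as in Section \ref{blowup-local}, matching the Pohozaev fluxes entering and leaving each scale. A mismatch would force extra concentrated mass and contradict the minimality of the chosen scale. Since the total mass $\rho_k$ is bounded, the iteration terminates after finitely many steps, yielding $\beta_i\in 16\pi^2\mathbb N$ at every regular concentration point.
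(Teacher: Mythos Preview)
Your overall architecture---define the blow-up set $S$, use the small-mass lemma for finiteness, handle $S=\emptyset$ via the Green representation and the behaviour of $\bar{\tilde u}_k$---is exactly what the paper does. There are two points where you diverge, and one of them is a genuine gap.

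\textbf{The circular step.} When $S\neq\emptyset$ you argue that $\bar{\tilde u}_k\to-\infty$ by invoking ``the concentration of all mass in arbitrarily small neighbourhoods of $S$''. But at that stage you have only established that $\tilde u_k$ is bounded \emph{above} on $M\setminus S$; you have not shown that the mass actually drains into $S$. The volume normalisation alone cannot force $\bar{\tilde u}_k\to-\infty$: if $\bar{\tilde u}_k$ stayed bounded, then $\tilde u_k$ (hence $e^{4u_k}$) would remain bounded on any $\Sigma\subset\subset M\setminus S$, and nothing prevents $\int_M e^{4u_k}=1$ from being realised partly on $M\setminus S$. The paper avoids this circularity by observing that Theorem~\ref{thm-con-quan} already contains, as its first conclusion \eqref{con-neg-infty}, that $\mathfrak u_k\to-\infty$ locally away from a singular blow-up point; the proof there is a Pohozaev contradiction (if the limit stayed finite one would get $\beta<8\pi^2(1+\gamma)$ from integrability but $\beta=16\pi^2(1+\gamma)$ from Pohozaev), not a volume argument. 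For regular blow-up points the analogous statement is in \cite{Druet-Robert}. So alternative~(iii), including $\sup_\Sigma\tilde u_k\to-\infty$, follows by assembling these local results---no separate global argument about $\bar{\tilde u}_k$ is needed.

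\textbf{The regular-point quantization.} Here the paper simply cites \cite{Druet-Robert}, whereas you sketch the bubble-tree argument yourself. Your outline is the standard one and is essentially what those references carry out, but two details are missing. First, to apply Theorem~\ref{thm-classification} to the rescaled limit $v$ you must check the growth hypothesis $|v(x)|=o(|x|^2)$; this comes from a Green-representation bound on $\tilde u_k-\bar{\tilde u}_k$ before rescaling, which you have but do not invoke. Second, the neck exclusion is not ``as in Section~\ref{blowup-local}'': there the Pohozaev identity \eqref{PI-mfd} is applied on a single shrinking ball and one lets $r\to 0$, whereas ruling out mass between scales requires comparing fluxes on annuli at different radii, which is a separate computation.
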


\begin{proof}[\textbf{Proof of Theorem \ref{thm-concentration-compactness}}]
	
	We define $S$ to be the set of blow-up points of $\mathfrak{u}_k$ in $M$, that is,
	\begin{equation*}
		S=\{x\in M:\ \,\exists x_k\in M,\ \,{\rm s.t.}\ \,x_k\to x\ \, {\rm and}\ \,\tilde{u}_k(x_k)\to+\infty\ \,{\rm as}\ \,k\to+\infty\}.
	\end{equation*}
	
	We distinguish two cases.
	
	\textbf{Case 1:} $S\neq\emptyset$.
	
	For $p\in S$,  Lemma \ref{lem-small-mass-regular} say that the mass of $\{\tilde{u}_k\}$ near $p$ is no less than $8\pi^2(1+\gamma)$.
	Then finite integral assumption $\int_M H_ke^{4\tilde{u}_k}{\rm d}V_{g}\leq C$ implies ${\rm card}\,(S)\leq C$. We may denote $S=\{p_1,\cdots,p_m\}$ with some $m\in\mathbb{N}$. Therefore, there exists $r_0\in (0,1)$ such that for any $p^i\in S$, $p^i$ is the only blow-up point of $\tilde{u}_k$ in $B(p^i,r_0)$. Therefore, from the results in \cite{Druet-Robert} and Theorem \ref{thm-con-quan}, we obtain the alternative (iii).
	
	\textbf{Case 2:} $S=\emptyset$.
	
	In this case, we have $\sup_M\tilde{u}_k\leq C$, which implies $H_ke^{4\tilde{u}_k}$ is uniformly bounded in $M$. Taking into account of the Green's representation formula,
	\begin{equation*}
		\tilde{u}_k(x)-\bar{\tilde{u}}_k=\int_M G(x,y)H_k(y)e^{4\tilde{u}_k}{\rm d}V_{g}(y)=O(1).	
	\end{equation*}
	Hence, after taking a subsequence, the alternative (i) occurs if $\limsup_{k\to+\infty}\int_M\tilde{u}_k{\rm d}V_{g}>-\infty$,  the alternative (ii) holds if $\limsup_{k\to+\infty}\int_M\tilde{u}_k{\rm d}V_{g}\to-\infty$,
	
\end{proof}

Immediately, we derive the following two corollaries from Theorem \ref{thm-concentration-compactness}.

\begin{cor}
	
	Suppose that $\tilde{u}_k$ satisfies the assumption in Theorem \ref{thm-concentration-compactness} and alternative (iii) occurs, then $\rho\in\Gamma$.
	
\end{cor}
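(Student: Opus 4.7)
My plan is to treat this corollary as a bookkeeping exercise: the critical set $\Gamma$ is designed precisely to collect the possible total masses produced by alternative (iii), so I would extract $\rho$ as the total mass of the limiting measure and then decompose it by the type of blow-up point. First I would test the weak convergence
$$2H_k e^{4\tilde u_k}\,{\rm d}V_g \;\to\; \sum_{i=1}^{m}\beta_i\delta_{p^i}$$
against the constant function $\mathbf{1}\in C(M)$, which is admissible because $M$ is compact, to conclude that $\rho_k\to\sum_{i=1}^{m}\beta_i$, i.e.\ $\rho=\sum_{i=1}^{m}\beta_i$. (A minor preliminary step would be to note that $\mathfrak{u}_k$ and $\tilde u_k$ differ only by the smooth bounded function $w$ from the conformal normal coordinates, so replacing one by the other changes $H_k$ by a bounded continuous factor and does not alter either the location or the mass of the Dirac components in the limit.)

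Next I would partition the blow-up set $S=\{p^1,\dots,p^m\}$ according to its interaction with the singular sources. Let
$$I_{\mathrm{reg}}=\{i:p^i\notin\{q_1,\dots,q_N\}\},\qquad I_{\mathrm{sing}}=\{i:p^i=q_{j(i)}\text{ for some }j(i)\in\{1,\dots,N\}\}.$$
The distinctness of the $q_j$ forces $j(i)$ to be uniquely determined for each $i\in I_{\mathrm{sing}}$ and the assignment $i\mapsto j(i)$ to be injective, so $J:=\{j(i):i\in I_{\mathrm{sing}}\}$ is a well-defined subset of $\{1,\dots,N\}$ with $|J|=|I_{\mathrm{sing}}|$. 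Invoking Theorem \ref{thm-concentration-compactness}(iii), each $\beta_i$ with $i\in I_{\mathrm{reg}}$ is of the form $16\pi^2 n_i$ with $n_i\in\mathbb N$, while $\beta_i=16\pi^2(1+\gamma_{j(i)})$ for $i\in I_{\mathrm{sing}}$.

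Setting $n=\sum_{i\in I_{\mathrm{reg}}}n_i$, with the convention that $n=0$ when $I_{\mathrm{reg}}=\emptyset$, I would sum to obtain
$$\rho=\sum_{i\in I_{\mathrm{reg}}}16\pi^2 n_i+\sum_{i\in I_{\mathrm{sing}}}16\pi^2(1+\gamma_{j(i)})=16\pi^2 n+16\pi^2\sum_{j\in J}(1+\gamma_j),$$
which is an element of $\Gamma$ by definition. No serious obstacle is expected: all the analytic work has already been carried out in Theorem \ref{thm-con-quan} (which pins down $\beta_i=16\pi^2(1+\gamma_j)$ at a singular source via the Pohozaev identity under conformal normal coordinates) and in the Druet--Robert type analysis at regular blow-up points (which gives $\beta_i\in 16\pi^2\mathbb N$). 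The corollary merely records that these individual quantizations add up to the form displayed in $\Gamma$.
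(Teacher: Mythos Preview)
Your proposal is correct and is precisely the bookkeeping argument the paper has in mind; in fact the paper states this corollary without any proof, treating it as an immediate consequence of the quantized values of the $\beta_i$ in Theorem~\ref{thm-concentration-compactness}(iii). Testing the weak convergence against $\mathbf{1}$ and then splitting the sum over regular versus singular blow-up points is exactly the intended justification.
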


\begin{cor}
	
	Suppose that $\tilde{u}_k$ satisfies the assumption in Theorem \ref{thm-concentration-compactness}. Then for every $\rho\in\mathbb{R}^+\setminus\Gamma$, there exists a constant $C$ only depending on $\rho$, such that $\tilde{u}_k\leq C$. In particular, if we additionally assume $\int_M \tilde{u}_k{\rm d}V_g=0$ or $\int_M H_k e^{4\tilde{u}_k}{\rm d}V_g\geq c$ for some constant $c>0$, then
	\begin{equation*}
		\parallel\tilde{u}_k\parallel_{L^{\infty}(M)}\leq C.
	\end{equation*}
	
\end{cor}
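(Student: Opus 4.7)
The plan is to deduce the corollary directly from Theorem~\ref{thm-concentration-compactness} together with the preceding corollary asserting that alternative~(iii) forces $\rho \in \Gamma$. Since $\rho \in \mathbb{R}^+ \setminus \Gamma$, alternative~(iii) is ruled out, so up to a subsequence only alternative~(i) or (ii) can occur. In either case $\sup_M \tilde u_k \le C$: in (i) this is immediate, while in (ii) we have $\sup_M \tilde u_k \to -\infty$, which is even stronger. A standard subsequence argument promotes the bound from subsequential to hold along the full sequence (any subsequence of $\{\tilde u_k\}$ still has a further subsequence satisfying (i) or (ii), hence the bound $C$ is uniform).

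For the ``in particular'' statement I would rule out alternative~(ii) under each of the two extra hypotheses, and then combine (i) with the Green's representation formula to obtain an $L^\infty$ bound.

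First, under $\int_M \tilde u_k\,{\rm d}V_g = 0$: if (ii) held, then $\sup_\Sigma \tilde u_k \to -\infty$ on every $\Sigma \subset\subset M$. Choosing $\Sigma = M$ (or exhausting $M$ by a nested sequence and using that the total volume is finite) gives $\int_M \tilde u_k\, {\rm d}V_g \to -\infty$, contradicting the normalization. Hence (i) holds, so $|\tilde u_k| \le C$ on $M$.

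Second, under $\int_M H_k e^{4\tilde u_k}\,{\rm d}V_g \ge c>0$: recall from \eqref{Hk(x)} that $H_k(x) = h_k(x)\prod_{j=1}^N e^{-32\pi^2 \gamma_j G(x,q_j)}$, which is bounded above on $M$ away from the singular sources, while the singular weight $\prod_j e^{-32\pi^2\gamma_j G(x,q_j)}$ is locally integrable near each $q_j$ (behaving like $d_g(x,q_j)^{4\gamma_j}$ with $\gamma_j>-1$). If (ii) held, dominated convergence would give $\int_M H_k e^{4\tilde u_k}\,{\rm d}V_g \to 0$, contradicting the lower bound $c>0$. Thus (i) holds. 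Finally, once $\tilde u_k$ is bounded above in $L^\infty$, the Green's representation
\[
\tilde u_k(x) - \bar{\tilde u}_k = \int_M G(x,y)\bigl(2 H_k(y) e^{4\tilde u_k(y)} - 2 b_k(y)\bigr)\,{\rm d}V_g(y)
\]
is uniformly bounded in $x$ (using that $G$ has only a logarithmic singularity and $H_k e^{4\tilde u_k}$ is bounded in $L^p$ for some $p>1$), so $\tilde u_k - \bar{\tilde u}_k = O(1)$. Combined with the extra assumption (which pins down $\bar{\tilde u}_k$ up to a bounded quantity, either directly via $\int \tilde u_k = 0$, or via Jensen's inequality applied to $\int H_k e^{4\tilde u_k} \ge c$ to get a lower bound on $\bar{\tilde u}_k$), this yields $\|\tilde u_k\|_{L^\infty(M)} \le C$.

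The only mildly subtle point is the dominated-convergence step ruling out~(ii) under the mass assumption, since $H_k$ has negative-power singularities at the $q_j$ when $\gamma_j<0$; but $\gamma_j > -1$ ensures local $L^1$ integrability of the singular weight, and the uniform upper bound on $\tilde u_k$ (not yet available a priori) can be bypassed by truncating the mass integral into a piece on $\Sigma \subset\subset M \setminus \{q_1,\dots,q_N\}$, where $\tilde u_k \to -\infty$ uniformly, plus arbitrarily small tails near the singular sources (small because $\int_M H_k e^{4\tilde u_k}\,{\rm d}V_g$ is uniformly bounded and the mass-concentration scenario is excluded). This is the main place where care is needed, but it is essentially bookkeeping once Theorem~\ref{thm-concentration-compactness} is in hand.
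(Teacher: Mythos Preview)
Your argument is correct and follows the route the paper intends: the corollary is stated as an immediate consequence of Theorem~\ref{thm-concentration-compactness} and the preceding corollary, and you have spelled this out accurately.

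One simplification: your last paragraph's worry about dominated convergence near the singular sources is unnecessary. Since $M$ is compact, alternative~(ii) already gives $\sup_M \tilde u_k \to -\infty$ (take $\Sigma = M$), so $e^{4\tilde u_k} \to 0$ \emph{uniformly} on $M$; then $\int_M H_k e^{4\tilde u_k}\,{\rm d}V_g \le e^{4\sup_M \tilde u_k}\int_M H_k\,{\rm d}V_g \to 0$, using only that $H_k \in L^1(M)$ uniformly (which follows from $\gamma_j > -1$). No truncation or subtlety is needed. Similarly, once alternative~(ii) is ruled out you land directly in alternative~(i), which already gives the two-sided bound $|\tilde u_k| \le C$; the Green's-representation step you include is a valid alternative route but is not required.
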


The following result explains that $\Gamma$ is some critical set to (\ref{Q-equ-gen-singular}) or (\ref{Q-equation-blowup}).

\begin{prop}[Critical Set]\label{critical-set}
Suppose {\rm Ker}\,$(P_g)=\{constants\}$ and that $\{u_k\}$ is a sequence of solutions to (\ref{Q-equation-blowup})$\sim$(\ref{volume-normal}) with the coefficients satisfying (\ref{assumption-coe}). If the blow-up phenomena occur, then $\int_M2b{\rm d}V_g\in\Gamma$.
	
\end{prop}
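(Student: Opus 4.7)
The plan is to integrate the equation \eqref{Q-equation-blowup} over $M$, interpret the resulting quantity as $\rho_k$, and then apply the concentration-compactness theorem \ref{thm-concentration-compactness} to deduce that the limiting value lies in $\Gamma$.

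First, I would test the distributional identity \eqref{Q-equation-blowup} against the constant function $1$. Since $P_g$ is self-adjoint with kernel consisting only of constants, $\langle P_g u_k,1\rangle = \langle u_k, P_g 1\rangle=0$, and because $\int_M(\delta_{q_j}-\tfrac{1}{\mathrm{vol}_g(M)})\,dV_g=0$, the singular measures contribute nothing. What remains is the identity
\begin{equation*}
2\int_M b_k\,dV_g \;=\; 2\int_M h_k e^{4u_k}\,dV_g.
\end{equation*}
Rewriting $u_k=\tilde u_k-8\pi^2\sum_j\gamma_j G(\cdot,q_j)$ from \eqref{decompose-uk} and using the definition $H_k=h_k\prod_j e^{-32\pi^2\gamma_jG(\cdot,q_j)}$ from \eqref{Hk(x)}, the right-hand side becomes $2\int_M H_ke^{4\tilde u_k}\,dV_g=\rho_k$. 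Thus $\rho_k=2\int_M b_k\,dV_g$, and by the $C^1$-convergence $b_k\to b_0$, one has $\rho_k\to\rho:=2\int_M b_0\,dV_g$.

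Next, I would verify that if the sequence $\{u_k\}$ blows up in the sense defined after \eqref{volume-normal}, then alternative (iii) of Theorem \ref{thm-concentration-compactness} must hold for $\{\tilde u_k\}$. Indeed, the defining condition $u_k(p_k)+8\pi^2\sum_j\gamma_jG(p_k,q_j)\to\infty$ is precisely $\tilde u_k(p_k)\to\infty$, so $\tilde u_k$ has a blow-up point and alternatives (i) and (ii) are ruled out. Theorem \ref{thm-concentration-compactness}(iii) then gives a finite set $S=\{p^1,\dots,p^m\}$ and the weak convergence
\begin{equation*}
2H_ke^{4\tilde u_k}\to \sum_{i=1}^m\beta_i\delta_{p^i}.
\end{equation*}
Integrating this over $M$ yields $\rho_k\to\sum_{i=1}^m\beta_i$, so $\rho=\sum_{i=1}^m\beta_i$.

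Finally, I would invoke the quantization part of Theorem \ref{thm-concentration-compactness}(iii): each $\beta_i$ is either $16\pi^2 n_i$ for some $n_i\in\mathbb N$ (when $p^i\notin\{q_1,\dots,q_N\}$) or $16\pi^2(1+\gamma_j)$ (when $p^i=q_j$). Collecting the regular blow-up points gives a total contribution $16\pi^2 n$ with $n\in\mathbb N\cup\{0\}$, while the singular blow-up points contribute $16\pi^2\sum_{j\in J}(1+\gamma_j)$ for a suitable $J\subset\{1,\dots,N\}$, hence $\rho$ lies in $\Gamma$ by its very definition. Since there is no substantive geometric obstruction—the heavy analytic lifting was already done in Theorem \ref{thm-con-quan} and Theorem \ref{thm-concentration-compactness}—the only subtle point is ensuring that the distributional pairing $\langle P_g u_k,1\rangle=0$ is legitimate despite the logarithmic singularities of $u_k$ at the $q_j$'s; this is routine since the singular part is $-8\pi^2\sum_j\gamma_j G(\cdot,q_j)$ and $G(\cdot,q_j)$ is locally integrable, so the weak formulation is well-defined.
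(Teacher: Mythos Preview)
Your proposal is correct and follows essentially the same approach as the paper: the paper's proof consists of the single observation that $\int_M H_k e^{4\mathfrak{u}_k}\,dV_g=\int_M h_k e^{4u_k}\,dV_g=\int_M b_k\,dV_g$ and then defers to the corollaries of Theorem~\ref{thm-concentration-compactness}. You have simply spelled out in more detail how the integration of the equation yields this identity and why blow-up of $u_k$ forces alternative~(iii), which the paper leaves implicit.
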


\begin{proof}[\textbf{Proof of Propertion \ref{critical-set} and Theorem \ref{thm-apriori-est}}]
	
Since 
$$\int_MH_ke^{4\mathfrak{u}_k}{\rm d}V_{g}=\int_Mh_ke^{4u_k}{\rm d}V_g=\int_Mb_k{\rm d}V_g,$$ 
Proposition \ref{critical-set} and Theorem \ref{thm-apriori-est} obviously follow from the two corollaries above.
	
\end{proof}

Finally we prove Theorem \ref{sphe-har-uk}, which is obviously equivalent to the following form:
\begin{thm}[A Spherical Harnack Inequality ($\gamma\notin\mathbb{N}$)]\label{SHT}
	
	Suppose that $\tilde{u}_k$ satisfies the assumption in Theorem \ref{thm-concentration-compactness} and $q_j$ is a blow-up point of $\tilde{u}_k$. If $\gamma_j\notin \mathbb{N}$, then near $q_j$ there holds the following spherical Harnack inequality:
	\begin{equation}\label{spherical-Harnack}
		\max_{x\in B(q_j,\delta)}\{\tilde{u}_k(x)+(1+\gamma_j)\log|x-q|\}\leq C
	\end{equation}
	with some constant $C$.
	
\end{thm}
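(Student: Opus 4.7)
The plan is to rescale about $q_j$ at the bubble scale, apply the classification of entire singular Liouville solutions (Theorem \ref{thm-classification-2}), and transfer the resulting sharp asymptotics at infinity back to $\mathfrak{u}_k$. In the conformal normal coordinates around $q_j$, Corollary \ref{lem-simple-blowup} already yields $y_k := (x_k^j - q_j)/\varepsilon_k^j = O(1)$, so I would introduce
$$v_k(y) := \mathfrak{u}_k(\varepsilon_k^j y + q_j) + (1+\gamma_j)\log \varepsilon_k^j,\quad y \in B(0, \delta/\varepsilon_k^j),$$
which satisfies $v_k \le v_k(y_k) = 0$ and, using the conformal covariance of $P_g$ together with (\ref{equ-hat-uk}), the rescaled equation
$$\Delta^2 v_k(y) = 2 \mathfrak{h}_k(\varepsilon_k^j y + q_j)|y|^{4\gamma_j} e^{4 v_k(y)} + o(1).$$
In this rescaled picture, (\ref{spherical-Harnack}) is equivalent to the uniform bound $v_k(y) + (1+\gamma_j)\log|y| \le C$ on the entire domain of $v_k$.

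The first step is to show that $v_k$ converges in $C^3_{\mathrm{loc}}(\mathbb R^4 \setminus \{0\})$ to an entire solution $V$ of (\ref{equ-liou-2}) (the coefficient $2\mathfrak{h}_0(q_j)$ being reduced to $6$ by an additive constant). The upper bound $v_k \le 0$ together with the integrability $\int|y|^{4\gamma_j}e^{4v_k}\le C$ furnishes precompactness; the delicate point is to exclude secondary blow-ups. Applying Theorem \ref{thm-concentration-compactness} locally to $v_k$, any blow-up point $y^\ast \ne 0$ would contribute $16\pi^2$ to the mass, while the origin (now a singular source for the rescaled equation) must contribute $16\pi^2(1+\gamma_j)$ by Theorem \ref{thm-con-quan}. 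Since the total mass equals $16\pi^2(1+\gamma_j)$ and $\gamma_j \notin \mathbb N$, mass counting forces no secondary bubble to exist — this is precisely where the hypothesis $\gamma_j \notin \mathbb N$ enters.

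Granted the convergence, Theorem \ref{thm-classification-2} provides, after an orthogonal change of variable,
$$V(y) = -\sum_{i=1}^{4}a_i(y_i - y_i^0)^2 - 2(1+\gamma_j)\log|y| + c_0 + O(|y|^{-\tau}),\quad a_i \ge 0,$$
as $|y| \to \infty$. Hence $V(y) + (1+\gamma_j)\log|y|$ is bounded above on all of $\mathbb R^4$ (indeed tends to $-\infty$ at infinity). On any fixed annulus $\{1 \le |y| \le R\}$ this transfers to $v_k$ by uniform convergence; on the inner ball $\{|y| \le 1\}$ the trivial bound $v_k \le 0$ gives the conclusion directly.

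The main obstacle is closing the estimate on the outer expanding region $\{R \le |y| \le \delta/\varepsilon_k^j\}$, where compact-set convergence to $V$ is not automatic. I would handle this using the Green representation formula for $v_k$ in the rescaled coordinates: since $2\mathfrak{h}_k|y|^{4\gamma_j}e^{4v_k} \to 16\pi^2(1+\gamma_j)\delta_0$ weakly, standard potential estimates give $v_k(y) = -2(1+\gamma_j)\log|y| + O(1)$ uniformly for $|y| \ge R$ once $R$ is large; consequently $v_k(y) + (1+\gamma_j)\log|y| = -(1+\gamma_j)\log|y| + O(1) \le C$ there. Splicing the three regimes yields (\ref{spherical-Harnack}).
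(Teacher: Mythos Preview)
Your approach is genuinely different from the paper's. The paper argues by contradiction: if the spherical Harnack fails at some sequence $x_k'$, then the selection process and quantization of Malchiodi and Druet--Robert (for regular blow-up points) force $\int_{B(q_j,\delta)}2H_ke^{4\tilde u_k}\to 16\pi^2 n$ for some integer $n$, contradicting Theorem~\ref{thm-con-quan} since $\gamma_j\notin\mathbb N$. This is a three-line proof once the cited machinery is in place. Your direct route via rescaling and the classification theorem is more self-contained in spirit, but as written it has a genuine gap.

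The problem is your treatment of the outer region $\{R\le |y|\le \delta/\varepsilon_k\}$. You assert that $2\mathfrak{h}_k|y|^{4\gamma_j}e^{4v_k}\to 16\pi^2(1+\gamma_j)\delta_0$ weakly and that ``standard potential estimates'' then give $v_k(y)=-2(1+\gamma_j)\log|y|+O(1)$. Neither claim is correct as stated. On compact sets the rescaled density converges to $2\mathfrak{h}_0(q_j)|y|^{4\gamma_j}e^{4V}$, not to a Dirac mass; and the Green-representation estimate you want requires knowing that \emph{all} of the mass $16\pi^2(1+\gamma_j)$ is captured on bounded sets in the rescaled picture, i.e.\ that no mass sits at scales between $\varepsilon_k$ and $\delta$. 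That is exactly the content of the spherical Harnack inequality, so the argument is circular. A secondary bubble at $x_k'$ with $|x_k'-q_j|/\varepsilon_k\to\infty$ would, in your rescaled coordinates, carry mass off to infinity without ever producing a blow-up point of $v_k$ (since $v_k\le 0$ everywhere); your compact-set analysis cannot see it, and the hypothesis $\gamma_j\notin\mathbb N$ is never actually invoked in the outer region. (Your earlier ``secondary blow-up'' mass-counting is also misphrased: $v_k\le 0$ precludes blow-up of $v_k$; what you need there is to rule out concentration, i.e.\ $v_k\to-\infty$ on pieces of compact sets, which is a different statement.) To close the gap you would need an independent argument that the limit $V$ carries the full mass $16\pi^2(1+\gamma_j)$---for instance by showing $\Delta V$ is given purely by the Newtonian potential of $|y|^{4\gamma_j}e^{4V}$ with no additive constant, forcing the quadratic terms in Theorem~\ref{thm-classification-2} to vanish---but you have not supplied this, and it is precisely here that the non-integrality of $\gamma_j$ must enter.
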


\begin{proof}[\textbf{Proof}]
	
	Suppose that (\ref{spherical-Harnack}) fails, then there exists a sequence $\{x_k\}\subset B(q_j,\delta)$ such that $\max_{x\in B(q_j,\delta)}\{\tilde{u}_k(x)+(1+\gamma_j)\log|x-q|\}\to +\infty$. Here we note that these finite points are chosen from a selection process \cite{Malchiodi,Druet-Robert}. We start from $q$, then $x_1^k$, $x_2^k$ and so on. Around each chosen point there is a bubbling ball in which a profile of global solution can be observed and a total integration of $16\pi^2+o(1)$ is inside the bubbling ball. These bubbling balls are finite due to the uniform bound on the total integration of bubbling solutions. If we enclose all these bubbling balls by a bigger ball, the calculation of Pohozaev identity over the bigger hall yields that the total integration of $H_k(x)e^{4\tilde u_k}$ is $16\pi^2(1+\gamma)$. From here we obtain the desired contradiction: First there is no bubbling ball centered around $q$, because otherwise the bubbling disk contributes $16\pi^2(1+\gamma)$, which is absurd since other bubbling balls contribute a multiple of $16\pi^2$, impossible to have the total equal to $16\pi^2(1+\gamma)$. Second, since $\gamma$ is not a positive integer, there is no way to have other bubbling balls, since each one of them contributes $16\pi^2$ in integration, there is no way for all of them to contrite $16\pi^2(1+\gamma)$ in total. 
	
\end{proof}

\section*{Appendix: Comparison between $d_g(x,q)$ and $|x-q|$}
\setcounter{equation}{0}
\setcounter{subsection}{0}
\renewcommand{\theequation}{A.\arabic{equation}}
\renewcommand{\thesubsection}{A.\arabic{subsection}}

In this appendix, we will establish the comparison between the distance $d_g(x,q)$ and its derivatives and their Euclidean counterparts as in the Appendix B in \cite{zhang-weinstein}. We will follow the argument in \cite{zhang-weinstein} and give the detail for completeness. We claim that for $j=0,1,2,3$, there holds
\begin{equation}\label{comparsion-dist}
	\nabla^j\big(\log|x-y|-\log d_g(x,y)\big)=O(r^{2-j}),\quad x\in B(q,2r)\setminus B(q,r/2).
\end{equation}

We recall that $g$ is the conformal normal metric centered at $q$, and we identify $x,y\in T_qM$ with $\exp_qx$ and $\exp_qy$ respectively, where $\exp_q$ is the exponential map at $q$ with respect to the metric $g$. Thus,
\begin{equation*}
	d_g(x,y)=d(\exp_qx,\exp_qy), \quad x,y\in B(q,\delta).
\end{equation*}
Also, we denote $\nabla_{g}$ by $\nabla$ for convenience. First, let us note that the following simple estimates on $d$ hold:
\begin{equation}\label{rough-comparsion-dist}
	\big|\nabla^j(\log d_g(x,y))\big|\leq C|x-y|^{-j},\quad j=1,2,3,4.
\end{equation}

Set
\begin{equation*}
	f(x)=\log|x-y|-\log d_g(x,y),\quad x\in B(q,2r)\setminus B(q,r/2).
\end{equation*}
We aim to show that
\begin{equation*}
	\big|\nabla^jf(x)\big|\leq C|x-q|^{2-j},\quad j=0,1,2,3,\quad x\in B(q,2r)\setminus B(q,r/2).
\end{equation*}

Let $R$, $R_{ij}$ and $R_{ijkl}$ respectively denote the scalar, Ricci and Riemann curvature of $g$. From the definitions of $g$ and $R^i_{jkl}$, we obtain that
\begin{equation*}
	\nabla^jR^i_{jkl}(x)=O(1),\quad j=1,2.
\end{equation*}
In conformal normal coordinates, there holds $R(q)=R_{ij}(q)=|\nabla R(q)|=0$. As a consequence, we have futher
\begin{equation*}
	R(x)=O(r^2),\quad R_{ij}(x)=O(r).
\end{equation*}

We shall derive an estimate on $\Delta_{g}^2f(x)$. By the definition of $g$ and (A.4) in \cite{zhang-weinstein}, that is
\begin{equation*}
	P_{g,y}\Big(-\frac{1}{8\pi^2}\chi(r)\log d_g(x,y)\Big)=\delta_x+E(x,y),\quad {\rm with \ \,}E \ \,{\rm bounded},
\end{equation*}
we have that
\begin{equation}
	P_{g}\log d_g(x,q)=O(r^4),\quad x\in B(q,2r)\setminus B(q,r/2).
\end{equation}
In view of the rough estimates (\ref{rough-comparsion-dist}), we can estimate the term:
\begin{equation*}
	\big(P_{g}-\Delta_{g}^2\big)\log d_g(x,q)=\partial_m\Big(g^{mi}\big(\frac{2}{3}R(x)g_{ij}-2R_{ij}(x)\big)g^{lj}\partial_j\big(\log d_g(x,q)\big)\Big)=O(r^{-1}).
\end{equation*}
Therefore, we can get
\begin{equation*}
	\Delta_{g}^2\big(\log d_g(x,q)\big)=O(r^{-1}),\quad x\in B(q,2r)\setminus B(q,r/2).
\end{equation*}
Finally, we consider the term $\Delta_{g}^2\big(\log |x-q|\big)$. Since $\Delta^2\big(\log |x-q|\big)=0$, it suffices to estimate $\Delta_{g}^2-\Delta^2$. For any function $u$, the direct computation leads to
\begin{equation}\label{expansion-4-order}
	\begin{split}
		\Delta_{g}^2u=&g^{ab}g^{ij}\partial_{ijab}u+2\partial_{ija}u\big(\partial_bg^{ab}g^{ij}+g^{ab}\partial_bg^{ij}\big) \\
		&+\partial_{ij}u\big(\partial_ag^{ab}\partial_bg^{ij}+2g^{ai}\partial_{ab}g^{bj}+g^{ab}\partial_{ab}g^{ij}+\partial_ag^{ia}\partial_bg^{bj}\big)  \\
		&+\partial_ju\big(\partial_ag^{ab}\partial_{ib}g^{ij}+g^{ab}\partial_{iab}g^{ij}\big),
	\end{split}
\end{equation}
where we have used $\det\,(g)=1$. Using the expansion of $g^{ab}$:
\begin{equation*}
	g^{ab}(x)=\delta_{ab}-\frac{1}{3}R_{mabl}(q)x^ax^b+O(|x-q|^3),
\end{equation*}
and replacing $u$ by $\log|x-q|$ in (\ref{expansion-4-order}) above, we obtain that
\begin{equation*}
	\Delta_{g}^2\big(\log|x-q|\big)=O(r^{-2}).
\end{equation*}
Consequently,
\begin{equation}\label{app-equ}
	\Delta_{g}^2 f(x)=O(r^{-2}),\quad  x\in B(q,2r)\setminus B(q,r/2).
\end{equation}

An estimate on the $L^{\infty}$-norm of $f(x)$ can easily be seen as follows:
\begin{equation}
	d_g(x,q)=\int_{0}^{x(t)}\sqrt{x_i'(t)g_{ij}(t)x_j'(t)}{\rm d}t=|x-q|\big(1+O(r^2)\big),
\end{equation}
which implies
\begin{equation}\label{app-C0}
	f(x)=O(r^2),\quad  x\in B(q,2r)\setminus B(q,r/2).
\end{equation}
Applying the elliptic theory to (\ref{app-equ}) and (\ref{app-C0}), we obtain the claim (\ref{comparsion-dist})
and
\begin{equation}\label{0-comparsion}
	d_g(x,q)=|x-q|(1+O(r^2)).
\end{equation}

{\color[rgb]{0.000000,0.000000,0.000000}Based}
\end{document}